 \theoremstyle{plain}
\newtheorem{thm}{Theorem}[section]
  \theoremstyle{definition}
  \newtheorem{defn}[thm]{Definition}
  \theoremstyle{plain}
  \newtheorem{lem}[thm]{Lemma}
  \theoremstyle{remark}
  \newtheorem*{rem*}{Remark}
  \theoremstyle{plain}
  \newtheorem{cor}[thm]{Corollary}
  \theoremstyle{plain}
\begin{document}

\title{TRAVELING WAVE SOLUTIONS FOR LOTKA-VOLTERRA SYSTEM RE-VISITED}

\maketitle
\renewcommand{\theequation}{\arabic{section}.\arabic{equation}}

\begin{center}
ANTHONY W LEUNG$^{a,}$%
\footnote{Contacting author

\medskip{}

\noindent \textit{2000 Mathematics Subject Classification.} Primary
35B35, Secondary 35K57, 35B40, 35P15

\noindent \textit{Key words and phrases}. Traveling Wave, Existence,
Asymptotics, Uniqueness, Spectrum, Stability%
}, XIAOJIE HOU$^{b}$, \textsc{WEI FENG$^{b}$}
\par\end{center}

\medskip{}

{\footnotesize \centerline{$^{a}$Department of Mathematical Sciences,}
\centerline{ University of Cincinnati,} \centerline{ Cincinnati,
OH 45221 }}{\footnotesize \par}

\smallskip{}

{\footnotesize \centerline{$^{b}$Department of Mathematics and Statistics,}
\centerline{ University of North Carolina at Wilmington,} \centerline{
Wilmington, NC 28403 }}{\footnotesize \par}

\begin{abstract}
Using a new method of monotone iteration of a pair of smooth lower-
and upper-solutions, the traveling wave solutions of the classical
Lotka-Volterra system are shown to exist for a family of wave speeds.
Such constructed upper and lower solution pair enables us to derive
the explicit value of the minimal (critical) wave speed as well as
the asymptotic rates of the wave solutions at infinities. Furthermore,
the traveling wave corresponding to each wave speed is unique modulo
a translation of the origin. The stability of the traveling wave solutions
with non-critical wave speed is also studied by spectral analysis
of the linearized operator in exponentially weighted Banach spaces.
\end{abstract}
\medskip{}

\section{\textbf{INTRODUCTION }}

\setcounter{equation}{0} 

We re-visit the classical Lotka-Volterra competition system:

\begin{equation}
\left\{ \begin{array}{l}
u_{t}=d_{1}u_{xx}+u(a_{1}-b_{1}u-c_{1}v),\\
\\v_{t}=d_{2}v_{xx}+v(a_{2}-b_{2}u-c_{2}v)\end{array}\quad\quad(x,t)\in\mathbb{R\times\mathbb{R}}^{+},\right.\label{eq:1.1}\end{equation}
where $u(x,t)$, $v(x,t)$ are the population densities of  two competing
species, the constants $d_{i}$, $a_{i}$, $b_{i}$, $c_{i}$, $i=1,2$
are assumed to be positive. In this paper, we are trying to acomplish
the following goals: providing a new and easy construction of upper-
and lower-solutions to derive the traveling wave solutions of \eqref{eq:1.1};
obtaining an accurate description of the minimal wave speed and asymptotic
behaviors (up to the first order) of the wave solutions; investigating
the stability of the traveling wave solutions in various Banach spaces.

System \eqref{eq:1.1} has been extensively studied. In \cite{Leung}
and \cite{Pao}, there are many applications and treatments of solutions
of \eqref{eq:1.1} in bounded spatial domain under various initial
and boundary conditions. As is well known, system \eqref{eq:1.1}
and its cooperative counter-parts admit traveling wave solutions.
In \cite{TangFife}, Tang and Fife showed the existence of the traveling
wave solutions connecting the extinction state with the co-existent
state, In \cite{KanelZhou}, Kanel and Zhou studied the existence
of the traveling wave solutions connecting the coexistent state to
a semi-exitinction state. In \cite{Fei}, the traveling wave solution
connectiong two semi-extinction states were studied, they also estimated
the minimal wave speed. For the other treatment of the traveling wave
solutions of system \eqref{eq:1.1} and its generalizations, please
see \cite{WuZou}, \cite{AlexanderGardnerJones}, \cite{Hosono},
\cite{Volpert}.

Throughout this paper, we make the following assumptions:

\begin{itemize}
\item {[}H1]. ${\displaystyle \frac{a_{2}}{b_{2}}<\frac{a_{1}}{b_{1}},}$ 
\item {[}H2]. ${\displaystyle \frac{a_{2}}{c_{2}}<\frac{a_{1}}{c_{1}}}$,
\item {[}H3]. ${\displaystyle \frac{b_{2}}{b_{1}}+\frac{c_{1}a_{2}}{c_{2}a_{1}}\le1+\frac{a_{2}}{a_{1}}.}$
\end{itemize}
Under these conditions system \eqref{eq:1.1} has three equilibria
$(0,0)$, $(\frac{a_{1}}{b_{1}},0)$ and $(0,\frac{a_{2}}{b_{2}}$).

We will use a new monotone iteration method to investigate the traveling
wave solutions of \eqref{eq:1.1}. The traveling wave solution connects
two of the above equilibria. The monotone iteration method has been
widely used in the study of the traveling wave solutions of reaction
diffusion system such as \eqref{eq:1.1}, but most constructed upper-
and lower-solutions in literature are non-smooth. The relatively larger 'gap' between the non-smooth upper and lower-solutions
creates certain difficulties in deriving the accurate asymptotic estimates
of the traveling wave solutions at infinities. Such estimates are
valuable in applications, and enable one fully exploit the cooperative or competitive structure of the Lotka Volterra system.

The smooth upper-solution in the monotone iteration as in section
3 is derived from the traveling wave solutions of the KPP equation.
Observing that if we take fuction $v$ to be a constant, then the
first equation of \eqref{eq:1.1} is a generalized  KPP equation, the same consideration
is also true for the second equation. The existence, uniqueness, asymptotics
as well as the stability of the traveling wave solutions of the KPP
system are well known, so are the properties of the upper-solution. The most difficult part in section 3 is to
construct the lower solution for system \eqref{eq:1.1}. Since such
constructed upper solution is 'nearly' a solution, a compromise is
made to derive the smooth lower-solution. In fact, the lower-solution
does not satisfy the boundary condition at $\infty$. Thanks to the
realxed condition in \cite{WuZou}, we can still apply the monotone
iteration scheme as specified in \cite{WuZou}, \cite{BoumenirNguyen}.
The trade off of such 'shorter' lower solution is that we can have
some freedom to choose the lower solution with desired asymptotic
rate at negative infinity. This leads to an accurate (up to first order)
asymptotic estimates of the traveling wave solution at $-\infty$,
and an exact value of the minimal wave speed.

The asymptotics of the traveling wave solutions at infinities are
obtained by comparison principle. Once the asymptotic behaviors of
the traveling wave solutions are known, we can use the Maximum principle
and Sliding domain method \cite{LeungHouLi}to derive the uniqueness,
strict monotonicity as well as the local stability of the traveling
wave solutions.

The local stability of the traveling wave solutions is studied by
means of spectral analysis in some weighted Banach spaces. We proceed
to show that the linearized operator about the traveling wave solution
has essential spectrum lying completely in the left complex plane,
and that $0$ is not an eigenvalue of the linearized operator in the
weighted Banach spaces, whereas all the other eigenvalues of the linearized
operator have negative real parts.
This means the traveling wave solution is linearly exponentially stable.
Since such linear operator is sectorial, the linear stability implies
the local nonlinear stability of the traveling wave solutions \cite{AlexanderGardnerJones},
\cite{Volpert}, \cite{Henry}. Though general theories on the stability
of the traveling wave solutions are known \cite{Volpert}, \cite{Kapitula},
\cite{AlexanderGardnerJones}, the verification of the conditions
there is in fact a case by case study. The methods used in the stability
study of the traveling wave solutions of \eqref{eq:1.1} are similar
in the spirit to those in \cite{XuZhao}, \cite{Sattinger}, \cite{Kapitula},
\cite{AlexanderGardnerJones}, \cite{WuXingYe}, \cite{WuLi}, \cite{BatesChen},
but in comparing to the above methods, we need to further overcome
the 'unstable' component of the system. This is done by studying an
equivalent form of the linearized operator in a  smaller weighted
Banach spaces. 

We remark that the stability is only for the traveling waves with
non-critical wave speed. The stability of the traveling waves with
critical wave speed is currently under investigation.

The paper is arranged as follows: in section 2, we study the steady
states of the system \eqref{eq:1.1} and obtain the attraction region
of the stable steady state; in section 3, we show there are traveling
wave solutions connecting the one of the unstable steady states with
a stable one, corresponding to each wave speed, the traveling wave
solution is unique and strictly monotone. The analysis is done by utilizing 
an accurate description of asymptotic behavior of the traveling wave
solutions. Furthermore, we have obtained the estimate of the critical
wave speed. In the last section of the paper, we show the traveling
wave solutions are locally, nonlinearly exponentially stable.

\section{\textbf{THE EQUILIBRIA AND THEIR STABILITY}\label{sec:2}\setcounter{equation}{0}}

In this section, we analyze the constant steady states of system (\ref{eq:1.1})
under conditions H1-H3. 

Consider system \eqref{eq:1.1} with the initial conditions \begin{equation}
u(x,0)=u_{0}(x),\quad v(x,0)=v_{0}(x)\quad x\in\mathbb{R}.\label{eq:2.1}\end{equation}
where $u_{0}$ and $v_{0}$ are non-negative bounded smooth functions
on $\mathbb{R}$. 

\begin{defn}
\label{def:2.1}A pair of nonnegative bounded smooth functions $\tilde{U}=(\tilde{u},\tilde{v})$
and $\hat{U}=(\hat{u},\hat{v})$ are called coupled upper- and lower-solutions
of the Cauchy problem \eqref{eq:1.1}-\eqref{eq:2.1} if $\tilde{U}\geq\hat{U}$
on $\mathbb{R\times}[0,\infty)$ and the following inequalities are
satisfied\begin{equation}
\begin{array}{ccl}
{\displaystyle \tilde{u}_{t}} & \geq & d_{1}\tilde{u}_{xx}+\tilde{u}(a_{1}-b_{1}\tilde{u}-c_{1}\hat{v}),\\
\\{\displaystyle \tilde{v}_{t}} & \geq & d_{2}\tilde{v}_{xx}+\tilde{v}(a_{2}-b_{2}\hat{u}-c_{2}\tilde{v});\\
\\{\displaystyle \hat{u}_{t}} & \leq & d_{1}\hat{u}_{xx}+\hat{u}(a_{1}-b_{1}\hat{u}-c_{1}\tilde{v}),\\
\\{\displaystyle \hat{v}_{t}} & \leq & d_{2}\hat{v}_{xx}+\hat{v}(a_{2}-b_{2}\tilde{u}-c_{2}\hat{v}),\end{array}\quad\mbox{in}\;\mathbb{R}\times(0,\infty),\label{eq:2.2}\end{equation}
 as well as the initial conditions \begin{equation}
\tilde{u}(x,0)\geq u_{0}(x)\geq\hat{u}(x,0),\tilde{v}(x,0)\geq v_{0}(x)\geq\hat{v}(x,0)\quad\mbox{in}\quad\mathbb{R}.\label{eq:2.3}\end{equation}

\end{defn}
It is known from \cite{Leung}, \cite{Pao} that if there exist coupled
upper and lower solutions $\tilde{U}$ and $\hat{U}$ on $\mathbb{R}\times[0,\infty)$,
then the Cauchy problem \eqref{eq:1.1}-\eqref{eq:2.1} has a unique
solution $U(x,t)=(u(x,t),v(x,t))$ with $\tilde{u}(x,t)\geq u(x,t)\geq\hat{u}(x,t)$
and $\tilde{v}(x,t)\geq v(x,t)\geq\hat{v}(x,t)$ on $\mathbb{R}\times[0,\infty)$.
The next theorem gives the asymptotic stability and attraction region
of the equilibrium $(\frac{a_{1}}{b_{1}},0)$.

\begin{thm}
\label{thm2.2}Let $\alpha=b_{2}(\frac{a_{1}}{b_{1}}-\frac{a_{2}}{b_{2}})$,
$B=\frac{b_{1}b_{2}}{a_{1}c_{1}}(\frac{a_{1}+a_{2}}{b_{2}}-\frac{a_{1}}{b_{1}})$,
and $\beta==\frac{b_{1}b_{2}}{a_{1}}(\frac{a_{1}}{b_{1}}-\frac{a_{2}}{b_{2}})$.
Assuming that the hypotheses (H1) and (H3) hold, we know that $\alpha>0$,
$B\geq0$, and $\beta>0$. If for some $A>0$ and $0<\rho(0)<a_{1}/b_{1}$,
the initial functions satisfy \[
(\frac{a_{1}}{b_{1}}-\rho(0),0)\leq(u_{0}(x),v_{0}(x))\leq(\frac{a_{1}}{b_{1}}+A\rho(0),B\rho(0))\]
 on $R$, then the solution for \eqref{eq:1.1}-\eqref{eq:2.1} satisfies
\begin{equation}
(\frac{a_{1}}{b_{1}}-\rho(t),0)\leq(u(t,x),v(t,x))\leq(\frac{a_{1}}{b_{1}}+A\rho(t),B\rho(t))\label{eq:2.4}\end{equation}
 on $[0,\infty)\times R$ where \begin{equation}
\rho(t)=[\beta/\alpha+(\rho(0)^{-1}-\beta/\alpha)e^{\alpha t}]^{-1}.\label{eq:2.5}\end{equation}
 
\end{thm}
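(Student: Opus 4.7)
\medskip

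\noindent\textbf{Proof proposal.} My plan is to produce spatially constant coupled upper- and lower-solutions of the form
\[
\tilde{U}(x,t)=\bigl(\tfrac{a_{1}}{b_{1}}+A\rho(t),\;B\rho(t)\bigr),\qquad
\hat{U}(x,t)=\bigl(\tfrac{a_{1}}{b_{1}}-\rho(t),\;0\bigr),
\]
verify the four differential inequalities of Definition~\ref{def:2.1}, and then invoke the comparison principle from \cite{Leung,Pao} to squeeze the true solution between them on $\mathbb{R}\times[0,\infty)$. The initial-condition bounds are built into the hypothesis of the theorem, so only the four PDE inequalities need work. Since $\tilde{U}$ and $\hat{U}$ are $x$-independent, the diffusion terms drop and each inequality reduces to an ODE-type condition relating $\rho'(t)$ to a quadratic in $\rho(t)$.

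First I would check that the explicit formula \eqref{eq:2.5} satisfies the Bernoulli equation $\rho'(t)=-\alpha\rho(t)+\beta\rho(t)^{2}$; a direct differentiation of $\rho^{-1}=\beta/\alpha+(\rho(0)^{-1}-\beta/\alpha)e^{\alpha t}$ gives this. Next, the $\hat{v}\equiv 0$ inequality is trivial. The $\hat{u}$-inequality simplifies, using the algebraic identity $b_{1}-c_{1}B=\beta$ that follows immediately from the definitions, to the requirement $\alpha\le\beta\cdot a_{1}/b_{1}$; a short computation shows this in fact an equality, so the lower-solution inequality for $\hat{u}$ is saturated. The $\tilde{u}$-inequality becomes
\[
A\rho\bigl(a_{1}-\alpha+\rho(\beta+b_{1}A)\bigr)\ge 0,
\]
which holds because H3 forces $b_{2}/b_{1}<1+a_{2}/a_{1}$, hence $a_{1}-\alpha=a_{1}(1-b_{2}/b_{1})+a_{2}>0$.

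The remaining (and key) inequality is the $\tilde{v}$-inequality, which after cancellation reduces to
\[
B\rho^{2}\bigl[c_{2}B-\tfrac{b_{1}a_{2}}{a_{1}}\bigr]\ge 0,
\]
i.e.\ $c_{2}B\ge b_{1}a_{2}/a_{1}$. Substituting the definition of $B$ and clearing denominators, this is precisely H3. This is the one place where hypothesis H3 is used in a non-trivial way and is what I expect to be the main obstacle: the constants $A$ and $B$ had to be chosen exactly so that the quadratic-in-$\rho$ residual on the $v$-upper inequality collapses to the single threshold encoded by H3. Once all four inequalities are in hand, the order relations $\hat{U}\le\tilde{U}$ and $\hat{U}(x,0)\le U_{0}(x)\le\tilde{U}(x,0)$ are immediate from the hypothesis $0<\rho(0)<a_{1}/b_{1}$ and $A,B\ge 0$, so the comparison theorem yields \eqref{eq:2.4} with $\rho(t)$ given by \eqref{eq:2.5}.
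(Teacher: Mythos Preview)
Your proposal is correct and follows essentially the same route as the paper: both construct the identical spatially constant upper/lower pair, reduce the four inequalities in Definition~\ref{def:2.1} to conditions on $\rho'$ against quadratics in $\rho$, and check that the Bernoulli solution \eqref{eq:2.5} meets them, with H3 entering exactly at the $\tilde{v}$-inequality via the condition $c_{2}B\ge b_{1}a_{2}/a_{1}$ (equivalently the paper's $(b_{1}-c_{1}B)\ge(b_{2}-c_{2}B)$). The only cosmetic difference is that the paper first derives the generic inequalities \eqref{eq:2.6}--\eqref{eq:2.8} and then selects $B$ and $\rho$, whereas you substitute the given $B$ and $\rho$ from the start; the algebra and the use of the hypotheses coincide.
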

\begin{proof}
We will show that $(\tilde{u},\tilde{v})=(\frac{a_{1}}{b_{1}}+A\rho(t),B\rho(t))$
and $(\hat{u},\hat{v})=(\frac{a_{1}}{b_{1}}-\rho(t),0)$ are a pair
of coupled upper-lower solutions defined in Definition \eqref{def:2.1}.
One can easily see that $\hat{v}=0$ satisfies the required inequalities
in \eqref{eq:2.2}. 

We first start with $\tilde{u}=\frac{a_{1}}{b_{1}}+A\rho(t)$. From
\eqref{eq:2.2}, it needs to satisfy the differential inequality \[
A\rho'(t)\geq(\frac{a_{1}}{b_{1}}+A\rho)[a_{1}-b_{1}(\frac{a_{1}}{b_{1}}+A\rho)]=-A(a_{1}\rho+b_{1}A\rho^{2})\]
 For $\rho(t)\geq0$, it suffices to show that \begin{equation}
\begin{array}{ccl}
\rho'(t)+a_{1}\rho(t) & \geq & -b_{1}A\rho^{2}(t).\end{array}\label{eq:2.6}\end{equation}
Also for $\tilde{v}=B\rho(t)$, we see from \eqref{eq:2.2} that it
needs to satisfy the differential inequality \[
B\rho'(t)\geq B\rho[a_{2}-b_{2}(\frac{a_{1}}{b_{1}}-\rho)-c_{2}B\rho]\]
 and it suffices to show that \begin{equation}
\begin{array}{ccl}
\rho'(t)+b_{2}(\frac{a_{1}}{b_{1}}-\frac{a_{2}}{b_{2}})\rho(t) & \geq & (b_{2}-c_{2}B)\rho^{2}(t).\end{array}\label{eq:2.7}\end{equation}
Finally we look at $\hat{u}=\frac{a_{1}}{b_{1}}-\rho(t)$. Again from
\eqref{eq:2.2}, it needs to satisfy the differential inequality \[
-\rho'(t)\leq(\frac{a_{1}}{b_{1}}-\rho)[a_{1}-b_{1}(\frac{a_{1}}{b_{1}}-\rho)-c_{1}B\rho],\]
 or equivalently, \[
\rho'(t)\geq(\frac{a_{1}}{b_{1}}-\rho)(c_{1}B-b_{1})\rho.\]
 For $\rho(t)\geq0$ it suffices to show that \begin{equation}
\begin{array}{ccl}
\rho'(t)+\frac{a_{1}}{b_{1}}(b_{1}-c_{1}B)\rho(t) & \geq & (b_{1}-c_{1}B)\rho^{2}(t).\end{array}\label{eq:2.8}\end{equation}
From hypotheses (H1) and (H3) we observe the fact that \[
a_{1}>\frac{a_{1}b_{2}}{b_{1}}-a_{2}=b_{2}(\frac{a_{1}}{b_{1}}-\frac{a_{2}}{b_{2}})=\alpha>0\]
 Setting $\frac{a_{1}}{b_{1}}(b_{1}-c_{1}B)=\alpha$ leads to the
choice of 

\begin{equation}
B=\frac{b_{1}b_{2}}{a_{1}c_{1}}(\frac{a_{1}+a_{2}}{b_{2}}-\frac{a_{1}}{b_{1}})>0.\label{eq:2.9}\end{equation}
From the hypotheses (H3), one can obtain that \[
(b_{1}-c_{1}B)-(b_{2}-c_{2}B)=(b_{2}-\frac{a_{2}b_{1}}{a_{1}})-(b_{2}-c_{2}B)\]
 \[
=\frac{b_{1}c_{2}}{a_{1}c_{1}}(a_{1}+a_{2})-\frac{b_{2}c_{2}}{c_{1}}-\frac{a_{2}b_{1}}{a_{1}}\]
 \[
=\frac{b_{1}c_{2}}{c_{1}}[1+\frac{a_{2}}{a_{1}}-\frac{b_{2}}{b_{1}}-\frac{a_{2}c_{1}}{a_{1}c_{2}}]\geq0.\]
Noting that $b_{1}-c_{1}B=\frac{b_{1}b_{2}}{a_{1}}(\frac{a_{1}}{b_{1}}-\frac{a_{2}}{b_{2}})=\beta$,
we can now conclude that the three differential inequalities \eqref{eq:2.6}-\eqref{eq:2.8}
will all be satisfied if the function $\rho(t)$ is a positive solution
of the differential equation \begin{equation}
\rho'(t)+\alpha\rho(t)=\beta\rho^{2}(t).\label{eq:2.10}\end{equation}
This leads to the function $\rho(t)$ given in \eqref{eq:2.5} with
$\rho(0)<\alpha/\beta=a_{1}/b_{1}$ and $\lim_{t\rightarrow\infty}\rho(t)=0$
. 
\end{proof}
From the arbitrariness of constant $A$ in Theorem \eqref{thm2.2},
we then have the the attraction region for the equilibrium $(a_{1}/b_{1},0)$.
When the hypotheses (H1) and (H3) hold, for all the initial density
functions $(u_{0},v_{0})$ in the rectangular area \[
(0,\infty)\times[0,\frac{b_{2}}{c_{1}}(\frac{a_{1}+a_{2}}{b_{2}}-\frac{a_{1}}{b_{1}})),\]
 the solution $(u,v)$ of the system \eqref{eq:1.1}-\eqref{eq:2.1}
converges to the equilibrium $(a_{1}/b_{1},0)$ uniformly on $\mathbb{R}$
as $t\rightarrow\infty$ with the rate $e^{-\alpha t}$.

In the meantime, by adding hypothesis (H2), we can quickly find that
the equilibriums $(0,a_{2}/c_{2})$ and $(0,0)$ are both unstable.
For this purpose we construct a pair of upper-lower solutions \[
(\tilde{u},\tilde{v})=(M,a_{2}/c_{2}-\rho(t))~~\mbox{and}~~(\hat{u},\hat{v})=(A\rho(t),0),\]
 where $M\geq a_{1}/b_{1}$ is a constant. Constant $A$ and function
$\rho(t)$ will be determined later. The differential inequalities
in \eqref{eq:2.2} are automatically satisfied by $\tilde{u}$ and
$\hat{v}$. For $\hat{u}$ and $\tilde{v}$, the following relations
need to hold: \[
\rho'(t)\leq(a_{1}-\frac{a_{2}c_{1}}{c_{2}})\rho+(c_{1}-Ab_{1})\rho^{2},\]
and \[
\rho'(t)\leq(\frac{a_{2}}{c_{2}}-\rho)(c_{2}-Ab_{2})\rho.\]

The above inequalities are equivalent to

\begin{equation}
\begin{array}{c}
\rho'(t)-(a_{1}-\frac{a_{2}c_{1}}{c_{2}})\rho\leq-(b_{1}A-c_{1})\rho^{2},\\
\\\rho'(t)-\frac{a_{2}}{c_{2}}(b_{2}A-c_{2})\rho\leq-(b_{2}A-c_{2})\rho^{2}.\end{array}\label{eq:2.11}\end{equation}

Setting \[
a_{1}-\frac{a_{2}c_{1}}{c_{2}}=\frac{a_{2}}{c_{2}}(b_{2}A-c_{2}),\]
 from hypothesis (H2) we have \begin{equation}
A=\frac{1}{b_{2}}(c_{2}-c_{1}+\frac{a_{1}c_{2}}{a_{2}})>\frac{c_{2}}{b_{2}}.\label{eq:2.12}\end{equation}

From the hypotheses (H3), one can obtain that \[
(b_{1}A-c_{1})-(b_{2}A-c_{2})=\frac{b_{1}}{b_{2}}(c_{2}-c_{1}+\frac{a_{1}c_{2}}{a_{2}})-\frac{a_{1}c_{2}}{a_{2}}\]
 \[
=\frac{b_{1}c_{2}}{a_{1}c_{1}}(a_{1}+a_{2})-\frac{b_{2}c_{2}}{c_{1}}-\frac{a_{2}b_{1}}{a_{1}}\]
 \[
=\frac{a_{1}b_{1}c_{2}}{a_{2}b_{2}}[1+\frac{a_{2}}{a_{1}}-\frac{b_{2}}{b_{1}}-\frac{a_{2}c_{1}}{a_{1}c_{2}}]\geq0.\]

Both the inequalities in \eqref{eq:2.11} can be satisfied by choosing
$\rho(t)$ as the solution of the differential equation \begin{equation}
\rho'(t)-\gamma\rho=-\delta\rho^{2}\label{eq:2.13}\end{equation}
 where \[
\gamma=a_{1}-\frac{a_{2}c_{1}}{c_{2}}>0~~\mbox{and}~~\delta=b_{1}A-c_{1}\geq b_{2}A-c_{2}=\frac{a_{1}c_{2}}{a_{2}}-c_{1}>0.\]

This results in the function \begin{equation}
\rho(t)=\frac{\gamma}{\delta+Ce^{-\gamma t}}\label{eq:2.14}\end{equation}
 with an arbitrary constant $C>0$. For arbitrarily small $\epsilon>0$,
one can always find a constsnt $C$ large enough such that $\rho(0)=\gamma/(\delta+C)<\epsilon$.
The fact that $\lim_{t\rightarrow\infty}\rho(t)=\gamma/\delta$ leads
to the following theorem indicating that $(0,a_{2}/c_{2})$ and $(0,0)$
are both unstable.

\begin{thm}
Let \begin{equation}
\gamma=a_{1}-\frac{a_{2}c_{1}}{c_{2}},~A=\frac{1}{b_{2}}(c_{2}-c_{1}+\frac{a_{1}c_{2}}{a_{2}})\quad\mbox{and}\quad\delta=\frac{b_{1}}{b_{2}}(c_{2}-c_{1}+\frac{a_{1}c_{2}}{a_{2}})-c_{1}.\label{eq:2.15}\end{equation}
Assuming that the hypotheses (H1), (H2) and (H3) hold, we know that
$\gamma>0$, $A>c_{2}/b_{2}$ and $\delta\geq\frac{a_{1}c_{2}}{a_{2}}-c_{1}>0$.
For any arbitrarily small $\epsilon$ with $0<\epsilon<\min\{A\gamma/\delta,\gamma/\delta\}$,
if the initial functions $(u_{0},v_{0})$ satisfies $u_{0}(x)\geq\epsilon$
and $0\leq v_{0}(x)\leq a_{2}/c_{2}-\epsilon$ on $\mathbb{R}$, then
the solution $(u(x,t),v(x,t))$ of \eqref{eq:1.1} and \eqref{eq:2.1}
satisfies \begin{equation}
\liminf_{t\rightarrow\infty}u(x,t)\geq\frac{A\gamma}{\delta}\quad\mbox{and}\quad\limsup_{t\rightarrow\infty}v(x,t)\leq\frac{a_{2}}{c_{2}}-\frac{\gamma}{\delta}.\label{eq:2.16}\end{equation}

\end{thm}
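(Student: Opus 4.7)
The plan is to realize the theorem as a direct application of the coupled upper--lower solution framework of Definition \ref{def:2.1}, using the pair
\[
(\tilde{u},\tilde{v}) = (M,\; a_2/c_2 - \rho(t)), \qquad (\hat{u},\hat{v}) = (A\rho(t),\; 0),
\]
where $A$ is the constant in \eqref{eq:2.15}, $\rho(t) = \gamma/(\delta + Ce^{-\gamma t})$ is the solution \eqref{eq:2.14} of the Bernoulli ODE \eqref{eq:2.13}, $C>0$ is a free parameter to be chosen, and $M$ is a constant satisfying $M \ge \max\{a_1/b_1,\, \sup_x u_0(x),\, A\gamma/\delta\}$.

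First I would dispatch the three sign statements. Positivity of $\gamma$ is immediate from H2. The inequality $A>c_2/b_2$ follows from the defining identity $a_1-a_2c_1/c_2=(a_2/c_2)(b_2A-c_2)$ that was used to match the linear coefficients in \eqref{eq:2.11}. The chain $\delta \ge b_2A-c_2 = a_1c_2/a_2 - c_1 > 0$ is exactly the sequence of equalities and inequalities carried out between \eqref{eq:2.11} and \eqref{eq:2.13}, invoking H3 for the first step and H2 for the last.

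Next I would verify that $(\tilde U,\hat U)$ is a coupled upper--lower solution. The inequality for the constant $\tilde u=M$ reduces to $0 \ge M(a_1-b_1 M)$, which holds since $M \ge a_1/b_1$, and the inequality for $\hat v\equiv 0$ is trivial. The ordering $\tilde U\ge\hat U$ is preserved on $\mathbb{R}\times[0,\infty)$ because $A\rho(t)\le A\gamma/\delta\le M$ and $a_2/c_2-\rho(t)\ge 0$ once $C$ is large enough that $\rho(0)\le a_2/c_2$. The two remaining inequalities in \eqref{eq:2.2}, after dividing by $\rho$ and rearranging, are exactly \eqref{eq:2.11}; the algebraic matching of $A$ and $\delta$ already performed in the text shows that both collapse to the scalar Bernoulli equation \eqref{eq:2.13}, whose positive solution $\rho$ in \eqref{eq:2.14} then satisfies them as equalities. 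For the initial data, given $0<\epsilon<\min\{A\gamma/\delta,\gamma/\delta\}$, I would choose $C>0$ so large that $A\rho(0)\le \epsilon$ and $\rho(0)\le\epsilon$ simultaneously; then $\hat u(x,0)=A\rho(0)\le\epsilon\le u_0(x)$ and $\tilde v(x,0)=a_2/c_2-\rho(0)\ge a_2/c_2-\epsilon\ge v_0(x)$, so \eqref{eq:2.3} holds.

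Finally, the comparison principle recalled from \cite{Leung,Pao} yields $A\rho(t)\le u(x,t)\le M$ and $0\le v(x,t)\le a_2/c_2-\rho(t)$ for all $(x,t)\in\mathbb{R}\times[0,\infty)$. Since $\rho(t)\to \gamma/\delta$ as $t\to\infty$ uniformly in $x$, passing to $\liminf$ and $\limsup$ gives \eqref{eq:2.16}. The essential obstacle has already been overcome in the derivation of \eqref{eq:2.11}--\eqref{eq:2.13}: the subtle point is choosing the single parameter $A$ so that two nonlinear differential inequalities -- one propagating the lower bound on $u$, one propagating the upper bound on $v$ -- reduce to a common scalar Bernoulli ODE with a positive quadratic coefficient. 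Hypothesis H3 is precisely what makes that quadratic coefficient nonnegative; once the matching is in place, the rest of the argument is a routine invocation of the comparison principle.
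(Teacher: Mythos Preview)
Your proposal is correct and follows essentially the same route as the paper: the same coupled upper--lower pair $(\tilde u,\tilde v)=(M,\,a_2/c_2-\rho(t))$, $(\hat u,\hat v)=(A\rho(t),\,0)$, the same reduction of the two nontrivial inequalities to the single Bernoulli ODE \eqref{eq:2.13}, and the same passage to the limit $\rho(t)\to\gamma/\delta$. You are in fact slightly more careful than the paper in specifying $M\ge\max\{a_1/b_1,\sup_x u_0,A\gamma/\delta\}$ so that both the $\tilde u$-inequality and the ordering $\tilde U\ge\hat U$ are secured; one small remark is that since $\rho$ is increasing to $\gamma/\delta$ and $\delta\ge a_1c_2/a_2-c_1$ forces $\gamma/\delta\le a_2/c_2$, the nonnegativity of $\tilde v$ holds for every $C>0$, not merely for $C$ large.
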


\section{\textbf{\label{sec:3}THE TRAVELING WAVES}\setcounter{equation}{0}}

In section \ref{sec:2}, we showed system (\ref{eq:1.1}) has two
unstable constant steady states: $(0,0)$, $(0,\frac{a_{2}}{c_{2}})$
and one asymptotically stable constant steady state $(\frac{a_{1}}{b_{1}},0)$.
We will show that there are traveling wave solutions of (\ref{eq:1.1})
having the form

\begin{equation}
(u(x,t),v(x,t))=(kw(\sqrt{\frac{a_{1}}{d}}x+ca_{1}t),qz(\sqrt{\frac{a_{1}}{d}}x+ca_{1}t)),\label{eq:3.1}\end{equation}
and connecting the unstable state $(0,\frac{a_{2}}{c_{2}})$ with
$(\frac{a_{1}}{b_{1}},0)$ as the variable $\sqrt{\frac{a_{1}}{d}}x+ca_{1}t$
runs from $-\infty$ to $+\infty$. The constant $c$ in \eqref{eq:3.2}
is the wave speed and the minimal speed is also called the critical
wave speed. Throughout the rest of the paper, we assume $d_{1}=d_{2}=d$.

To simplify notions, we introduce the following transformations to
\eqref{eq:1.1}:

\begin{equation}
\begin{array}{l}
r=a_{1}^{-1}c_{1}q,\,\,\,\epsilon_{1}=a_{1}^{-1}a_{2},\\
\\b=b_{2}b_{1}^{-1},\,\,\,\epsilon_{2}=a_{2}^{-1}c_{2}q-1,\\
\\k=a_{1}b_{1}^{-1},\,\,\,\hbox{\mbox{and}}\,\,\, q\,\,\hbox{\mbox{is a constant satisfying}}\,\,\, a_{2}c_{2}^{-1}<q<a_{1}c_{1}^{-1}.\end{array}\label{eq:3.2}\end{equation}

Under transformations (\ref{eq:3.1}) and (\ref{eq:3.2}), system
(\ref{eq:1.1}) is changed into

\begin{equation}
\left\{ \begin{array}{l}
-w_{\xi\xi}+cw_{\xi}=w(1-w-rz)\\
\hspace{200pt}-\infty<\xi<\infty,\\
-z_{\xi\xi}+cz_{\xi}=z(\epsilon_{1}-bw-\epsilon_{1}(1+\epsilon_{2})z)\end{array}\right.\label{eq:3.3}\end{equation}
with the corresponding boundary conditions 

\begin{equation}
\left\{ \begin{array}{lll}
\lim_{\xi\rightarrow-\infty}(w(\xi),z(\xi)) & = & (0,{\displaystyle \frac{1}{1+\epsilon_{2}}),}\\
\\\lim_{\xi\rightarrow\infty}(w(\xi),z(\xi)) & = & (1,0).\end{array}\right.\label{eq:3.4}\end{equation}
where $\xi=\sqrt{\frac{a_{1}}{d}}x+ca_{1}t$ in \eqref{eq:3.3}, \eqref{eq:3.4}
for $x\in\mathbb{R}$ and $t\in\mathbb{R}^{+}$ .

We further introduce the transformations

\begin{equation}
u_{1}(\xi)=w(\xi),\quad u_{2}(\xi)={\displaystyle \frac{1}{1+\epsilon_{2}}-z(\xi)}\label{eq:3.5}\end{equation}
to change system (\ref{eq:3.3}) into the following monotone (cooperative)
system:

\begin{equation}
\left\{ \begin{array}{lll}
-(u_{1})_{\xi\xi}+c(u_{1})_{\xi} & = & u_{1}({\displaystyle \frac{1+\epsilon_{2}-r}{1+\epsilon_{2}}-u_{1}+ru_{2}),}\\
\\-(u_{2})_{\xi\xi}+c(u_{2})_{\xi} & = & ({\displaystyle \frac{1}{1+\epsilon_{2}}-u_{2})(bu_{1}-\epsilon_{1}(1+\epsilon_{2})u_{2})}\end{array}\right.\label{eq:3.6}\end{equation}
with boundary conditions 

\begin{equation}
\left\{ \begin{array}{lll}
\lim_{\xi\rightarrow-\infty}(w(\xi),z(\xi)) & = & (0,{\displaystyle 0),}\\
\\\lim_{\xi\rightarrow\infty}(w(\xi),z(\xi)) & = & (1,{\displaystyle \frac{1}{1+\epsilon_{2}}}).\end{array}\right.\label{eq:3.7}\end{equation}

\medskip{}

\noindent
\textbf{Remark 3.1.$\quad$} Note that from hypotheses [H1]-[H3] and relations (3.2), we have the following inequalities: 

\medskip{}

           $0<\epsilon_{1}<b$, $0<r<1$, $\epsilon_{2}>0$
and $1-\frac{r}{1+\epsilon_{2}}>b-\epsilon_{1}>0$.

\medskip{}

Before showing the existence of the traveling wave solutions for (\ref{eq:3.6})
with boundary conditions (\ref{eq:3.7}), we first recall the following
well known fact: (please see \cite{KolmogorovPetrovskiiPiskunov},
\cite{Sattinger} for the proof)

\medskip{}

Let a function $f$ be a $C^{2}$ function on the interval $[0,\beta], \beta>0$,
with $f>0$ on $(0,\beta)$, and $f(0)=f(\beta)=0$, $f'(0)=\alpha_{1}>0$,
$f'(\beta)=-\beta_{1}<0$.

\begin{lem}
\label{lem:3.1}Corresponding to every $c\geq2\sqrt{\alpha_{1}}$, the
boundary value problem\begin{equation}
\left\{ \begin{array}{l}
\omega''(\xi)-c\omega'(\xi)+f(\omega(\xi))=0,\\
\\\omega(-\infty)=0,\quad\omega(+\infty)=b.\end{array}\quad\quad\xi\in\mathbb{R}\right.\label{eq:3.8}\end{equation}
has a unique monotonically increasing traveling wave solution $\omega_{c}(\xi)$
, $\xi\in\mathbb{R}$ , where the lower index denotes the dependence
of the wave solution $\omega$ on $c$.
\end{lem}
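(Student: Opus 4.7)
My plan is a classical phase-plane shooting argument. First I would rewrite the second order ODE as the first order system $\omega'=p$, $p'=cp-f(\omega)$, whose equilibria are $(0,0)$ and $(\beta,0)$. Any strictly increasing heteroclinic wave must trace a trajectory through the open strip $R=\{(\omega,p):0<\omega<\beta,\ p>0\}$. Linearization at the origin gives the characteristic equation $\lambda^{2}-c\lambda+\alpha_{1}=0$, whose roots $\lambda_{\pm}=\tfrac12(c\pm\sqrt{c^{2}-4\alpha_{1}})$ are real and positive exactly when $c\ge 2\sqrt{\alpha_{1}}$; this is the arithmetic source of the critical speed. At $(\beta,0)$ the characteristic equation $\lambda^{2}-c\lambda-\beta_{1}=0$ has one positive and one negative root, so $(\beta,0)$ is a saddle admitting a one dimensional stable manifold.

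The heart of the proof is the shooting step: I would follow backwards in $\xi$ the unique branch of the stable manifold at $(\beta,0)$ that enters $R$, and show that it remains in $R$ and tends to $(0,0)$ as $\xi\to-\infty$. The bottom boundary $\{p=0,\ 0<\omega<\beta\}$ cannot be crossed in the backward direction because $p'=-f(\omega)<0$ there. For an upper barrier I would introduce the line $p=\lambda_{-}\omega$; using the identity $\lambda_{-}^{2}-c\lambda_{-}+\alpha_{1}=0$, the quantity $p'-\lambda_{-}\omega'$ on this line reduces to $\alpha_{1}\omega-f(\omega)$, so the line bounds an invariant triangle provided $f(\omega)\le \alpha_{1}\omega$ on $[0,\beta]$, which is the KPP-type hypothesis implicit in the applications of this lemma. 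Establishing cleanly the invariance of this trapping triangle is the main technical obstacle, and it is exactly where $c\ge 2\sqrt{\alpha_{1}}$ is used, since the construction requires $\lambda_{-}$ to be real.

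With the orbit confined to a bounded set, its $\alpha$-limit set as $\xi\to-\infty$ is nonempty, compact, and connected; a standard Bendixson--Dulac argument rules out periodic orbits in $R$, so by Poincar\'e--Bendixson the limit set collapses to a single equilibrium, which can only be $(0,0)$. Together with $p>0$ throughout, this produces a strictly monotone heteroclinic wave with the required boundary values at $\pm\infty$. Uniqueness modulo translation is then automatic: the stable manifold of $(\beta,0)$ entering $R$ is one dimensional, so the phase-plane image of any admissible $\omega_{c}$ coincides with this single curve, and two such solutions can differ only by a shift in the parameter $\xi$, which is the usual translation ambiguity of an autonomous ODE.
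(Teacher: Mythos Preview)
The paper does not actually prove this lemma; it is stated as a ``well known fact'' with a citation to Kolmogorov--Petrovskii--Piskunov and Sattinger. Your phase-plane shooting argument is precisely the classical route taken in those sources, so you are reconstructing the proof the paper defers to rather than offering an alternative.

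Your argument is sound, and you are right to flag the one real subtlety: the trapping-triangle construction with the barrier $p=\lambda_{-}\omega$ needs the KPP inequality $f(\omega)\le\alpha_{1}\omega$ on $[0,\beta]$, which the lemma as written does not assume. Without it the minimal speed can strictly exceed $2\sqrt{\alpha_{1}}$ and the statement fails. In the paper's actual applications (equations (3.9) and (3.13)) the nonlinearity is a scaled logistic $\alpha Y(1-Y)$, so the KPP condition holds trivially; the lemma is only invoked where this extra hypothesis is valid. One minor simplification to your write-up: you do not need Bendixson--Dulac to exclude periodic orbits in $R$, since $\omega'=p>0$ there forces $\omega$ to be strictly monotone along trajectories, which already rules out closed orbits and pins the $\alpha$-limit set to the equilibrium $(0,0)$.
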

We next show the existence of the traveling wave solution for system
\eqref{eq:3.6}-\eqref{eq:3.7}.

\vspace{7pt}

\begin{thm}
\label{thm: 3.2}Let the parameters $\epsilon_{1}$, $\epsilon_{2}$,
$b$ and $r$ satisfy conditions in Remark 3.1, then corresponding to every $c\geq2\sqrt{1-\frac{r}{1+\epsilon_{2}}}$
system (\ref{eq:3.6}) has a monotone traveling wave solution satisfying
the boundary condition (\ref{eq:3.7}). (Recall hypotheses [H1]-[H3] imply all the conditions in Remark 3.1 are valid.)
\end{thm}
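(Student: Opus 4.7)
The plan is to apply a monotone iteration scheme to the cooperative system \eqref{eq:3.6}--\eqref{eq:3.7}. Under Remark 3.1 the reaction in \eqref{eq:3.6} is nondecreasing in its off-diagonal variable on the rectangle $[0,1]\times[0,1/(1+\epsilon_2)]$, so the system is genuinely cooperative there. It therefore suffices to exhibit a smooth coupled pair $(\tilde u_1,\tilde u_2)\geq(\hat u_1,\hat u_2)$ satisfying the standard super/sub-solution differential inequalities and then invoke the relaxed Wu--Zou / Boumenir--Nguyen iteration; the ``relaxed'' version is essential because, as the introduction warns, the lower solution we can build does not match the right boundary value.

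For the upper solution I would use Lemma~\ref{lem:3.1}. Observing that fixing $u_2$ at a suitable constant reduces the first equation in \eqref{eq:3.6} to a scalar KPP equation whose linearization rate at $u_1=0$ equals $1-r/(1+\epsilon_2)>0$ (Remark 3.1), Lemma~\ref{lem:3.1} supplies, for every $c\geq 2\sqrt{1-r/(1+\epsilon_2)}$, a smooth monotone KPP wave $W_c$ with exactly this sharp critical speed. A smooth super-solution $\tilde u_1$ is then produced from $W_c$ by the usual device of gluing $W_c$ near $-\infty$ to an appropriate constant ceiling (at least $1$) near $+\infty$, $C^2$-smoothed so that the residual in the super-solution inequality absorbs the extra coupling term $+r\tilde u_2$. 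A parallel KPP-based construction of $\tilde u_2$ uses the smaller rate $b-\epsilon_1$ (Remark 3.1), so the critical speed of the coupled system is governed by the first component alone.

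The main obstacle is the lower solution. The target is a smooth sub-solution vanishing at $-\infty$ with the sharp exponential rate $\lambda_-(c)=c/2-\sqrt{c^2/4-(1-r/(1+\epsilon_2))}$---this sharpness is what will later pin down the asymptotics of the true wave at $-\infty$ and certify the advertised critical speed. No smooth nonnegative function can simultaneously be a sub-solution, decay at precisely this rate at $-\infty$, and reach $(1,1/(1+\epsilon_2))$ at $+\infty$. Following the paper's hint I would settle for a ``short'' sub-solution of the form $\hat u_i(\xi)=\max\{0,\,A_ie^{\lambda_i\xi}-B_ie^{\mu_i\xi}\}$, with $\lambda_i$ equal to the required decay rate, $\mu_i>\lambda_i$ large enough for the quadratic correction to dominate the unfavourable coupling on the support, and the max smoothed to be $C^2$. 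Such $\hat u_i$ is positive only on a left half-line and so violates the right boundary condition; the relaxed Wu--Zou iteration is precisely the framework that tolerates this. Verification of the sub-solution inequality is then a direct algebraic check, resting on the fact that the leading $A_ie^{\lambda_i\xi}$ term already annihilates the linearized operator at $-\infty$.

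With $(\tilde U,\hat U)$ in hand I would run the standard cooperative iteration $U^{(n+1)}=T(U^{(n)})$, where $T$ inverts the shifted linear operator $-\partial_\xi^2+c\partial_\xi+MI$ applied to $MU^{(n)}+F(U^{(n)})$ with $M>0$ chosen large enough to make the right-hand side monotone in $U^{(n)}$ on the relevant rectangle. Starting from $\tilde U$ and $\hat U$ generates two monotone-in-$n$ sequences; Schauder estimates give $C^2_{\mathrm{loc}}$ convergence to a solution $U^*$ of \eqref{eq:3.6} squeezed between $\hat U$ and $\tilde U$, and translation-invariance of the iteration transmits monotonicity in $\xi$ from the initial data to $U^*$. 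Since $\tilde U(-\infty)=0=\hat U(-\infty)$, the squeeze forces $U^*(-\infty)=(0,0)$; since $\hat U$ is strictly positive on a left half-line, $U^*\not\equiv 0$ and the monotone limit $U^*(+\infty)$ must be a nonzero equilibrium of \eqref{eq:3.6}, which under the hypotheses of Remark 3.1 leaves only $(1,1/(1+\epsilon_2))$. This yields the desired traveling wave for every $c\geq 2\sqrt{1-r/(1+\epsilon_2)}$.
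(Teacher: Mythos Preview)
Your overall framework---cooperative structure, monotone iteration \`a la Wu--Zou/Boumenir--Nguyen, relaxed right boundary for the sub-solution---matches the paper. But your concrete super- and sub-solution constructions diverge from the paper's, and the super-solution as you describe it does not work.

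You say you will fix $u_2$ at a constant so that the first equation becomes KPP with linearization rate $1-\frac{r}{1+\epsilon_2}$. The only constant giving that rate is $u_2=0$, so your $W_c$ solves $-W_c''+cW_c'=W_c\bigl(1-\tfrac{r}{1+\epsilon_2}-W_c\bigr)$. Substituting $\tilde u_1=W_c$ into the super-solution inequality for the first equation leaves a residual $-rW_c\,u_2\le 0$, the wrong sign, and this failure occurs uniformly on $\mathbb R$, not just near $+\infty$; no amount of ``gluing to a ceiling'' fixes it. Your proposed ``parallel KPP construction of $\tilde u_2$ with rate $b-\epsilon_1$'' is also off: $b-\epsilon_1$ is the linearization rate at the \emph{right} equilibrium, while at $(0,0)$ the second equation linearizes to $-\epsilon_1<0$, so there is no autonomous KPP wave for $u_2$ at all---the second component is driven entirely by coupling.

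The paper's device is different and is the main point of the construction: take the single KPP profile $Y$ solving
\[
Y''-cY'+\Bigl(1-\tfrac{r}{1+\epsilon_2}\Bigr)Y(1-Y)=0,\qquad Y(-\infty)=0,\ Y(+\infty)=1,
\]
which has the \emph{same} linear rate $1-\frac{r}{1+\epsilon_2}$ at $0$ but a weaker quadratic term, so the wave reaches $1$. Then set $\bar u_1=Y$ and $\bar u_2=\frac{1}{1+\epsilon_2}Y$. The residual in the first super-solution inequality becomes $\bar u_1\bigl(\tfrac{r}{1+\epsilon_2}\bar u_1-ru_2\bigr)\ge 0$ for $u_2\le\bar u_2$, and the second inequality reduces to $(1-Y)Y\bigl(1-\tfrac{r}{1+\epsilon_2}-(b-\epsilon_1)\bigr)\ge 0$, which is exactly the last condition in Remark~3.1. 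No gluing is needed; the pair already satisfies the boundary conditions \eqref{eq:3.7}.

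For the sub-solution the paper again avoids your $\max\{0,Ae^{\lambda\xi}-Be^{\mu\xi}\}$ ansatz: it takes $\tilde u_1=Z$, $\tilde u_2=\tfrac{l}{1+\epsilon_2}Z$ with $Z=\tfrac{1-r/(1+\epsilon_2)}{1-lr/(1+\epsilon_2)}\,Y$ a constant multiple of the \emph{same} KPP wave, and chooses $l\in(0,1)$ small enough (condition \eqref{eq:3.19}) so that both sub-solution inequalities hold. This sub-solution is smooth and strictly positive on all of $\mathbb R$; it fails only the right boundary value, which the relaxed iteration tolerates. Because both super- and sub-solution are literal scalar multiples of $Y$, they share identical exponential rates at $-\infty$, which is what immediately yields the sharp asymptotics of Corollary~\ref{cor:3.4}. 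Your exponential-difference sub-solution could in principle be made to work, but it is not what the paper does, and your claimed identification of $U^*(+\infty)$ (``leaves only $(1,\tfrac{1}{1+\epsilon_2})$'') skips over the extra equilibrium $(0,\tfrac{1}{1+\epsilon_2})$ that the paper explicitly rules out via the strictly positive lower bound $\lim_{\xi\to\infty}Z>0$.
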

\begin{proof}
The proof will be done by monotone iterating a pair of smooth upper-
and lower-solutions. We first construct a twice differentiable smooth
upper-solutions. 

According to lemma \ref{lem:3.1}, for every $c\geq2\sqrt{1-\frac{r}{1+\epsilon_{2}}}$,
there is correspondingly a montonically increasing $C^{2}$ function
$Y(\xi)$, $\xi\in\mathbb{R}$ satisfying

\noindent \begin{equation}
\left\{ \begin{array}{l}
Y_{\xi\xi}-cY_{\xi}+(1-\frac{r}{1+\epsilon_{2}})Y(1-Y)=0,\\
\\Y(-\infty)=0,\quad Y(\infty)=1.\end{array}\right.\label{eq:3.9}\end{equation}

Define

\noindent \begin{eqnarray}
\bar{u}_{1}(\xi)=Y(\xi) & , & \bar{u}_{2}(\xi)=\frac{1}{1+\epsilon_{2}}Y(\xi).\label{eq: 3.10}\end{eqnarray}
 For $0\leq u_{2}(\xi)\leq\bar{u}_{2}(\xi)$, we readily verify that
\begin{equation}
\begin{array}{ll}
 & -\bar{u}_{1}''(\xi)+c\bar{u}_{1}'(\xi)-\bar{u}_{1}({\displaystyle \frac{1+\epsilon_{2}-r}{1+\epsilon_{2}}-\bar{u}_{1}+ru_{2})}\\
\\= & (1-{\displaystyle \frac{r}{1+\epsilon_{2}})\bar{u}_{1}(1-\bar{u}_{1})-\bar{u}_{1}(1-{\displaystyle \frac{r}{1+\epsilon_{2}}-\bar{u}_{1}+ru_{2})}}\\
\\= & \bar{u}_{1}\left\{ (1-{\displaystyle \frac{r}{1+\epsilon_{2}})(1-\bar{u}_{1})-1+{\displaystyle \frac{r}{1+\epsilon_{2}}+\bar{u}_{1}-ru_{2}}}\right\} \\
\\= & \bar{u}_{1}\left\{ {\displaystyle \frac{r}{1+\epsilon_{2}}\bar{u}_{1}-ru_{2}}\right\} \\
\\\geq & \bar{u}_{1}\left\{ {\displaystyle \frac{r}{1+\epsilon_{2}}\bar{u}_{1}-r\bar{u}_{2}}\right\} \equiv0\end{array}\label{eq: 3.11}\end{equation}
 for all $-\infty<\xi<\infty$. For $0\leq u_{1}(\xi)\leq\bar{u}_{1}(\xi)$,
one verifies \begin{equation}
\begin{array}{ll}
 & -\bar{u}_{2}''(\xi)+c\bar{u}_{2}'(\xi)-({\displaystyle \frac{1}{1+\epsilon_{2}}-\bar{u}_{2})(bu_{1}-\epsilon_{1}(1+\epsilon_{2})\bar{u}_{2})}\\
\\= & {\displaystyle \frac{1}{1+\epsilon_{2}}\left\{ -Y''+cY'-(1-Y)(bu_{1}-\epsilon_{1}Y)\right\} }\\
\\= & {\displaystyle \frac{1}{1+\epsilon_{2}}\left\{ (1-{\displaystyle \frac{r}{1+\epsilon_{2}})Y(1-Y)+(1-Y)(\epsilon_{1}Y-bu_{1})}\right\} }\\
\\\geq & {\displaystyle \frac{1}{1+\epsilon_{2}}(1-Y)\left\{ (1-{\displaystyle \frac{r}{1+\epsilon_{2}})Y+\epsilon_{1}Y-bY}\right\} }\\
= & {\displaystyle \frac{1}{1+\epsilon_{2}}(1-Y)Y\left\{ 1-{\displaystyle \frac{r}{1+\epsilon_{2}}+\epsilon_{1}-b}\right\} \ge0.}\\
\\\end{array}\label{eq: 3.12}\end{equation}
 The last inequality is true provided $b<1-\frac{r}{1+\epsilon_{2}}+\epsilon_{1}$,
which is valid due to hypothesis H. It is also straightforward to
verify that $(\bar{u}_{1},\bar{u}_{2})$ satisfies the boundary conditions
(\ref{eq:3.7}).

\noindent We next construct a twice continuously differentiable lower
solution for the system (\ref{eq:3.6})-(\ref{eq:3.7}). Let the function
$Z(\xi)$, $\xi\in\mathbb{R}$ be the solution of 

\noindent \begin{equation}
\left\{ \begin{array}{l}
Z_{\xi\xi}-cZ_{\xi}+(1-\frac{r}{1+\epsilon_{2}})Z(1-\frac{1-\frac{lr}{1+\epsilon_{2}}}{1-\frac{r}{1+\epsilon_{2}}}Z)=0,\\
\\Z(-\infty)=0,\quad Z(\infty)=\frac{1-\frac{r}{1+\epsilon_{2}}}{1-\frac{lr}{1+\epsilon_{2}}}.\end{array}\right.\label{eq:3.13}\end{equation}
 Here $l$ is some number in the interval $(0,1)$ to be determined.
One can readily verify that the solutions of (\ref{eq:3.9}) and (\ref{eq:3.13})
are related by the following \begin{equation}
Z(\xi)=\frac{1-\frac{r}{1+\epsilon_{2}}}{1-\frac{lr}{1+\epsilon_{2}}}Y(\xi),\,\,\,\,\xi\in\mathbb{R}.\label{eq:3.14}\end{equation}
 Since $0<l<1$, we have \begin{equation}
Z(\xi)<Y(\xi),\,\,\,\xi\in\mathbb{R}.\label{eq:3.15}\end{equation}

\noindent We define a lower solution of (\ref{eq:3.6}), (\ref{eq:3.7})
by setting \begin{equation}
\tilde{u}_{1}=Z,\,\,\,\tilde{u}_{2}=\frac{l}{1+\epsilon_{2}}Z,\label{eq:3.16}\end{equation}
 where $l\in(0,1)$ is to be determined. We readily verify that they
satisfy \begin{equation}
\begin{array}{ll}
 & -\tilde{u}_{1}''(\xi)+c\tilde{u}_{1}'(\xi)-\tilde{u}_{1}({\displaystyle \frac{1+\epsilon_{2}-r}{1+\epsilon_{2}}-\tilde{u}_{1}+r\tilde{u}_{2})}\\
\\= & Z\left\{ (1-{\displaystyle \frac{r}{1+\epsilon_{2}})-(1-{\displaystyle \frac{lr}{1+\epsilon_{2}})Z-(\frac{1+\epsilon_{2}-r}{1+\epsilon_{2}})+Z-\frac{rl}{1+\epsilon_{2}}Z}}\right\} \\
\\= & 0.\end{array}\label{eq:3.17}\end{equation}
 Moreover, we have \vspace{3pt}
 \begin{equation}
\begin{array}{ll}
 & -\tilde{u}_{2}''(\xi)+c\tilde{u}_{2}'(\xi)-({\displaystyle \frac{1}{1+\epsilon_{2}}-\tilde{u}_{2})(b\tilde{u}_{1}-\epsilon_{1}(1+\epsilon_{2})\tilde{u}_{2})}\\
\\= & {\displaystyle \frac{l}{1+\epsilon_{2}}Z\left\{ 1-\frac{r}{1+\epsilon_{2}}-(1-\frac{lr}{1+\epsilon_{2}})Z\right\} -(\frac{1}{1+\epsilon_{2}}-\frac{l}{1+\epsilon_{2}}Z)\left\{ bZ-\epsilon_{1}lZ\right\} }\\
\\= & {\displaystyle \frac{l}{1+\epsilon_{2}}Z\left\{ (1-Z)-r(\frac{1}{1+\epsilon_{2}}-\frac{l}{1+\epsilon_{2}}Z)\right\} -(\frac{1}{1+\epsilon_{2}}-\frac{l}{1+\epsilon_{2}}Z)\left\{ bZ-\epsilon_{1}lZ\right\} }\\
\\\le & {\displaystyle (\frac{1}{1+\epsilon_{2}}-\frac{l}{1+\epsilon_{2}}Z)\left\{ -\frac{rl}{1+\epsilon_{2}}Z-bZ+\epsilon_{1}lZ\right\} +(1-lZ)\frac{lZ}{1+\epsilon_{2}}}\\
\\= & {\displaystyle (\frac{1}{1+\epsilon_{2}}-\frac{l}{1+\epsilon_{2}}Z)\left\{ -\frac{rl}{1+\epsilon_{2}}-b+\epsilon_{1}l+l\right\} Z}\\
\\\le & 0.\end{array}\label{eq:3.18}\end{equation}
 The last inequality is valid provided that \begin{equation}
0<l\le\min\{1,\frac{b}{1+\epsilon_{1}-\frac{r}{1+\epsilon_{2}}}\}.\label{eq:3.19}\end{equation}
Also by the limiting boundary conditions of (\ref{eq:3.13}) we see
$(\tilde{u}_{1},\tilde{u}_{2})(-\infty)=(0,0)$ and $(\tilde{u}_{1},\tilde{u}_{2})(+\infty)=(\frac{1+\epsilon_{2}-r}{1+\epsilon_{2}-lr},\frac{l(1+\epsilon_{2}-r)}{(1+\epsilon_{2})(1+\epsilon_{2}-lr)}).$
Inequality (\ref{eq:3.17}) along with (\ref{eq:3.18}) show that
$(\tilde{u}_{1},\tilde{u}_{2})$ consists of a pair of lower-solutions
for system (\ref{eq:3.6}), (\ref{eq:3.7}). 

Noting that such constructed upper- and lower-solution pairs are ordered.
We can apply the monotone iteration methods provided in \cite{WuZou}
or \cite{BoumenirNguyen} to derive the conclusion of this Theorem.
Here we only sketch the ideas. 

Following the notions in \cite{WuZou}, we write $\beta=diag.(0,0)$,
${\bf K}=(K_{1},K_{2}):=(1,\frac{1}{1+\epsilon_{2}})$, lower-solution
$\tilde{\rho}(\xi)=(Z(\xi),\frac{l}{1+\epsilon_{2}}Z(\xi))$, and
upper-solution 
$\bar{\rho}(\xi)=(Y(\xi),$\,\,\,\,\,\,\,\,\,
$\frac{1}{1+\epsilon_{2}}Y(\xi))$,
$\xi\in\mathbb{R}$,
as described above. As explained below, a slight variant of Theorem
3.6 or Theorem 3.6' in \cite{WuZou} is needed, because $(F_{1},F_{2})$
defined in (\ref{eq:3.22}) below has an additional zero at $(0,\frac{1}{1+\epsilon_{2}})$
between ${\bf 0}:=(0,0)$ and ${\bf K}$. By means of the iterative
procedure in the proof of Theorem 3.6 in \cite{WuZou}, we first obtain
a solution of \eqref{eq:3.6} $\phi(\xi):=(u_{1}(\xi),u_{2}(\xi))$,
satisfying the inequality \begin{equation}
\tilde{\rho}(\xi)\le\phi(\xi)\le\bar{\rho}(\xi),\quad\xi\in\mathbb{R}.\label{eq:3.20}\end{equation}
 From the comparison argument with $\bar{\rho}(\xi)$ in the proof
of Theorem 3.6 in \cite{WuZou}, we have \[
\lim_{\xi\rightarrow-\infty}u_{1}(\xi)=\lim_{\xi\rightarrow-\infty}u_{2}(\xi)=0.\]
 Again, by the limit and comparison argument in the proof of Theorem
3.6 in \cite{WuZou}, we obtain for $i=1,2$, \begin{equation}
\lim_{\xi\rightarrow\infty}u_{i}(\xi)=Q_{i},\qquad F_{i}(Q_{1},Q_{2})=0,\label{eq:3.21}\end{equation}
 where \begin{equation}
\begin{array}{l}
F_{1}(\rho_{1},\rho_{2})=\rho_{1}(\frac{1+\epsilon_{2}-r}{1+\epsilon_{2}}-\rho_{1}+r\rho_{2}),\\
\\F_{2}(\rho_{1},\rho_{2})=(\frac{1}{1+\epsilon_{2}}-\rho_{2})(b\rho_{1}-\epsilon_{1}(1+\epsilon_{2})\rho_{2});\end{array}\label{eq:3.22}\end{equation}
 and \begin{equation}
0<\lim_{\xi\rightarrow\infty}Z(\xi)\le Q_{1}\le K_{1}=1,\quad0<\lim_{\xi\rightarrow\infty}\frac{l}{1+\epsilon_{2}}Z(\xi)\le Q_{2}\le K_{2}=\frac{1}{1+\epsilon_{2}}.\label{eq:3.23}\end{equation}
 We then deduce from condition (H) and \eqref{eq:3.20}, \eqref{eq:3.21},
\eqref{eq:3.23} that we must have \begin{equation}
Q_{1}=K_{1}=1,\quad Q_{2}=K_{2}=\frac{1}{1+\epsilon_{2}}.\label{eq:3.24}\end{equation}

\end{proof}
\vspace{7pt}

\noindent \textbf{Remark 3.2.}  One can translate Theorem 3.2 into the following: Assuming
hypotheses {[}H1], {[}H2], {[}H3], then for any $c\ge2\sqrt{1-\frac{c_{1}a_{2}}{c_{2}a_{1}}}$.
system \eqref{eq:1.1} has a traveling wave solution connecting $(0,\frac{a_{2}}{c_{2}})$
with $(\frac{a_{1}}{b_{1}},0)$ as the variable $\sqrt{\frac{a_{1}}{d}}x+ca_{1}t$
running from $-\infty$ to $+\infty$.

\noindent \textbf{Remark 3.3.} Theorem 3.2 does not insure the \textit{strict} monotonicity
of the resulting traveling wave solutions, as the iteration is only
applied for the upper-solution, and the lower solution is served as
a nonzero barrier so that the iteration limit does not coverge to
zero.

\vspace{7pt}

To further study the asymptotics of the traveling wave solutions as
obtained in Theorem \ref{thm: 3.2}, we shall need the following Lemma concerning the scalar problem (3.8).

\begin{lem}
\label{lem: 3.3}The solution $w_{c}(\xi)$ to (3.8), described in Lemma 3.1,
has the following asymptotic behaviors:

1. Corresponding to the wave speed $c > 2\sqrt{\alpha_{1}}$,

\begin{equation}
\left\{ \begin{array}{l}
\omega_{c}(\xi)=a_{\omega}e^{\frac{c-\sqrt{c^{2}-4\alpha_{1}}}{2}\xi}+o(e^{\frac{c-\sqrt{c^{2}-4\alpha_{1}}}{2}\xi}),\quad\mbox{ as }\xi\rightarrow-\infty\\
\\\omega_{c}(\xi)=\beta-b_{\omega}e^{\frac{c-\sqrt{c^{2}+4\beta_{1}}}{2}\xi}+o(e^{\frac{c-\sqrt{c^{2}+4\beta_{1}}}{2}\xi}),\quad\mbox{ as }\xi\rightarrow+\infty,\end{array}\right.\label{eq:3.25}\end{equation}
where $a_{\omega}$ and $b_{\omega}$ are positive constants;

2. Corresponding to minimal wave speed $c=2\sqrt{\alpha_{1}}$,

\begin{equation}
\left\{ \begin{array}{l}
\omega_{c}(\xi)=(a_{c}+d_{c}\xi)e^{\sqrt{\alpha_{1}}\xi}+o(\xi e^{\sqrt{\alpha_{1}}\xi}),\quad\mbox{ as }\xi\rightarrow-\infty\\
\\\omega_{c}(\xi)=\beta-b_{c}e^{\frac{c-\sqrt{c^{2}+4\beta_{1}}}{2}\xi}+o(e^{\frac{c-\sqrt{c^{2}+4\beta_{1}}}{2}\xi}),\quad\mbox{ as }\xi\rightarrow+\infty;\end{array}\right.\label{eq:3.26}\end{equation}
where the constant $d_{c}<0$, $b_{c}>0$ and $a_{c}\in\mathbb{R}$.
\end{lem}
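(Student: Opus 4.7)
The plan is to recast the scalar ODE \eqref{eq:3.8} as the planar first-order system
\begin{equation*}
\omega' = v, \qquad v' = cv - f(\omega),
\end{equation*}
study the two equilibria $(0,0)$ and $(\beta,0)$ in the $(\omega,v)$ phase plane, and apply standard ODE asymptotic theory (e.g.\ stable/unstable manifold theorems and the asymptotic integration results of Coddington--Levinson) to extract the leading behavior of the heteroclinic connection $\omega_c$ near each end. At $(\beta,0)$ the linearization has eigenvalues $\frac{c\pm\sqrt{c^{2}+4\beta_{1}}}{2}$ of opposite sign, so $(\beta,0)$ is a hyperbolic saddle in either case $c\geq2\sqrt{\alpha_{1}}$; at $(0,0)$ the eigenvalues are $\lambda_{\pm}=\frac{c\pm\sqrt{c^{2}-4\alpha_{1}}}{2}$, a stable node (in backward time) with distinct positive eigenvalues when $c>2\sqrt{\alpha_{1}}$ and an improper node with double eigenvalue $\sqrt{\alpha_{1}}$ when $c=2\sqrt{\alpha_{1}}$.

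The $+\infty$ asymptotics are handled uniformly for both cases. The stable manifold of the saddle $(\beta,0)$ is one-dimensional and is tangent to the eigenvector associated with the negative eigenvalue $\mu=\frac{c-\sqrt{c^{2}+4\beta_{1}}}{2}$. Since $\omega_{c}(+\infty)=\beta$, the orbit must lie on this stable manifold, and standard asymptotic integration of the perturbed linear equation $(\beta-\omega)''-c(\beta-\omega)'-\beta_{1}(\beta-\omega)=\text{h.o.t.}$ yields $\omega_c(\xi)=\beta-b\,e^{\mu\xi}+o(e^{\mu\xi})$ for some constant $b$. Monotonicity of $\omega_c$ (Lemma~\ref{lem:3.1}) gives $\omega_c<\beta$, forcing $b_{\omega}>0$ (resp.\ $b_{c}>0$).

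For the $-\infty$ asymptotics with $c>2\sqrt{\alpha_{1}}$, I would pass to reverse time and observe that $(0,0)$ becomes a stable node whose strong-stable manifold (in reverse time) is a one-dimensional curve tangent to the eigenvector corresponding to the faster rate $\lambda_{+}$, while the generic two-dimensional center-stable structure gives the slower rate $\lambda_{-}$. Any orbit approaching $(0,0)$ admits an expansion $\omega_c(\xi)=a\,e^{\lambda_{-}\xi}+\tilde{a}\,e^{\lambda_{+}\xi}+o(e^{\lambda_{+}\xi})$; positivity forces $a\geq 0$, and an exclusion argument (the strong-stable trajectory is unique up to scaling and its forward continuation cannot simultaneously meet the stable manifold of $(\beta,0)$ in the generic KPP setting---equivalently, one invokes the classical fact that for $c>c^{*}$ the KPP wave carries the slow decay rate, cf.~\cite{KolmogorovPetrovskiiPiskunov},~\cite{Sattinger}) rules out $a=0$, giving $a_{\omega}>0$.

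The critical case $c=2\sqrt{\alpha_{1}}$ is the delicate one and where the main obstacle lies. With a double eigenvalue $\sqrt{\alpha_{1}}$, the fundamental solutions of the linearized equation are $e^{\sqrt{\alpha_{1}}\xi}$ and $\xi e^{\sqrt{\alpha_{1}}\xi}$, so a variation-of-parameters/contraction argument applied to the integral reformulation of \eqref{eq:3.8} near $-\infty$ produces an expansion $\omega_c(\xi)=(a_{c}+d_{c}\xi)e^{\sqrt{\alpha_{1}}\xi}+o(\xi e^{\sqrt{\alpha_{1}}\xi})$. Positivity of $\omega_c$ as $\xi\to-\infty$ immediately yields $d_{c}\leq0$ (otherwise $a_c+d_c\xi$ turns negative for $\xi\ll0$). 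Ruling out $d_{c}=0$---which would reduce $\omega_c$ to a pure exponential of rate $\sqrt{\alpha_{1}}$ and force it onto the one-dimensional strong manifold at $(0,0)$---is the main technical hurdle; I would handle it by a limiting/continuity argument, taking $c\searrow 2\sqrt{\alpha_{1}}$ along non-critical waves (each of which, by step three, decays at the strictly slower rate $\lambda_{-}(c)$ that collapses to $\sqrt{\alpha_{1}}$ in the limit, producing the logarithmic $\xi$-correction in the critical wave), thereby concluding $d_{c}<0$ and $a_{c}\in\mathbb{R}$ as claimed.
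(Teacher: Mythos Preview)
Your sketch is correct and is in fact far more detailed than what the paper offers: the paper's entire proof of this lemma is the sentence ``The conclusion follows \cite{Sattinger}, \cite{Thiery} with slight changes.'' In other words, the authors treat the result as classical and defer to the literature. Your phase-plane analysis, saddle/node classification of the equilibria, and invocation of Coddington--Levinson asymptotic integration are exactly the ingredients one finds in those references, so there is no divergence in method---you have simply unpacked what the paper leaves implicit.

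One remark on the critical case: your limiting argument for $d_c<0$ is the right idea, but as written it is a bit loose. Saying that ``$\lambda_-(c)$ collapses to $\sqrt{\alpha_1}$ in the limit, producing the logarithmic $\xi$-correction'' hides the mechanism; the $\xi$-factor arises from the \emph{resonance} between the merging exponents $\lambda_\pm(c)$, and to make this rigorous one must track how the amplitude $a_\omega(c)$ of the slow mode diverges as $c\searrow 2\sqrt{\alpha_1}$ (or, equivalently, argue directly that the unique orbit on the one-dimensional strong-unstable curve at the degenerate node cannot be the heteroclinic). This is precisely what Gallay \cite{Thiery} carries out, so citing that paper---as the authors do---closes the gap.
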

\begin{proof}
The conclusion follows \cite{Sattinger}, \cite{Thiery} with slight
changes.
\end{proof}
\noindent Based on Lemma \ref{lem: 3.3}, we study the asymptotic
behaviors of the traveling wave solutions of system (\ref{eq:3.6}),
(\ref{eq:3.7}) at infinities.

\begin{cor}
\label{cor:3.4}Assume hypotheses [H1] to [H3], and thus all the conditions in Remark 3.1 are satisfied. Let $\alpha=1-\frac{r}{1+\epsilon_{2}}$, then the traveling wave solutions $(u_{1}(\xi),u_{2}(\xi))$
of system (\ref{eq:3.6}), (\ref{eq:3.7}) as obtained in Theorem
\ref{thm: 3.2} have the following asymptotic behaviors: 

1. Corresponding to each wave speed $c>2\sqrt{\alpha}$, the traveling
wave solution $(u_{1}(\xi),u_{2}(\xi))$ satisfies \begin{equation}
\left(\begin{array}{c}
u_{1}(\xi)\\
\\u_{2}(\xi)\end{array}\right)=\left(\begin{array}{c}
A_{1}\\
\\A_{2}\end{array}\right)e^{\frac{c-\sqrt{c^{2}-4\alpha}}{2}\xi}+o(e^{\frac{c-\sqrt{c^{2}-4\alpha}}{2}\xi})\label{eq:3.27}\end{equation}
 as $\xi\rightarrow-\infty$; and \begin{equation}
\left(\begin{array}{c}
u_{1}(\xi)\\
\\u_{2}(\xi)\end{array}\right)=\left(\begin{array}{c}
1\\
\\{\displaystyle \frac{1}{1+\epsilon_{2}}}\end{array}\right)-\left(\begin{array}{c}
\bar{A}_{1}\\
\\\bar{A}_{2}\end{array}\right)e^{\frac{c-\sqrt{c^{2}+4(b-\epsilon_{1})}}{2}\xi}+o(e^{\frac{c-\sqrt{c^{2}+4(b-\epsilon_{1})}}{2}\xi})\label{eq:3.28}\end{equation}
as $\xi\rightarrow+\infty,$ where $A_{1}$, $A_{2}$, $\bar{A}_{1}$,
$\bar{A}_{2}$ are positive constants; 

2. Corresponding to the wave speed $c_{\mbox{critical}}=2\sqrt{\alpha}$,
the traveling wave solution $(u_{1}(\xi),u_{2}(\xi))$ satisfies \begin{equation}
\left(\begin{array}{c}
u_{1}(\xi)\\
\\u_{2}(\xi)\end{array}\right)=\left(\begin{array}{c}
A_{11c}+A_{12c}\xi\\
\\A_{21c}+A_{22c}\xi\end{array}\right)e^{\sqrt{\alpha}\xi}+o(\xi e^{\sqrt{\alpha}\xi})\label{eq:3.29}\end{equation}
as $\xi\rightarrow-\infty$; and \begin{equation}
\left(\begin{array}{c}
u_{1}(\xi)\\
\\u_{2}(\xi)\end{array}\right)=\left(\begin{array}{c}
1\\
\\{\displaystyle \frac{1}{1+\epsilon_{2}}}\end{array}\right)-\left(\begin{array}{c}
\bar{A}_{11}\\
\\\bar{A}_{22}\end{array}\right)e^{\frac{c-\sqrt{c^{2}+4(b-\epsilon_{1})}}{2}\xi}+o(e^{\frac{c-\sqrt{c^{2}+4(b-\epsilon_{1})}}{2}\xi})\label{eq:3.30}\end{equation}
 as $\xi\rightarrow+\infty$, where $A_{12c},\, A_{22c}<0$, $A_{11c}$,
$A_{21c}\in\mathbb{R}$ and $\bar{A}_{11},$$\bar{A}_{22}>0$. 
\end{cor}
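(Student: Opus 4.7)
My plan is to combine the smooth upper/lower solution squeeze already built in Theorem \ref{thm: 3.2} with a linear-ODE analysis of system (3.6) near each equilibrium. The two ingredients complement each other: the KPP asymptotics of the barriers (Lemma \ref{lem: 3.3}) fix the exponential decay rate of each component, while the constant-coefficient linearization of (3.6) determines the ratio of the two components along the leading eigendirection and the sign of the coefficients.

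For the asymptotics at $-\infty$, observe that both $Y$ (giving $\bar u_1=Y$ and $\bar u_2=\frac{1}{1+\epsilon_2}Y$) and $Z$ (giving $\tilde u_1=Z$ and $\tilde u_2=\frac{l}{1+\epsilon_2}Z$) solve scalar KPP-type equations whose linearization at $0$ shares the same coefficient $\alpha=1-\frac{r}{1+\epsilon_2}$. When $c>2\sqrt{\alpha}$, Lemma \ref{lem: 3.3}(1) gives $Y(\xi)\sim a_Y e^{\lambda_-\xi}$ and $Z(\xi)\sim a_Z e^{\lambda_-\xi}$ with $\lambda_-=\frac{c-\sqrt{c^2-4\alpha}}{2}$ and $a_Y,a_Z>0$, so the sandwich (3.20) yields $u_i(\xi)=\Theta(e^{\lambda_-\xi})$ for $i=1,2$. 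Linearizing (3.6) at $(0,0)$, the $u_1$-equation decouples with characteristic roots $\lambda_\pm=\frac{c\pm\sqrt{c^2-4\alpha}}{2}>0$; a variation-of-parameters/Gronwall argument forces $u_1(\xi)=A_1 e^{\lambda_-\xi}+o(e^{\lambda_-\xi})$ with $A_1\in[a_Z,a_Y]>0$. Substituting into the $u_2$-equation, whose homogeneous roots $\mu_\pm=\frac{c\pm\sqrt{c^2+4\epsilon_1}}{2}$ satisfy $\mu_+>\lambda_-$ and $\mu_-<0$, the forced particular solution dominates and gives $A_2=\frac{bA_1}{(1+\epsilon_2)(\alpha+\epsilon_1)}>0$, establishing (3.27). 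For the critical speed $c=2\sqrt{\alpha}$, Lemma \ref{lem: 3.3}(2) gives $Y,Z\sim(a+d\xi)e^{\sqrt{\alpha}\xi}$ with $d<0$, so the sandwich forces $u_1$ of the stated form with $A_{12c}<0$; since $\sqrt{\alpha}$ is not a root of $\mu^2-c\mu-\epsilon_1$, a non-resonant particular solution for $u_2$ of the same polynomial-exponential form exists with $A_{22c}=\frac{bA_{12c}}{(1+\epsilon_2)(\alpha+\epsilon_1)}<0$, giving (3.29).

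For the asymptotics at $+\infty$, set $v_1=1-u_1$, $v_2=\frac{1}{1+\epsilon_2}-u_2$. A direct expansion using $\alpha=1-\frac{r}{1+\epsilon_2}$ shows that the linearization at $(1,\frac{1}{1+\epsilon_2})$ decouples in the opposite direction: $v_2''-cv_2'-(b-\epsilon_1)v_2=0$ and $v_1''-cv_1'-v_1+rv_2=0$. Since $b-\epsilon_1>0$ (Remark 3.1), the $v_2$-equation has a unique negative characteristic root $\bar{\mu}_-=\frac{c-\sqrt{c^2+4(b-\epsilon_1)}}{2}$, so $v_2\sim \bar{A}_2 e^{\bar{\mu}_-\xi}$ with $\bar{A}_2>0$ (positivity from $u_2\le\frac{Y}{1+\epsilon_2}<\frac{1}{1+\epsilon_2}$, the strong maximum principle, and comparison against an exponential sub-solution). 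The inequality $b-\epsilon_1<1$ from Remark 3.1 makes $\bar{\mu}_-$ strictly larger than the $v_1$-homogeneous negative root $\frac{c-\sqrt{c^2+4}}{2}$, so the particular solution dominates, producing $v_1\sim\bar{A}_1 e^{\bar{\mu}_-\xi}$ with $\bar{A}_1=\frac{r\bar{A}_2}{1-(b-\epsilon_1)}>0$. Because the linearization at the stable endpoint is nondegenerate, this form is unchanged in the critical case, yielding both (3.28) and (3.30).

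The main obstacle I anticipate is not the algebra above but the step of upgrading the $\Theta(e^{\lambda_-\xi})$ sandwich to a sharp leading-order expansion with an identifiable limit constant. I would accomplish this by rewriting each linearized equation as an integral equation against the scalar Green's function of the appropriate constant-coefficient operator, working in the weighted space $\{f:e^{-\lambda_-\xi}f\in L^\infty(-\infty,M)\}$, and absorbing the $O(e^{2\lambda_-\xi})$ quadratic nonlinearities via a Gronwall-type estimate. The critical case is subtler: one must show that the $O(\xi^2 e^{2\sqrt{\alpha}\xi})$ nonlinearity is genuinely $o(\xi e^{\sqrt{\alpha}\xi})$ as $\xi\to-\infty$ and carefully handle the polynomial-exponential convolution against the degenerate scalar Green's function without generating spurious resonant terms.
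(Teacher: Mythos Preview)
Your proposal is correct and at $-\infty$ coincides with the paper's argument: both squeeze the wave between the barriers $(\bar u_1,\bar u_2)$ and $(\tilde u_1,\tilde u_2)$, which by Lemma~\ref{lem: 3.3} share the same exponential rate, and then read off (\ref{eq:3.27}) and (\ref{eq:3.29}). You are in fact more careful here than the paper, which simply declares the conclusion ``immediate by comparison''; your plan to pass from the $\Theta(e^{\lambda_-\xi})$ sandwich to a genuine leading term via an integral-equation/Gronwall argument in a weighted space, and your explicit computation of the ratio $A_2/A_1=\dfrac{b}{(1+\epsilon_2)(\alpha+\epsilon_1)}$, are additions the paper does not supply.

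At $+\infty$ your route diverges from the paper's. The paper does \emph{not} work with $v_i=\text{(equilibrium)}-u_i$ directly; instead it differentiates the wave, so that $(w_1,w_2)=(u_1',u_2')$ satisfies the \emph{exactly linear} variational system (\ref{eq:3.31}), rewrites it as a first-order system, checks the integrability condition $\int^\infty|\mathcal R^{+}-\mathcal R^{\infty}|\,d\xi<\infty$, and then invokes Coddington--Levinson/Sattinger to transfer the decay rates of the constant-coefficient limit system (\ref{eq:3.32}) to the true variational system, finally integrating from $\xi$ to $+\infty$. Your approach works with the nonlinear equations for $v_1,v_2$ and relies on the triangular structure of the Jacobian at $(1,\tfrac{1}{1+\epsilon_2})$ together with a by-hand absorption of the quadratic remainder $-bv_1v_2+\epsilon_1(1+\epsilon_2)v_2^2$. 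Both methods ultimately exploit the same triangular linearization and the inequality $0<b-\epsilon_1<1$ to order the two stable exponents; the paper's differentiation trick buys a genuinely linear equation (so the Coddington--Levinson machinery applies verbatim), while your direct method buys the explicit formula $\bar A_1=\dfrac{r\,\bar A_2}{1-(b-\epsilon_1)}$, which the paper does not state. Note that at $+\infty$ there is no squeeze (the lower solution tends to the wrong limit), so in your approach the exponential decay of $v_2$ must come from stable-manifold/asymptotic-ODE theory rather than from the barriers; this is the analogue of the paper's exponential-dichotomy step and should be made explicit.
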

\noindent \vspace{7pt}

\begin{proof}
For $c=2\sqrt{1-\frac{r}{1+\epsilon_{2}}}$, according to Lemma \ref{lem: 3.3}
the upper-solution $(\bar{u}_{1},\bar{u}_{2})$ and the lower-solution
$(\tilde{u}_{1},\tilde{u}_{2})$ as defined in (\ref{eq: 3.10}),
(\ref{eq:3.16}) have the following respective asymptotic behaviors
at $-\infty$, 

\[
\left(\begin{array}{c}
\bar{u}_{1}\xi)\\
\\\bar{u}_{2}(\xi)\end{array}\right)=\left(\begin{array}{c}
\bar{A}_{11c}+\bar{A}_{12c}\xi\\
\\\bar{A}_{21c}+\bar{A}_{22c}\xi\end{array}\right)e^{\sqrt{\alpha}\xi}+o(\xi e^{\sqrt{\alpha}\xi}),\]
and 

\[
\left(\begin{array}{c}
\tilde{u}_{1}\xi)\\
\\\tilde{u}_{2}(\xi)\end{array}\right)=\left(\begin{array}{c}
\tilde{B}_{11c}+\tilde{B}_{12c}\xi\\
\\\tilde{B}_{21c}+\tilde{B}_{22c}\xi\end{array}\right)e^{\sqrt{\alpha}\xi}+o(\xi e^{\sqrt{\alpha}\xi});\]

While with wave speed $c>2\sqrt{1-\frac{r}{1+\epsilon_{2}}}$, 

\[
\left(\begin{array}{c}
\bar{u}_{1}\xi)\\
\\\bar{u}_{2}(\xi)\end{array}\right)=\left(\begin{array}{c}
\bar{A}_{1}\\
\\\bar{A}_{2}\end{array}\right)e^{\frac{c-\sqrt{c^{2}-4\alpha}}{2}\xi}+o(e^{\frac{c-\sqrt{c^{2}-4\alpha}}{2}\xi}),\]
and

\[
\left(\begin{array}{c}
\tilde{u}_{1}\xi)\\
\\\tilde{u}_{2}(\xi)\end{array}\right)=\left(\begin{array}{c}
\tilde{A}_{1}\\
\\\tilde{A}_{2}\end{array}\right)e^{\frac{c-\sqrt{c^{2}-4\alpha}}{2}\xi}+o(e^{\frac{c-\sqrt{c^{2}-4\alpha}}{2}\xi}).\]
where $\bar{A}_{i}$, $\tilde{A}_{i}$, $-\bar{A}_{i2c}$, $-\tilde{B}_{i2c}$
$i=1,2$ are positive constants and $\bar{A}_{i1c}$, $\tilde{B}_{i1c}$
are constants.

Noting the upper- and lower-solutions have the same asymptotic growth
rate at $-\infty$, we immediately have (\ref{eq:3.27}) and (\ref{eq:3.29})
via comparison. 

We next derive the asymptotic behaviors of the traveling wave solutions
of (\ref{eq:3.6}), (\ref{eq:3.7}) at $+\infty$. Since the the traveling
wave solution $(u_{1}(\xi),u_{2}(\xi))^{T}$ monotonically aproaches
the steady state $(1,\frac{1}{1+\epsilon_{2}})$ as $\xi\rightarrow+\infty$,
by letting $(w_{1}(\xi),w_{2}(\xi))$ be the derivative of the traveling
wave solution, we have $(w_{1},w_{2})(+\infty)=(0,0)$ \cite{WuZou}.

We linearize system (\ref{eq:3.6}) about the traveling wave solution
$(u_{1}(\xi),u_{2}(\xi))$ to obtain 

\begin{equation}
\left\{ \begin{array}{l}
\phi_{\xi\xi}-c\phi_{\xi}+(1-\frac{r}{1+\epsilon_{2}}-2u_{1}+ru_{2})\phi+ru_{1}\psi=0,\\
\\\psi_{\xi\xi}-c\psi_{\xi}+b(\frac{1}{1+\epsilon_{2}}-u_{2})\phi+[-\epsilon_{1}-bu_{1}+2\epsilon_{1}(1+\epsilon_{2})u_{2}]\psi=0.\end{array}\right.\label{eq:3.31}\end{equation}
It is easy to see that $(w_{1},w_{2})$ solves \eqref{eq:3.31}. 

The limit system of \eqref{eq:3.31} at $+\infty$ is a constant coefficient
system, and is given by 

\begin{equation}
\left\{ \begin{array}{l}
(\phi^{+})_{\xi\xi}-c(\phi^{+})_{\xi}-\phi^{+}+r\psi^{+}=0,\\
\\(\psi^{+})_{\xi\xi}-c(\psi^{+})_{\xi}+(\epsilon_{1}-b)\psi^{+}=0.\end{array}\right.\label{eq:3.32}\end{equation}

The exponential growth rates of the traveling wave solutions of \eqref{eq:3.6}-\eqref{eq:3.7}
are determined by those of the solutions of \eqref{eq:3.32}. The
justification is as follows: first we note that \eqref{eq:3.32} admits
exponential dichotomy. By the roughness of exponential dichotomy,
solutions of \eqref{eq:3.31} grow/decay exponentially (possibly with
a different exponential rate) \cite{Coddington}, \cite{Sandstede}.
Since the derivative $(w_{1},w_{2})$ of the traveling wave solution
solves \eqref{eq:3.31}, the traveling wave solutions of (\ref{eq:3.6})
approach exponentially to the steady state $(1,\frac{1}{1+\epsilon_{2}})$. 

To find out the exact asymptotic rates of the solutions of \eqref{eq:3.31},
we first change \eqref{eq:3.31}, \eqref{eq:3.32} into the first
order systems. letting $\phi_{\xi}=\phi_{1}$, $\psi_{\xi}=\psi_{1}$,
$\phi_{\xi}^{+}=\phi_{1}^{+}$, $\psi_{\xi}^{+}=\psi_{1}^{+}$, we
have

\[
\frac{d}{d\xi}\left(\begin{array}{c}
\phi\\
\phi_{1}\\
\psi\\
\psi_{1}\end{array}\right)=\mathcal{R}^{+}\left(\begin{array}{c}
\phi\\
\phi_{1}\\
\psi\\
\psi_{1}\end{array}\right)\]
and

\[
\frac{d}{d\xi}\left(\begin{array}{c}
\phi^{+}\\
\phi_{1}^{+}\\
\psi^{+}\\
\psi_{1}^{+}\end{array}\right)=\mathcal{R}^{\infty}\left(\begin{array}{c}
\phi^{+}\\
\phi_{1}^{+}\\
\psi^{+}\\
\psi_{1}^{+}\end{array}\right).\]
where \[
\mathcal{R}^{+}=\left(\begin{array}{cccc}
0 & 1 & 0 & 0\\
-1+\frac{r}{1+\epsilon_{2}}+2u_{1}(\xi)-ru_{2}(\xi) & c & -ru_{1}(\xi) & 0\\
0 & 0 & 0 & 1\\
-b(\frac{1}{1+\epsilon_{2}}-u_{2}(\xi)) & 0 & \epsilon_{1}+bu_{1}(\xi)-2\epsilon_{1}(1+\epsilon_{2})u_{2}(\xi) & c\end{array}\right)\]
and

\[
\mathcal{R}^{\infty}=\left(\begin{array}{cccc}
0 & 1 & 0 & 0\\
1 & c & -r & 0\\
0 & 0 & 0 & 1\\
0 & 0 & b-\epsilon_{1} & c\end{array}\right).\]

The exponential growth of the traveling wave solution $(u_{1}(\xi),u_{2}(\xi))$
at infinity implies that 

\[
\int_{t_{0}}^{\infty}|\mathcal{R}^{+}-\mathcal{R}^{\infty}|d\xi<+\infty\quad\mbox{for}\;\mbox{some large}\: t_{0}>0.\]
It is easy to check that the matrix $\mathcal{R}^{\infty}$ has 4
distinct eigenvalues, then by \cite{Coddington}and \cite{Sattinger},
the asymptotic exponential rates of the solutions of \eqref{eq:3.31}
are the same as those of the solutions of \eqref{eq:3.32}.

We search for the solutions of \eqref{eq:3.32} with zero limit at
$+\infty$. Owing to conditions in Remark 3.1, the second equation of \eqref{eq:3.32}
has general solution of the form:

\begin{equation}
\psi^{+}(\xi)=c_{21}e^{\frac{c-\sqrt{c^{2}+4(b-\epsilon_{1})}}{2}\xi}+c_{22}e^{\frac{c+\sqrt{c^{2}+4(b-\epsilon_{1})}}{2}\xi}\label{eq:3.33}\end{equation}
with $\frac{c-\sqrt{c^{2}+4(b-\epsilon_{1})}}{2}<0$, $\frac{c+\sqrt{c^{2}+4(b-\epsilon_{1})}}{2}>0$
and $c_{21},$ $c_{22}$ two constants. 

Substituting \eqref{eq:3.33} into the first equation of \eqref{eq:3.32},
we then have

\begin{equation}
(\phi^{+})_{\xi\xi}-c(\phi^{+})_{\xi}-\phi^{+}=-r(c_{21}e^{\frac{c-\sqrt{c^{2}+4(b-\epsilon_{1})}}{2}\xi}+c_{22}e^{\frac{c+\sqrt{c^{2}+4(b-\epsilon_{1})}}{2}\xi}),\label{eq:3.34}\end{equation}
with general solution of the form

\begin{equation}
\phi^{+}(\xi)=c_{11}e^{\frac{c+\sqrt{c^{2}+4}}{2}\xi}+c_{12}e^{\frac{c-\sqrt{c^{2}+4}}{2}\xi}+\bar{c}_{21}e^{\frac{c-\sqrt{c^{2}+4(b-\epsilon_{1})}}{2}\xi}+\bar{c}_{22}e^{\frac{c+\sqrt{c^{2}+4(b-\epsilon_{1})}}{2}\xi}.\label{eq:3.35}\end{equation}

Comparing (3.31), (3.32) and using the fact that $(w_{1},w_{2})$ is a solution of (3.31) with $(w_{1}(+\infty),w_{2}(+\infty))=(0,0)$, we deduce from (3.35) that as $\xi \rightarrow  +\infty$,

$$
\begin{array}{l}
w_{1}(\xi)= (\hat{c}_{12}+o(1))e^{\frac{c-\sqrt{c^{2}+4}}{2}\xi}+(\tilde{c}_{21}+o(1))e^{\frac{c-\sqrt{c^{2}+4(b-\epsilon_{1})}}{2}\xi}\\ \\
w_{2}(\xi)=(\hat{c}_{21}+o(1))e^{\frac{c-\sqrt{c^{2}+4(b-\epsilon_{1})}}{2}\xi}
\end{array}
$$
with $\tilde{c}_{21}\neq 0$ and $\hat{c}_{21}\neq 0$.  Since $\frac{c-\sqrt{c^{2}+4}}{2}<\frac{c-\sqrt{c^{2}+4(b-\epsilon_{1})}}{2}<0$, by [H3], we may write
$$
w_{1}(\xi)= (\tilde{c}_{21}+o(1))e^{\frac{c-\sqrt{c^{2}+4(b-\epsilon_{1})}}{2}\xi}
$$
as $\xi \rightarrow +\infty$.

We therefore
conclude this proof by integrating $(w_{1}(\xi),w_{2}(\xi))^{T}$
from $\xi$ to $+\infty$ with $\xi$ sufficiently large. 
\end{proof}
We next derive a corollary that is important to section \ref{sec:4}. 

\begin{cor}
\label{cor:3.5}Assume the hypotheses of Corollary 3.4. For every wave speed with $c\geq2\sqrt{1-\frac{r}{1+\epsilon_{2}}}$,
the corresponding traveling wave solution $(u_{1}(\xi),u_{2}(\xi))^{T}$ of
system (\ref{eq:3.6}), (\ref{eq:3.7}) obtained in Corollary 3.4 is a strict monotonic function
for $\xi\in\mathbb{R}$. 
\end{cor}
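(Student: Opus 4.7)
The plan is to upgrade the non-strict monotonicity that is implicit in the construction of $(u_1,u_2)$ in Theorem \ref{thm: 3.2} to \emph{strict} monotonicity via the strong maximum principle for cooperative systems, applied to the linearized system \eqref{eq:3.31}. This is tailor-made for the present situation because the change of variables \eqref{eq:3.5} was precisely designed to turn \eqref{eq:3.3} into a cooperative system, and the derivatives $w_i:=u_i'$ solve the cooperative linearization of that system.

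First I would establish that each component $u_i$ is non-decreasing in $\xi$. The upper solution $\bar\rho(\xi)=(Y(\xi),Y(\xi)/(1+\epsilon_2))$ used in Theorem \ref{thm: 3.2} is strictly increasing, since the KPP front $Y$ from Lemma \ref{lem:3.1} is. The Wu--Zou iteration that produces $(u_1,u_2)$ starts from $\bar\rho$ and applies a translation-invariant monotone integral operator $T$ at each step. By translation invariance of $T$ together with its monotonicity on the cooperative nonlinearity, the property of being componentwise non-decreasing in $\xi$ is preserved along the iteration, and so the pointwise limit $(u_1,u_2)$ is non-decreasing in $\xi$. Writing $w_i:=u_i'$, this gives $w_1,w_2\ge 0$ on $\mathbb{R}$.

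Next I would differentiate \eqref{eq:3.6} in $\xi$ to see that $(w_1,w_2)$ solves the linearized system \eqref{eq:3.31}. The ordering \eqref{eq:3.20} together with the strict bounds $Z(\xi)>0$ and $Y(\xi)<1$ for every $\xi\in\mathbb{R}$ yields $0<u_1(\xi)\le 1$ and $0\le u_2(\xi)<\tfrac{1}{1+\epsilon_2}$ pointwise on $\mathbb{R}$. Hence the off-diagonal coefficients $r u_1$ and $b\bigl(\tfrac{1}{1+\epsilon_2}-u_2\bigr)$ in \eqref{eq:3.31} are \emph{strictly} positive everywhere, making \eqref{eq:3.31} a weakly coupled, fully irreducible cooperative linear elliptic system in $\xi$. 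The boundary conditions \eqref{eq:3.7} moreover rule out the trivial solution $(w_1,w_2)\equiv(0,0)$.

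The final step is to invoke the strong maximum principle for weakly coupled cooperative systems (in the spirit of Protter--Weinberger). Any interior zero of one component $w_i$ forces $w_i\equiv 0$ on $\mathbb{R}$ by the scalar maximum principle, and then the strictly positive off-diagonal coupling forces the other component to satisfy a homogeneous scalar equation whose non-negative solution likewise vanishes, contradicting the asymptotic behavior of Corollary \ref{cor:3.4}. Therefore $w_1,w_2>0$ on all of $\mathbb{R}$, i.e.\ $(u_1,u_2)$ is strictly increasing. I expect the main technical obstacle to lie precisely in this irreducibility step: one must confirm that the coupling coefficients are not merely non-negative but bounded below on every compact set, so that a zero of one component genuinely propagates to the other. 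This is exactly what the strict pointwise bounds $u_1>0$ and $u_2<\tfrac{1}{1+\epsilon_2}$, inherited from comparison with $\tilde\rho$ and $\bar\rho$, are there to guarantee.
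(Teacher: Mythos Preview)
Your proposal is correct and follows essentially the same approach as the paper's proof: both extract non-strict monotonicity from the Wu--Zou iteration, observe that $(u_1',u_2')$ solves the cooperative linearized system \eqref{eq:3.31}, and then invoke the strong maximum principle to upgrade to strict positivity of the derivatives. The paper's proof is a terse three-sentence version of exactly this argument; your write-up is a more explicit unpacking, in particular spelling out the irreducibility check (strict positivity of the off-diagonal coefficients $ru_1$ and $b(\tfrac{1}{1+\epsilon_2}-u_2)$) that the paper leaves implicit in the phrase ``monotonicity of system \eqref{eq:3.6} and the maximum principle.''
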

\begin{proof}
According to the monotone iteration (\cite{WuZou}), the traveling
wave solution $(u_{1}(\xi),u_{2}(\xi))^{T}$ to (\ref{eq:3.6}), (\ref{eq:3.7})
is monotone, which implies that $(u_{1}'(\xi),u_{2}'(\xi))^{T}\geq0$
for $\xi\in\mathbb{R}$. By Corollary \ref{cor:3.4}, one sees that
$(u_{1}'(\pm\infty),u_{2}'(\pm\infty))^{T}=0$. The monotonicity of
system (\ref{eq:3.6}) and the maximum principle lead to the conclusion
that $(u_{1}'(\xi),u_{2}'(\xi))^{T}>0$ for $\xi\in\mathbb{R}$.
\end{proof}
We next show the uniqueness of the traveling wave solutions.

\begin{thm}
\label{thm:3.6}Assume hypotheses [H1] to [H3]. The traveling wave solution to system (\ref{eq:3.6})-(\ref{eq:3.7}),
obtained for each wave speed $c\geq2\sqrt{1-\frac{r}{1+\epsilon_{2}}}$, with properties described in Corollary 3.4 and 3.5,
is unique up to a translation of the origin.
\end{thm}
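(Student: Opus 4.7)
The proof will proceed by the sliding method adapted for cooperative systems, leveraging the precise asymptotic decay rates established in Corollary \ref{cor:3.4} and the strict monotonicity from Corollary \ref{cor:3.5}. Let $U(\xi) = (u_1(\xi), u_2(\xi))$ and $\tilde{U}(\xi) = (\tilde{u}_1(\xi), \tilde{u}_2(\xi))$ be any two traveling wave solutions of \eqref{eq:3.6}--\eqref{eq:3.7} with the common wave speed $c$. For each $\tau \in \mathbb{R}$, define the shift $\tilde{U}_\tau(\xi) := \tilde{U}(\xi + \tau)$. The goal is to produce a unique $\tau^* \in \mathbb{R}$ with $\tilde{U}_{\tau^*} \equiv U$.

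First, I would show that $\tilde{U}_\tau \geq U$ componentwise on $\mathbb{R}$ for all $\tau$ sufficiently large. Corollary \ref{cor:3.4} guarantees that $U$ and $\tilde{U}$ share the same exponential leading order at $-\infty$ (either $e^{\frac{c-\sqrt{c^2-4\alpha}}{2}\xi}$ in the non-critical case or the critical profile $(a + d\xi)e^{\sqrt{\alpha}\xi}$), and at $+\infty$ they approach $(1, \frac{1}{1+\epsilon_2})$ at the common rate $e^{\frac{c-\sqrt{c^2+4(b-\epsilon_1)}}{2}\xi}$. Matching leading coefficients then yields $M>0$ so that $\tilde U_\tau > U$ on $(-\infty,-M]\cup[M,\infty)$ for any $\tau$ sufficiently large, while strict monotonicity (Corollary \ref{cor:3.5}) forces $\tilde U_\tau > U$ on the compact interval $[-M,M]$ once $\tau$ is large enough. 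Consequently the set $\Gamma := \{\tau : \tilde{U}_\sigma \geq U \text{ on } \mathbb{R} \text{ for every } \sigma \geq \tau\}$ is nonempty, and a symmetric argument shows $\Gamma$ is bounded below. Set $\tau^* := \inf \Gamma$, so by continuity $\tilde{U}_{\tau^*} \geq U$ on $\mathbb{R}$.

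Suppose, for a contradiction, $\tilde{U}_{\tau^*} \not\equiv U$. The difference $W := \tilde{U}_{\tau^*} - U$ satisfies a linear system obtained by subtracting the two copies of \eqref{eq:3.6}; the off-diagonal entries of the coupling matrix are nonnegative because the nonlinearity $(F_1,F_2)$ in \eqref{eq:3.22} is cooperative on the rectangle $[0,1]\times[0,\tfrac{1}{1+\epsilon_2}]$ (this is the whole point of the transformation \eqref{eq:3.5}). Applying the strong maximum principle for cooperative elliptic systems componentwise, either some component of $W$ vanishes at an interior point, which via the cooperative coupling forces $W\equiv 0$ and contradicts the assumption, or else $W > 0$ strictly on all of $\mathbb{R}$.

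The main obstacle is ruling out this second alternative. Here the precise first-order asymptotics from Corollary \ref{cor:3.4} become essential. Using them, I would show that for small $\epsilon>0$, $\tilde{U}_{\tau^*-\epsilon}$ still dominates $U$ at both tails: the perturbation induced in the leading coefficients by shifting through $\epsilon$ is of order $\epsilon$ times the common exponential envelope, while the gap $W$ also behaves like a positive multiple of that envelope at $\pm\infty$, so the gap absorbs the perturbation for small enough $\epsilon$. On the compact piece $[-M,M]$ the strict positivity $W>0$ together with continuity in $\tau$ again yields $\tilde{U}_{\tau^*-\epsilon}>U$. This contradicts the infimum property of $\tau^*$, so $\tilde U_{\tau^*}\equiv U$, which proves uniqueness up to translation. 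The proof closes by noting that uniqueness of $\tau^*$ is immediate from strict monotonicity, since any two shifts that equal $U$ must coincide.
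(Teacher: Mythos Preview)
Your sliding-method argument is essentially the paper's approach: use the shared first-order asymptotics from Corollary~3.4 to order the profiles for large shifts (the paper's inequalities (3.42)--(3.45)), decrease to the critical shift, and invoke the strong maximum principle for the cooperative difference system at a contact point. Your treatment of the contact case via the strictly positive off-diagonal couplings $ru_1>0$ and $b(\tfrac{1}{1+\epsilon_2}-u_2)>0$ is in fact a bit more direct than the paper's, which instead passes to the limiting constant-coefficient systems at $\pm\infty$ and uses the nonzero off-diagonal limits $r$ and $b/(1+\epsilon_2)$ there to propagate vanishing from one component to the other.

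One step deserves more care: the assertion that ``the gap $W$ also behaves like a positive multiple of that envelope at $\pm\infty$.'' At the infimal shift $\tau^*$ the leading exponential coefficients of $U$ and $\tilde U_{\tau^*}$ may well coincide at one tail, in which case $W$ decays \emph{strictly faster} than $\tilde U'$ there and an $O(\epsilon)$ shift is not absorbed by $W$ near that tail; your perturbation argument then breaks down. The paper finesses this by declaring that at the critical shift the profiles touch at some finite $\xi_1$ (its ``case~2''), though it does not explicitly rule out touching only at infinity either. So your argument is at the paper's level of rigor, but be aware that making this tail step airtight requires an additional ingredient (a Hopf-type argument at infinity, or an analysis showing that a nonnegative solution of the cooperative linearized system which is $o$ of the principal mode at a tail must vanish identically).
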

\begin{proof}
We only prove the conclusion for traveling wave solutions with asymptotic
behaviors \eqref{eq:3.27} and \eqref{eq:3.28}, since other case
can be proved similarly. Let $U_{1}(\xi)$ and $U_{2}(\xi)$ be traveling
wave solutions of system \eqref{eq:3.6}-\eqref{eq:3.7}, with speed $c>2\sqrt{\alpha}$ and properpties described.
There exist positive constants $A_{i}$, $B_{i}$, $i=1,2,3,4$ and
a large number $N>0$ such that for $\xi<-N$,\begin{equation}
U_{1}(\xi)=\left(\begin{array}{c}
(A_{1}+o(1))e^{\frac{c-\sqrt{c^{2}-4\alpha}}{2}\xi}\\
\\(A_{2}+o(1))e^{\frac{c-\sqrt{c^{2}-4\alpha}}{2}\xi}\end{array}\right)\label{eq:3.36}\end{equation}
 \begin{equation}
U_{2}(\xi)=\left(\begin{array}{c}
(A_{3}+o(1))e^{\frac{c-\sqrt{c^{2}-4\alpha}}{2}\xi}\\
\\(A_{4}+o(1))e^{\frac{c-\sqrt{c^{2}-4\alpha}}{2}\xi}\end{array}\right);\label{eq:3.37}\end{equation}
 and for $\xi>N$,\begin{equation}
U_{1}(\xi)=\left(\begin{array}{c}
{\displaystyle 1-(B_{1}+o(1))e^{\frac{c-\sqrt{c^{2}+4(b-\epsilon_{1})}}{2}\xi}}\\
\\{\displaystyle \frac{1}{1+\epsilon_{2}}-(B_{2}+o(1))e^{\frac{c-\sqrt{c^{2}+4(b-\epsilon_{1})}}{2}\xi}}\end{array}\right)\label{eq:3.38}\end{equation}
 \begin{equation}
U_{2}(\xi)=\left(\begin{array}{c}
{\displaystyle 1-(B_{3}+o(1))e^{\frac{c-\sqrt{c^{2}+4(b-\epsilon_{1})}}{2}\xi}}\\
\\{\displaystyle \frac{1}{1+\epsilon_{2}}-(B_{4}+o(1))e^{\frac{c-\sqrt{c^{2}+4(b-\epsilon_{1})}}{2}\xi}}\end{array}\right)\label{eq:3.39}\end{equation}
 The traveling wave solutions of system \eqref{eq:3.6} are translation
invariant, thus for any $\theta>0$, $U_{1}^{\theta}(\xi):=U_{1}(\xi+\theta)$
is also a traveling wave solution of \eqref{eq:3.6}. By \eqref{eq:3.36}
and \eqref{eq:3.38}, the solution $U_{1}(\xi+\theta)$ has the following
asymptotic behaviors:\begin{equation}
U_{1}^{\theta}(\xi)=\left(\begin{array}{c}
(A_{1}+o(1))e^{\frac{c+\sqrt{c^{2}-4\alpha}}{2}\theta}e^{\frac{c+\sqrt{c^{2}-4\alpha}}{2}\xi}\\
\\(A_{2}+o(1))e^{\frac{c+\sqrt{c^{2}-4\alpha}}{2}\theta}e^{\frac{c+\sqrt{c^{2}-4\alpha}}{2}\xi}\end{array}\right)\label{eq:3.40}\end{equation}
 for $\xi\leq-N-\theta$;\begin{equation}
U_{1}^{\theta}(\xi)=\left(\begin{array}{c}
{\displaystyle 1-(B_{1}+o(1))e^{\frac{c-\sqrt{c^{2}+4(b-\epsilon_{1})}}{2}\theta}e^{\frac{c-\sqrt{c^{2}+4(b-\epsilon_{1})}}{2}\xi}}\\
\\{\displaystyle \frac{1}{1+\epsilon_{2}}-(B_{2}+o(1))e^{\frac{c-\sqrt{c^{2}+4(b-\epsilon_{1})}}{2}\theta}e^{\frac{c-\sqrt{c^{2}+4(b-\epsilon_{1})}}{2}\xi}}\end{array}\right)\label{eq:3.41}\end{equation}
 for $\xi\geq N$.

It is clear that for $\theta$ large enough, we have \begin{equation}
A_{1}e^{\frac{c-\sqrt{c^{2}-4\alpha}}{2}\theta}>A_{3},\label{eq:3.42}\end{equation}
 \begin{equation}
A_{2}e^{\frac{c-\sqrt{c^{2}-4\alpha}}{2}\theta}>A_{4},\label{eq:3.43}\end{equation}
 \begin{equation}
B_{1}e^{\frac{c-\sqrt{c^{2}+4(b-\epsilon_{1})}}{2}\theta}<B_{3},\label{eq:3.44}\end{equation}
 \begin{equation}
B_{2}e^{\frac{c-\sqrt{c^{2}+4(b-\epsilon_{1})}}{2}\theta}<B_{4}.\label{eq:3.45}\end{equation}
 Thus for some $N>0$, formulas \eqref{eq:3.42} - \eqref{eq:3.45} imply that for $\theta$
large enough, \begin{equation}
U_{1}^{\theta}(\xi)>U_{2}(\xi)\label{eq:3.46}\end{equation}
 for $\xi\in(-\infty,-N]$$\cup$$[N+\infty).$ Here, the inequality
$">"$ in \eqref{eq:3.46} is component-wise. We now consider system
\eqref{eq:3.6} on $[-N,+N]$. First, suppose $U_{1}^{\theta}(\xi)\geq U_{2}(\xi)$
on $[-N,+N]$, then the function $W(\xi):=U_{1}^{\theta}(\xi)-U_{2}(\xi)\geq0$
and satisfies for some $\zeta\in(0,1)$: \begin{equation}
\begin{array}{ll}
W''-cW'+\left[\begin{array}{cc}
\frac{\partial F_{1}}{\partial u_{1}}(U_{2}+\zeta_{1}(U_{1}^{\theta}-U_{2})), & \frac{\partial F_{1}}{\partial u_{2}}(U_{2}+\zeta_{1}(U_{1}^{\theta}-U_{2}))\\
\\\frac{\partial F_{2}}{\partial u_{1}}(U_{2}+\zeta_{2}(U_{1}^{\theta}-U_{2})), & \frac{\partial F_{2}}{\partial u_{2}}(U_{2}+\zeta_{2}(U_{1}^{\theta}-U_{2}))\end{array}\right]W=0,\\
\\\hbox{for}\,\,\,\xi\in(-N,N),\,\,\,\,\hbox{and}\,\,\, W(-N)>0,\,\,\, W(+N)>0.\end{array}\label{eq:3.47}\end{equation}
Since the above system is monotone, we can readily deduce by maximum
principle that $W>0$, on $[-N,N]$. Consequently, we have $U_{1}^{\theta}(\xi)>U_{2}(\xi)$
on $\mathbb{R}$ in this case.

Second, suppose there are some points in $(-N,N)$ such that $U_{1}^{\theta}(\xi)<U_{2}(\xi)$.
We then increase $\theta$, that is shift $U_{1}^{\theta}(\xi)$ further
left so that $U_{1}^{\theta}(-N)>U_{2}(-N)$, $U_{1}^{\theta}(N)>U_{2}(N)$.
By the monotonicity of $U_{1}^{\theta}$ and $U_{2}$, we can find
a $\bar{\theta}\in(0,2N)$ such that in the interval $(-N,N)$, we
have $U_{1}^{\theta}(\xi+\bar{\theta})>U_{2}(\xi)$. Shifting $U_{1}^{\theta}(\xi+\bar{\theta})$
back until one component of $U_{1}^{\theta}(\xi+\bar{\theta})$ first
touches one component of $U_{2}(\xi)$ at some point $\bar{\bar{\xi}}\in(-N,N)$.
Then by maximum principle for that component again, we find that component
of $U_{1}^{\theta}$ and $U_{2}$ are identically equal for all $\xi\in[-N,N]$
for a larger $\theta$ than the original one such that \eqref{eq:3.46}
holds. This is a contradiction. Therefore, we must have\[
U_{1}^{\theta}(\xi)>U_{2}(\xi)\]
 for all $\xi\in R$, where $\theta$ is the one chosen by means of
\eqref{eq:3.42}-\eqref{eq:3.45} as described above.

Now, decrease $\theta$ until one of the following situations happens.

1. There exists $\bar{\theta}\geq0$, such that $U_{1}^{\bar{\theta}}(\xi)\equiv U_{2}(\xi)$.
In this case we have finished our proof.

2. For $\bar{\theta}\geq0$, there exists $\xi_{1}\in R$, such that
one of the components of $U^{\bar{\theta}}$ and $U_{2}$ are equal
at the point $\xi_{1}$; and for all $\xi\in\mathbb{R}$, we have
$U_{1}^{\bar{\theta}}(\xi)\geq U_{2}(\xi)$. We then consider the
system \eqref{eq:3.47} on $(-N,N)$ and $\theta=\bar{\theta}$ in
the definition for $W$. To fix ideas, we suppose that the first component
of $U_{1}^{\bar{\theta}}$ and $U_{2}$ is equal at the point $\xi_{1}$.
The maximum principle for this component implies that the first component
of $U_{1}^{\bar{\theta}}(\xi)$ is identically equal to that of $U_{2}(\xi)$.
Also, we readily obtain that for large $+\xi$, the limiting equation
for \eqref{eq:3.47} is the same as \eqref{eq:3.32} . Since the first
component of $W$ is identically zero and the off diagonal limit coefficient $r$ in the first equation in (3.32) is not equal to zero, we conclude from the first
equation in \eqref{eq:3.47} that the second component of $W$ must
vanish for all large $\xi$. By the maximum principle for the second
equation, we conclude that the second component of $W$ is also identically
zero for all $\xi\in\mathbb{R}$. Similarly, we next consider the case that the
second component of $U_{1}^{\bar{\theta}}$ and $U_{2}$ are equal
at the point $\xi_{1}$. We first obtain the limiting equation of (3.47) for large $-\xi$. The off diagonal limit coefficient of the second equation will be $\frac{b}{1+\epsilon_{2}}\neq 0$. We first deduce that the second componet of $W$  is indentically zero, and then the first component must also be identically zero for $\xi\in\mathbb{R}$.

Consequently, in either situation, there exists a $\bar{\theta}\geq0$,
such that \[
U_{1}^{\bar{\theta}}(\xi)\equiv U_{2}(\xi).\]
 for all $\xi\in\mathbb{R}$.
\end{proof}
\begin{thm}
\label{thm:3.7}Assume hypotheses [H1] to [H3]. System \eqref{eq:3.6}-\eqref{eq:3.7} does not have
strict monotonic traveling wave solution tending to $(0,0)^{T}$ as $\xi \rightarrow -\infty$ for $c<2\sqrt{\alpha}$. Here, $\alpha=1-\frac{r}{1+\epsilon_{2}}$.
\end{thm}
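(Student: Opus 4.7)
\ The plan is to argue by contradiction, applying a Liouville transformation and the Sturm oscillation theorem to the scalar equation for the first component $u_{1}$. The key observation is that at the equilibrium $(0,0)$ the partial derivative $\partial F_{1}/\partial u_{2}$ of the reaction term in \eqref{eq:3.6} vanishes (see \eqref{eq:3.22}), so the $u_{1}$-equation linearizes there to the uncoupled KPP operator $u_{1}\mapsto-u_{1}''+cu_{1}'-\alpha u_{1}$. Consequently the classical scalar non-existence argument below the minimal KPP speed $2\sqrt{\alpha}$ should apply directly to $u_{1}$, and the $u_{2}$-equation is never actually needed.

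Suppose for contradiction that $(u_{1},u_{2})$ is a strictly monotonic traveling wave of \eqref{eq:3.6} with $(u_{1},u_{2})(\xi)\to(0,0)$ as $\xi\to-\infty$ for some $c<2\sqrt{\alpha}$. Strict monotonicity of $u_{1}$ together with the left limit immediately yields $u_{1}(\xi)>0$ on $\mathbb{R}$, and similarly $u_{2}(\xi)\ge 0$ on $\mathbb{R}$. Introduce the Liouville substitution $\psi(\xi):=e^{-c\xi/2}u_{1}(\xi)$. A routine calculation rewrites the first equation of \eqref{eq:3.6} in the self-adjoint form
\[
\psi''(\xi)+Q(\xi)\psi(\xi)=0,\qquad Q(\xi):=\alpha-u_{1}(\xi)+r\,u_{2}(\xi)-\frac{c^{2}}{4}.
\]
Because $(u_{1},u_{2})(\xi)\to(0,0)$ as $\xi\to-\infty$ and $c<2\sqrt{\alpha}$, we obtain $Q(\xi)\to\alpha-c^{2}/4>0$, so there exist $\xi_{0}\in\mathbb{R}$ and $\delta>0$ with $Q(\xi)\ge\delta$ for every $\xi\le\xi_{0}$.

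Now invoke the Sturm comparison theorem: the comparison equation $\phi''+\delta\phi=0$ has consecutive zeros separated by $\pi/\sqrt{\delta}$, so $\psi$ must vanish in every subinterval of $(-\infty,\xi_{0}]$ of length at least $\pi/\sqrt{\delta}$, producing infinitely many zeros on $(-\infty,\xi_{0}]$. But $\psi(\xi)=e^{-c\xi/2}u_{1}(\xi)>0$ on $\mathbb{R}$, which is the desired contradiction.

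The main subtlety is conceptual rather than technical: one has to notice that although \eqref{eq:3.6} is a genuinely coupled $2\times 2$ system, the cross term $r u_{1}u_{2}$ in its first equation is quadratic at $(0,0)$, so $u_{2}$ enters the potential $Q$ only as a higher-order perturbation that vanishes as $\xi\to-\infty$ and cannot alter the sign of the limiting value $\alpha-c^{2}/4$. This structural decoupling at the origin is what reduces the problem to a scalar Sturm argument on $u_{1}$ alone, bypassing any spectral analysis of the $u_{2}$-equation.
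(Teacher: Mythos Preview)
Your proof is correct and takes a genuinely different route from the paper's. The paper argues via the asymptotic ODE machinery developed earlier in Corollary~\ref{cor:3.4}: it writes down the linearization of \eqref{eq:3.6} at $(0,0)$, invokes the Coddington--Levinson/Sattinger theory to conclude that any solution tending to $(0,0)$ at $-\infty$ must have the asymptotic form $A e^{\frac{c-\sqrt{c^{2}-4\alpha}}{2}\xi}+\bar A e^{\frac{c+\sqrt{c^{2}-4\alpha}}{2}\xi}+\text{h.o.t.}$, and then observes that for $0<c<2\sqrt{\alpha}$ the exponents are complex conjugates, forcing oscillation. Your argument reaches the same contradiction by a Liouville substitution $\psi=e^{-c\xi/2}u_{1}$ and the classical Sturm comparison theorem on $(-\infty,\xi_{0}]$, which is more elementary and entirely self-contained: it needs no asymptotic expansion, no exponential-dichotomy roughness, and no reference to the $u_{2}$-equation beyond the limit $u_{2}(-\infty)=0$. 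Both proofs hinge on the same structural point you identified---that the cross term $ru_{1}u_{2}$ is quadratic at $(0,0)$, so the $u_{1}$-equation decouples at leading order into the scalar KPP problem---but your route makes this decoupling completely explicit and avoids the external references. The paper's approach, on the other hand, fits naturally into the framework already built for Corollary~\ref{cor:3.4} and yields the precise asymptotic form of any hypothetical solution, not just its sign changes. Note that, like the paper, your oscillation argument uses $c^{2}<4\alpha$ rather than merely $c<2\sqrt{\alpha}$; this matches the paper's own tacit restriction to $0<c<2\sqrt{\alpha}$ in its proof.
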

\begin{proof}
Suppose there is a constant $c$ with $0<c<2\sqrt{\alpha}$ and a corresponding
solution $V(\xi)=(v_{1}(\xi),v_{2}(\xi))^{T}$ of (\ref{eq:3.6}) tending to $(0,0)^{T}$ as $\xi \rightarrow \-\infty$. Similar to the proof of Corollary \ref{cor:3.4}, we can deduce by integrating the asymptotic approximation of its derivative that the asymptotic behaviors of  $V(\xi)$ at $-\infty$ must be of the form:  

\[
\left(\begin{array}{c}
v_{1}(\xi)\\
\\v_{2}(\xi)\end{array}\right)=\left(\begin{array}{c}
A_{s}\\
\\B_{s}\end{array}\right)e^{\frac{c-\sqrt{c^{2}-4\alpha}}{2}\xi}+\left(\begin{array}{c}
\bar{A_{s}}\\
\\\bar{B_{s}}\end{array}\right)e^{\frac{c+\sqrt{c^{2}-4\alpha}}{2}\xi}+h.o.t,\]
where $(A_{s},B_{s})^{T}$and $(\bar{A_{s},}\bar{B_{s}})$ can not
be both zero, and h.o.t. is the short notation for the higher order
terms. The condition $0<c<2\sqrt{\alpha}$ implies that $V(\xi)$
is oscillatting. This says that such solution of (\ref{eq:3.6}) with
$c<2\sqrt{\alpha}$ is not monotone.\textit{ }
\end{proof}

\section{\textbf{\label{sec:4}STABILITY OF THE TRAVELING WAVES WITH NON-CRITICAL
SPEEDS}}

\setcounter{equation}{0}

In this section, we always hypotheses [H1] to [H3] for system (1.1); thus all the conditions in Remark 3.1 are satisfied for (3.6) and subsequent systems. We first show that the traveling wave solutions with
the non-critical speed obtained in Theorem \ref{thm: 3.2} is unstable
in the space of continuous function $C(\mathbb{R})\times C(\mathbb{R})$
(Please see definitions below). This motivates us to investigate the
stability in the {}``smaller'' exponentially weighted Banach spaces.
We will concentrate on the stability of the traveling waves with non-critical
wave speeds.

The following set up of the problem is standard: Let $U=(u,v)^{T}$,
$F(U)=(u_{1}(\frac{1+\epsilon_{2}-r}{1+\epsilon_{2}}-u_{1}+ru_{2}),(\frac{1}{1+\epsilon_{2}}-u_{2})(bu_{1}-\epsilon_{1}(1+\epsilon_{2})u_{2}))^{T}$
and write system \eqref{eq:3.6} in the moving coordinates $\xi=x+ct$,
$c>0$. In terms of $(\xi,t)$ variable, (\ref{eq:3.6}) is changed
into 

\begin{equation}
\left\{ \begin{array}{ccl}
U_{t} & = & U_{\xi\xi}-cU_{\xi}+F(U),\\
U(\xi,0) & = & \bar{U},\end{array}\right.\label{eq: 4.1}\end{equation}
where $\bar{U}$ the initial function. Let $U^{*}(\xi)=(u^{*}(\xi),v^{*}(\xi))^{T}$
, $\xi=x+c^{*}t$ be the traveling wave solution of (\ref{eq:3.6})
with wave speed $c^{*}>2\sqrt{\alpha}$ . It is easy to see that $U^{*}(\xi)$
is a non-constant steady state of the system \eqref{eq: 4.1}. We
consider the perturbation of this steady state solution. By letting
$U(\xi,t)=U^{*}(\xi)+V(\xi,t)$ , we obtain the system for the perturbation
function $V$:

\begin{equation}
\left\{ \begin{array}{ccl}
V_{t} & = & LV+\mathcal{N}(V,U^{*}),\\
V(\xi,0) & = & \bar{U}(\xi)-U^{*}(\xi)\end{array}\right.\label{eq: 4.2}\end{equation}
 where \begin{equation}
LV=V_{\xi\xi}-c^{*}V_{\xi}+\frac{\partial F}{\partial U}(U^{*})V\label{eq: 4.3}\end{equation}
 is a linear operator, and \begin{equation}
\mathcal{N}(V,U^{*})=F(U^{*}+V)-F(U^{*})-\frac{\partial F}{\partial U}(U^{*})V\label{eq: 4.4}\end{equation}
 is a nonlinear operator. 

The stability of the traveling wave solution $U^{*}(\xi)$ in certain
Banach space is determined by the location of the spectrum, $\sigma(L)$, of $L$.

Let
$$ 
\sigma_{p}(L)= \,\{\lambda \in \sigma(L)\,|\,\lambda\,\, is\, an\,\, eigenvalue\,\, of\, L\},
$$
and $\sigma_{e}(L)$ be the essential spectrum of $L$, which are points in $\sigma(L)$ outside $\sigma_{p}(L)\cap \{isolated \,\, eigenvalues\,\,of\,\,$L$\,\, with \,\,finite\,\,multiplicity\}$. Note that $\sigma_{e}(L)$ includes the continuous spectrum of $L$.
\cite{AlexanderGardnerJones}, \cite{Henry},
\cite{Volpert}. Let $C(\mathbb{R})$ be the space of all continuous
functions on the real line and $C_{0}(\mathbb{R})$ be its subspace 

\[
C_{0}(\mathbb{R}):=\{U(\xi)\in C(\mathbb{R})\times C(\mathbb{R})\,|\,\lim_{|\xi|\rightarrow+\infty}U(\xi)=0\}\]

\noindent along with norm\[
\parallel U\parallel_{C_{0}(\mathbb{R})}:=\sup_{\xi\in\mathbb{R}}\parallel U(\xi)\parallel.\]

\noindent We also need the following weighted Banach spaces: for non-negative
numbers $\sigma_{1}$, $\sigma_{2}$, the space $C_{\sigma_{1},\sigma_{2}}$
is defined as:

\[
C_{\sigma_{1},\sigma_{2}}=\{U(\xi)\in C_{0}(\mathbb{R})\mid U(\xi)(e^{\sigma_{1}\xi}+e^{-\sigma_{2}\xi})\in C_{0}(\mathbb{R})\},\]
 on which we define the norm

\[
\parallel U\parallel_{C_{\sigma_{1},\sigma_{2}}}=\sup_{\xi\in\mathbb{R}}\parallel U(\xi)(e^{\sigma_{1}\xi}+e^{-\sigma_{2}\xi})\parallel.\]

\noindent Similarly, we can define $C_{\sigma_{1},\sigma_{2}}^{(i)}$,
$i=1,2,...$ as well, for example:\[
C_{\sigma_{1},\sigma_{2}}^{(2)}=\{U|\,\, U(\xi),U'(\xi),U''(\xi)\in C_{\sigma_{1},\sigma_{;2}}\:\mbox{and\, }\xi\in\mathbb{R}\}\]
 with norm\[
\left\Vert U\right\Vert _{C_{\sigma_{1},\sigma_{2}}^{(2)}}=\sup_{\xi\in\mathbb{R}}\,\,\Sigma_{i=0}^{2}\left\Vert (e^{\sigma_{1}\xi}+e^{-\sigma_{2}\xi})\frac{d^{i}U(\xi)}{d\xi^{i}}\right\Vert .\]
 It can be readily verified that these spaces are Banach spaces. 

\begin{thm}
\textbf{\label{-Theorem-4.1}}Assume [H1] to [H3], and let $U^{*}(\xi)= (u_{1}(\xi),u_{2}(\xi))^{T}$ be the traveling wave solution of (3.6) or (4.1) with wave speed $c^{*}>2\sqrt{\alpha},\,\alpha = 1-\frac{r}{1+\epsilon_{2}}$ as described in Corollary 3.4. Then $U^{*}(\xi)$ is unstable with initial conditions
in $C_{0}$. 
\end{thm}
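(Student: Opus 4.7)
The plan is to identify a point with positive real part in the essential spectrum of $L$ on $C_0(\mathbb{R})\times C_0(\mathbb{R})$, and then promote this spectral obstruction into a genuine dynamical instability of $U^{*}$ in $C_0$.

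As a first step, I would compute the Jacobian $\partial F/\partial U$ at the two asymptotic limits of $U^{*}$. At $\xi=-\infty$, where $U^{*}\to(0,0)^T$, a direct calculation yields the lower triangular matrix
\[
A_{-}=\begin{pmatrix} \alpha & 0 \\ \tfrac{b}{1+\epsilon_{2}} & -\epsilon_{1}\end{pmatrix},
\]
whose eigenvalues are $\alpha=1-\tfrac{r}{1+\epsilon_{2}}>0$ and $-\epsilon_{1}<0$. At $\xi=+\infty$, where $U^{*}\to (1,\tfrac{1}{1+\epsilon_{2}})^T$, a similar computation gives the upper triangular matrix
\[
A_{+}=\begin{pmatrix} -1 & r \\ 0 & \epsilon_{1}-b \end{pmatrix},
\]
with eigenvalues $-1$ and $\epsilon_{1}-b$ both negative by Remark 3.1. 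The positive eigenvalue $\alpha$ of $A_{-}$ is the ``unstable component'' alluded to in the introduction.

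Next, by Corollary 3.4 the coefficient of $L$ approaches $A_{\pm}$ exponentially fast, so on $C_0$ the essential spectrum of $L$ is controlled by the Fourier symbols of the limiting constant-coefficient operators $L_{\pm}V=V_{\xi\xi}-c^{*}V_{\xi}+A_{\pm}V$. Standard results for asymptotically autonomous second-order elliptic operators (see Henry, Kapitula--Promislow, or Volpert) give
\[
\sigma_{e}(L)\;\supseteq\;\bigcup_{\pm}\bigcup_{\mu\in\sigma(A_{\pm})}\{\mu-k^{2}-ic^{*}k:\,k\in\mathbb{R}\}.
\]
The branch associated with $(-\infty,\,\mu=\alpha)$ is the parabola $\lambda=\alpha-k^{2}-ic^{*}k$, whose rightmost point $\lambda=\alpha>0$ is attained at $k=0$. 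Hence $\sigma_{e}(L)$ strictly intrudes into the right half-plane.

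Finally, I would convert this spectral obstruction into a dynamical instability in $C_0$. Because $L$ is sectorial, the spectral radius of $e^{Lt}$ is at least $e^{\alpha t}$, so the linearization $V_{t}=LV$ is already unstable on $C_0$. To exhibit an explicit growing perturbation, I exploit the lower triangular structure of $A_{-}$: near $\xi=-\infty$, the first component of $LV$ decouples into the scalar operator $\phi\mapsto\phi_{\xi\xi}-c^{*}\phi_{\xi}+\alpha\phi$ up to a lower-order perturbation that decays exponentially in $|\xi|$. Taking as initial data a small bump $\bar U-U^{*}=(\varepsilon\eta(\xi-\xi_{0}),\,0)^{T}$ with $\eta\in C_{c}^{\infty}$ and $\xi_{0}$ sufficiently negative, the scalar Green's function forces the first component of $V$ to reach size of order $\varepsilon e^{\alpha t}$ over a time window $t\lesssim \alpha^{-1}\log(1/\varepsilon)$, before the nonlinear term $\mathcal{N}(V,U^{*})$ in (4.2) becomes comparable. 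Combined with the sectoriality of $L$ and the standard bootstrapping argument of Henry, this yields the nonlinear instability of $U^{*}$ in $C_0$. The main obstacle will be making rigorous the passage from essential-spectrum instability to actual $C_0$-norm growth, since no classical eigenfunction of $L$ is available; I anticipate handling it by a Weyl-sequence / frozen-coefficient comparison based on the triangular form of $A_{-}$.
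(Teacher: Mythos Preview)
Your spectral computation coincides with the paper's proof: both compute the limiting Jacobians $A_{\pm}=\partial F/\partial U(U^{*}_{\pm})$, form the Fourier-symbol parabolas $\lambda=\mu-k^{2}-ic^{*}k$ for each eigenvalue $\mu$ of $A_{\pm}$, and observe that the branch $\mu=\alpha$ coming from $A_{-}$ places $\alpha>0$ in $\sigma_{e}(L)$. The difference is in what happens next. The paper simply stops at $\max\mathrm{Re}\,\sigma_{e}(L)\ge\alpha>0$ and invokes Henry \cite{Henry} to declare $U^{*}$ ``essentially unstable in $C_{0}(\mathbb{R})$''; no dynamical argument is given or needed. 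Your proposed second half---promoting the spectral obstruction to genuine $C_{0}$-norm growth via bump initial data, scalar Green's-function estimates, and a Weyl-sequence/frozen-coefficient comparison---goes well beyond what the paper proves and beyond what the theorem as stated requires. That extra work would buy a quantitative growth rate and an explicit unstable direction, but it also introduces exactly the obstacle you identify (no eigenfunction, only approximate eigenfunctions from the essential spectrum). The paper's route avoids this entirely by treating the right-half-plane essential spectrum itself as the instability statement and deferring the linear-to-nonlinear passage to standard semigroup theory.
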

\begin{proof}
This theorem holds for the traveling wave solutions with critical
and non-critical wave speeds. We need to prove the trivial solution
of (\ref{eq: 4.2}) is unstable. Thus, it suffices to show that in
the space $C_{0}$ the operator $L$ in (\ref{eq: 4.3}) has essential
spectrum with positive real part. As is well known (\cite{Henry},
\cite{Volpert}) the location of the continuous spectrum of the operator
$L$ is bounded by the spectrum of $L$ at $\pm\infty$, which we
denote by $L^{+}$ and $L^{-}$ respectively. More precisely, we let
\begin{equation}
\begin{array}{lll}
L^{+}V & = & V_{\xi\xi}-c^{*}V_{\xi}+{\displaystyle \frac{\partial F}{\partial U}(U_{+}^{*})V}\\
\\ & = & V_{\xi\xi}-c^{*}V_{\xi}+\left[\begin{array}{cc}
-{\displaystyle 1} & r\\
\\0 & \epsilon_{1}-b\end{array}\right]V,\end{array}\label{eq: 4.5}\end{equation}
 \begin{equation}
\begin{array}{lll}
L^{-}V & = & V_{\xi\xi}-c^{*}V_{\xi}+{\displaystyle \frac{\partial F}{\partial U}(U_{-}^{*})V}\\
\\ & = & V_{\xi\xi}-c^{*}V_{\xi}+\left[\begin{array}{cc}
1-\frac{r}{1+\epsilon_{2}} & 0\\
\\\frac{b}{1+\epsilon_{2}} & -\epsilon_{1}\end{array}\right]V.\end{array}\label{eq: 4.6}\end{equation}
 Here, $U_{\pm}^{*}$ respectively denote the limit of $U^{*}(\xi)$
as $\xi\rightarrow\pm\infty$. 

Now consider the equation \[
\frac{\partial V}{\partial t}=L^{+}V.\]
 Following \cite{Volpert} and \cite{Henry}, we replace $V$ by $e^{(\lambda t+i\zeta\xi)}I,$
where $I$ is an identity matrix and $\lambda$ is a complex number
and $\zeta$ is real. We then have \begin{equation}
e^{(\lambda t+i\zeta\xi)}(-\zeta^{2}I-c^{*}\zeta iI+\frac{\partial F}{\partial U}(U_{+}^{*})-\lambda I)=0.\label{eq: 4.7}\end{equation}

The spectrum of the operator $L^{+}$consists of curves given by:
\begin{equation}
\det(-\zeta^{2}I-c^{*}\zeta iI+\frac{\partial F}{\partial U}(U_{+}^{*})-\lambda I)=0.\label{eq: 4.8}\end{equation}
 Solving \eqref{eq: 4.8}, we have \begin{equation}
-\zeta^{2}-c^{*}\zeta i-1-\lambda=0,\label{eq: 4.9}\end{equation}
 or \begin{equation}
-\zeta^{2}-c^{*}\zeta i+\epsilon_{1}-b-\lambda=0.\label{eq: 4.10}\end{equation}
 Letting $\lambda=x+yi$ for $x,y\in\mathbb{R}$, then by (\ref{eq: 4.9})
we have \begin{equation}
x=-\frac{y^{2}}{(c^{*})^{2}}-1,\label{eq: 4.11}\end{equation}
 or by (\ref{eq: 4.10}), \begin{equation}
x=-\frac{y^{2}}{(c^{*})^{2}}+\epsilon_{1}-b.\label{eq: 4.12}\end{equation}

\noindent Similarly, the spectrum of $L^{-}$ consists of
curves: \begin{equation}
x=-\frac{y^{2}}{(c^{*})^{2}}+1-\frac{r}{1+\epsilon_{2}},\label{eq: 4.13}\end{equation}
or \begin{equation}
x=-\frac{y^{2}}{(c^{*})^{2}}-\epsilon_{1}\label{eq: 4.14}\end{equation}
in the complex plane. Consequently, by theory described in \cite{Henry}, we have

\[
\max Re\,\sigma_{e}(L)\ge \max\{-1,\,\epsilon_{1}-b,\,1-\frac{r}{1+\epsilon_{2}},\,-\epsilon_{1}\}=1-\frac{r}{1+\epsilon_{2}}>0.\]

Hence, by \cite{Henry} again, the traveling wave solution $U^{*}(\xi)$
of (\ref{eq: 4.1}) is essentially unstable in $C_{0}(\mathbb{R})$. 
\end{proof}
\vspace{5pt}
 In order to obtain stability for the traveling solution $U^{*}$,
we will restrict the initial conditions and the operator $L$ to a
\char`\"{}smaller\char`\"{} Banach space $C_{\sigma_{1},\sigma_{2}}$
with $\sigma_{1}\geq0$, $\sigma_{2}\geq0$ and $\sigma_{1}^{2}+\sigma_{2}^{2}\neq0$.
To relate the operator $L$ in $C_{0}(\mathbb{R})$ to an
equilvalent operator in $C_{\sigma_{1},\sigma_{2}}$, we introduce
the mapping $T: C_{\sigma_{1}\sigma_{2}} \rightarrow C_{0}$ as follows:
\begin{equation}
TV:=(e^{\sigma_{1}\xi}+e^{-\sigma_{2}\xi})V.\label{eq:4.15}\end{equation}
$T$ is thus linear, bounded and has a bounded inverse $T^{-1}:C_{0}\rightarrow C_{\sigma_{1},\sigma_{2}}$
with $T^{-1}V=(e^{\sigma_{1}\xi}+e^{-\sigma_{2}\xi})^{-1}V$. Consider
operator \begin{equation}
\tilde{L}:V=TLT^{-1}V.\label{eq: 4.16}\end{equation}
One readily sees that $\tilde{L}$ is a linear operator
with domain $C^{(2)}(\mathbb{R})\times C^{(2)}(\mathbb{R})$. By relation
(\ref{eq: 4.16}), considering $L$ in $C_{\sigma_{1},\sigma_{2}}$
is equivalent to considering $\tilde{L}$ in $C_{0}(\mathbb{R})$,
which is: \begin{equation}
\tilde{L}V=V_{\xi\xi}-(2g_{1}+c^{*})V_{\xi}+(2g_{1}^{2}-g_{2}+c^{*}g_{1}+\frac{\partial F}{\partial U}(U^{*}))V,\label{eq: 4.17}\end{equation}
where $(2g_{1}+c^{*})$, $2g_{1}^{2}-g_{2}+c^{*}g_{1}$ in (\ref{eq: 4.17})
are short notions for the matrices $(2g_{1}+c^{*})I$ and $M(\xi)\doteq(2g_{1}^{2}-g_{2}+c^{*}g_{1})I$,
where\[
\begin{array}{lll}
g_{1}(\xi) & = & {\displaystyle \frac{\sigma_{1}e^{\sigma_{1}\xi}-\sigma_{2}e^{-\sigma_{2}\xi}}{e^{\sigma_{1}\xi}+e^{-\sigma_{2}\xi}}},\\
\\g_{2}(\xi) & = & {\displaystyle \frac{\sigma_{1}^{2}e^{\sigma_{1}\xi}+\sigma_{2}^{2}e^{-\sigma_{2}\xi}}{e^{\sigma_{1}\xi}+e^{-\sigma_{2}\xi}}},\end{array}\]
 with \[
\begin{array}{ll}
\lim_{\xi\rightarrow\infty}g_{1}(\xi)=\sigma_{1},\quad & \lim_{\xi\rightarrow-\infty}g_{1}(\xi)=-\sigma_{2};\\
\\\lim_{\xi\rightarrow\infty}g_{2}(\xi)=\sigma_{1}^{2}, & \lim_{\xi\rightarrow-\infty}g_{2}(\xi)=\sigma_{2}^{2}.\end{array}\]
 We now locate the essential spectrum of the operator $\tilde{L}$
in the space $C_{0}(\mathbb{R})$. 

\begin{lem}
\textbf{\label{-Lemma-4.2}}Suppose $\sigma_{1}$ and $\sigma_{2}$
satisfying

\begin{equation}
\begin{array}{c}
0\leq\sigma_{1}<\frac{-c^{*}+\sqrt{c^{*2}+4(b-\epsilon_{1})}}{2},\\
\\0 < \frac{c^{*}-\sqrt{c^{*2}-4(1-\frac{r}{1+\epsilon_{2}})}}{2}<\sigma_{2}<\frac{c^{*}+\sqrt{c^{*2}-4(1-\frac{r}{1+\epsilon_{2}})}}{2};\end{array}\label{eq:4.18}\end{equation}
then the essential spectrum of the operator $\tilde{L}$
in the space $C_{0}(\mathbb{R})$ is contained
in some closed sector in the left half complex plane with vertex on
the horizontal axis left of the origin. Outside this sector, there
are only a finite number of eigenvalues of $\tilde{L}$. 
\end{lem}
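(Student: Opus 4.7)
The plan is to invoke the standard asymptotic theorem for essential spectra of second-order systems on $\mathbb{R}$, as formulated in \cite{Henry} and \cite{Volpert}. The first step is to identify the asymptotic constant-coefficient operators. From the explicit formulas for $g_{1},g_{2}$ displayed above (with their stated limits $\sigma_{1},\sigma_{1}^{2}$ at $+\infty$ and $-\sigma_{2},\sigma_{2}^{2}$ at $-\infty$), together with the values of $\frac{\partial F}{\partial U}(U^{*}_{\pm})$ already recorded in \eqref{eq: 4.5}--\eqref{eq: 4.6}, I would compute
\[
\tilde{L}^{+}V = V_{\xi\xi}-(2\sigma_{1}+c^{*})V_{\xi}+\Bigl[(\sigma_{1}^{2}+c^{*}\sigma_{1})I+\begin{pmatrix}-1 & r\\ 0 & \epsilon_{1}-b\end{pmatrix}\Bigr]V,
\]
\[
\tilde{L}^{-}V = V_{\xi\xi}-(c^{*}-2\sigma_{2})V_{\xi}+\Bigl[(\sigma_{2}^{2}-c^{*}\sigma_{2})I+\begin{pmatrix}1-\tfrac{r}{1+\epsilon_{2}} & 0\\ \tfrac{b}{1+\epsilon_{2}} & -\epsilon_{1}\end{pmatrix}\Bigr]V.
\]
Because $U^{*}(\xi)$ approaches its limits exponentially by Corollary \ref{cor:3.4} and $g_{1},g_{2}$ do so by inspection of their closed forms, the difference $\tilde{L}-\tilde{L}^{\pm}$ has coefficients vanishing at $\pm\infty$, which is exactly the hypothesis needed to invoke the asymptotic-limit theorem.

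The next step is to extract the dispersion curves bounding $\sigma_{e}(\tilde{L})$. Substituting $V=Ie^{\lambda t+i\zeta\xi}$ into $V_{t}=\tilde{L}^{\pm}V$ and using triangularity of both matrices above, one obtains four parabolic branches parametrized by $\zeta\in\mathbb{R}$. Writing $\lambda=x+iy$, the rightmost points of these branches (attained at $\zeta=0$) are
\[
x_{1}^{+}=\sigma_{1}^{2}+c^{*}\sigma_{1}-1,\qquad x_{2}^{+}=\sigma_{1}^{2}+c^{*}\sigma_{1}+\epsilon_{1}-b,
\]
\[
x_{1}^{-}=\sigma_{2}^{2}-c^{*}\sigma_{2}+1-\tfrac{r}{1+\epsilon_{2}},\qquad x_{2}^{-}=\sigma_{2}^{2}-c^{*}\sigma_{2}-\epsilon_{1},
\]
and each parabola opens strictly leftward (since $2\sigma_{1}+c^{*}>0$ and $c^{*}-2\sigma_{2}\neq 0$ under \eqref{eq:4.18}). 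The first inequality in \eqref{eq:4.18} is exactly $x_{2}^{+}<0$, and the second is exactly $x_{1}^{-}<0$, the latter being meaningful only because the non-critical condition $c^{*}>2\sqrt{\alpha}$ makes the discriminant positive. The remaining two vertex values are then automatic: Remark 3.1 yields $b-\epsilon_{1}<1-\tfrac{r}{1+\epsilon_{2}}<1$, so $x_{1}^{+}<x_{2}^{+}<0$, while $\epsilon_{1}>0$ and $1-\tfrac{r}{1+\epsilon_{2}}>0$ give $x_{2}^{-}<x_{1}^{-}<0$. All four parabolas therefore sit strictly in the left half-plane and can be enclosed in a single closed sector whose vertex lies on the negative real axis slightly to the right of $\max_{i,j}x_{j}^{\pm}$.

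For the finiteness of spectrum outside this sector, I would appeal to the general Weyl-type argument used in \cite{Henry} and \cite{AlexanderGardnerJones}: the operator $\tilde{L}$ is a relatively compact perturbation (with coefficient difference vanishing at infinity) of a piecewise-asymptotic operator whose spectrum equals the region traced out by the four parabolas. Weyl's essential-spectrum theorem then forces $\sigma_{e}(\tilde{L})$ into that same region, and any point of $\sigma(\tilde{L})$ outside the sector must be an isolated eigenvalue of finite algebraic multiplicity; a resolvent-compactness argument on bounded subsets of the sector's complement prevents accumulation, leaving only finitely many such eigenvalues. The main obstacle I anticipate is bookkeeping rather than conceptual: one must verify that the matrix-valued coefficient $M(\xi)$ genuinely converges at $\pm\infty$ with sufficient control (combining the exponential rates from Corollary \ref{cor:3.4} with the elementary limits of $g_{1},g_{2}$) and that the stated bounds in \eqref{eq:4.18} are in fact sharp for the two governing parabolas; the two auxiliary parabolas are then easy consequences of Remark 3.1.
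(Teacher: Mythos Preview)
Your proposal is correct and follows essentially the same route as the paper: compute the asymptotic operators $\tilde{L}^{\pm}$, read off the four parabolic dispersion branches from the triangular matrices $M^{\pm}$, verify that the rightmost vertices $\sigma_{1}^{2}+c^{*}\sigma_{1}-1$, $\sigma_{1}^{2}+c^{*}\sigma_{1}+\epsilon_{1}-b$, $\sigma_{2}^{2}-c^{*}\sigma_{2}+1-\tfrac{r}{1+\epsilon_{2}}$, $\sigma_{2}^{2}-c^{*}\sigma_{2}-\epsilon_{1}$ are all negative under \eqref{eq:4.18}, and then invoke the general Weyl/Volpert theory. Your write-up is in fact more explicit than the paper's (which simply says ``a simple calculation shows the number above is negative''), since you identify which two of the four inequalities are forced directly by \eqref{eq:4.18} and which two follow automatically from Remark~3.1; one small caveat is that $c^{*}-2\sigma_{2}$ can vanish within the range \eqref{eq:4.18} (at $\sigma_{2}=c^{*}/2$), in which case the corresponding branch degenerates to a leftward half-line rather than a parabola, but this does not affect the conclusion.
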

\begin{proof}
As in the proof of Theorem \ref{-Theorem-4.1}, we first study the
opearator $\tilde{L}$ at infinity. We have \begin{equation}
\tilde{L}^{+}V=V_{\xi\xi}-(2\sigma_{1}+c^{*})V_{\xi}+(\sigma_{1}^{2}+c^{*}\sigma_{1}+\frac{\partial F}{\partial U}(U^{*}))V,\label{eq: 4.19}\end{equation}
 \begin{equation}
\tilde{L}^{-}V=V_{\xi\xi}-(-2\sigma_{2}+c^{*})V_{\xi}+(\sigma_{2}^{2}-c^{*}\sigma_{2}+\frac{\partial F}{\partial U}(U^{*}))V,\label{eq: 4.20}\end{equation}
 where $\sigma_{1}^{2}+c^{*}\sigma_{1}+\frac{\partial F}{\partial U}(U^{*})$
and $\sigma_{2}^{2}-c^{*}\sigma_{2}+\frac{\partial F}{\partial U}(U^{*})$
correspond respectively to the matrices: \begin{equation}
M^{+}=\left[\begin{array}{cc}
\sigma_{1}^{2}+c^{*}\sigma_{1}-1 & r\\
\\0 & \sigma_{1}^{2}+c^{*}\sigma_{1}+\epsilon_{1}-b\end{array}\right]\label{eq:4.21}\end{equation}
 and \begin{equation}
M^{-}=\left[\begin{array}{cc}
\sigma_{2}^{2}-c^{*}\sigma_{2}+1-\frac{r}{1+\epsilon_{2}} & 0\\
\\\frac{b}{1+\epsilon_{2}} & \sigma_{2}^{2}-c^{*}\sigma_{2}-\epsilon_{1}\end{array}\right].\label{eq:4.22}\end{equation}

Similar to the proof of Theorem \ref{-Theorem-4.1}, we find the right most points of the corresponding parabolas are on the horizontal axis given by

\begin{equation}
\max.\{\sigma_{1}^{2}+c^{*}\sigma_{1}-1,\,\sigma_{1}^{2}+c^{*}\sigma_{1}+\epsilon_{1}-b,\sigma_{2}^{2}-c^{*}\sigma_{2}{\displaystyle +1-\frac{r}{1+\epsilon_{2}},\,\sigma_{2}^{2}-c^{*}\sigma_{2}-\epsilon_{1}\}}.\label{eq: 4.23}\end{equation}
 A simple calculation shows the number above is negative by the choice of $\sigma_{1}$ and $\sigma_{2}$ in (4.18). Thus by the theory in \cite{Volpert}, the essential spectrum of $\tilde{L}$ is contained in a closed sector in the left complex plane with vertex on the horizontal axis left of the origin.
Moreover, we may choose this sector with the further property that outside it there is a finite number of eigenvalues of $\tilde{L}$.
\end{proof}
\begin{cor}
\textbf{\label{cor:4.3}} Assuming all the hypotheses of Lemma \ref{-Lemma-4.2}
and $\sigma_{1},\,\sigma_{2}$ satisfying (\ref{eq:4.18}), the essential
spectrum of the operator $L$ in the space $C_{\sigma_{1},\sigma_{2}}$ is contained
in some closed sector in the left half complex plane with vertex on
the horizontal axis left of the origin. Outside this sector, there
are only a finite number of eigenvalues of $L$. 
\end{cor}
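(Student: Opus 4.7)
The plan is to obtain Corollary 4.3 as a direct consequence of Lemma \ref{-Lemma-4.2} by transporting the spectral picture of $\tilde{L}$ on $C_0(\mathbb{R})$ back to $L$ on $C_{\sigma_1,\sigma_2}$ through the similarity relation $\tilde{L} = T L T^{-1}$ recorded in (\ref{eq: 4.16}). First I would verify that the multiplication operator $T$ defined by $TV = (e^{\sigma_1\xi} + e^{-\sigma_2\xi})V$ in (\ref{eq:4.15}) is a bounded linear bijection $C_{\sigma_1,\sigma_2} \to C_0(\mathbb{R})$ with bounded inverse $T^{-1}V = (e^{\sigma_1\xi}+e^{-\sigma_2\xi})^{-1}V$. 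This is immediate from the two norm definitions: by construction $\|TV\|_{C_0(\mathbb{R})} = \|V\|_{C_{\sigma_1,\sigma_2}}$, so $T$ is in fact an isometric isomorphism of Banach spaces, and it restricts to an isomorphism between the corresponding $C^{(2)}$-domains of $L$ and $\tilde{L}$.

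Next I would invoke the standard fact that similarity by a bounded invertible operator preserves the spectrum. For any $\lambda \in \mathbb{C}$, the identity $\lambda I - \tilde{L} = T(\lambda I - L)T^{-1}$ on the appropriate domains shows that $\lambda I - L$ is bijective with bounded inverse on $C_{\sigma_1,\sigma_2}$ if and only if $\lambda I - \tilde{L}$ is bijective with bounded inverse on $C_0(\mathbb{R})$. Hence $\sigma(L) = \sigma(\tilde{L})$. The same argument, applied to kernels and generalized kernels, shows that $T$ maps the (generalized) eigenspace of $L$ at $\lambda$ isomorphically onto that of $\tilde{L}$ at $\lambda$, and that $\lambda$ is isolated in $\sigma(L)$ if and only if it is isolated in $\sigma(\tilde{L})$. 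Consequently the decompositions $\sigma_p(L) = \sigma_p(\tilde{L})$ and $\sigma_e(L) = \sigma_e(\tilde{L})$ hold, with matching algebraic multiplicities of isolated eigenvalues.

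Combining these observations with Lemma \ref{-Lemma-4.2}, which places $\sigma_e(\tilde{L})$ inside a closed sector in the left half plane with vertex on the negative real axis and leaves only finitely many eigenvalues of $\tilde{L}$ outside, the same statement transfers verbatim to $L$ acting on $C_{\sigma_1,\sigma_2}$, which is exactly the conclusion of the corollary. There is no genuine obstacle in this argument; the entire substantive work was already carried out in Lemma \ref{-Lemma-4.2}, and the only thing to check here is that $T$ is a Banach space isomorphism and that similarity by such a $T$ preserves essential and point spectra together with their multiplicities.
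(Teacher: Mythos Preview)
Your proposal is correct and follows exactly the paper's approach: the paper's own proof is the one-liner ``The conclusion follows immediately from Lemma \ref{-Lemma-4.2} and relation (\ref{eq: 4.16}),'' and you have simply spelled out why the similarity $\tilde{L}=TLT^{-1}$ with $T$ a Banach space isomorphism transfers the essential and point spectra from $\tilde{L}$ on $C_0$ to $L$ on $C_{\sigma_1,\sigma_2}$. There is nothing missing or different; your elaboration is a faithful expansion of the paper's intended argument.
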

\begin{proof}
The conclusion follows immediately from Lemma \ref{-Lemma-4.2} and
relation (\ref{eq: 4.16}). 
\end{proof}
\medskip{}

Having established the location of the essential spectrum of the operator
$L$ in the space $C_{\sigma_{1},\sigma_{2}}$, we next study the
location of its eigenvalues. We first note that from Corollary 3.4, for $c > 2\sqrt{\alpha}$, we have $(U^{*}(\xi))'(e^{\sigma_{1}\xi}+e^{-\sigma_{2}\xi})$
is unbounded as $\xi\rightarrow-\infty$, which is different from
the situations met in \cite{BatesChen}, \cite{XuZhao}, therefore
their methods can not be carried over to our case. 

\begin{lem}
\textbf{\label{-Lemma-4.4.}} Let $\sigma_{1}$ and $\sigma_{2}$ satisfy (4.18). Then $0$ is not an eigenvalue of the operator $L$ in the space $C_{\sigma_{1}\sigma_{2}}(\mathbb{R})$.
\end{lem}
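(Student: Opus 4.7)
The plan is to show that the only element of $\ker L \cap C^{(2)}_{\sigma_1,\sigma_2}$ is the zero function, by combining a sharp asymptotic classification of $\ker L$ at $\pm\infty$ with a sliding/strong-maximum-principle argument against the positive kernel element $(U^*)'$.

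First I would pin down the asymptotics of elements of $\ker L$. Since $L-L^{\pm}$ decays exponentially at the corresponding infinity (by Corollary 3.4), exponential dichotomy and its roughness ensure that every $V\in\ker L$ has, at each infinity, a leading exponential rate equal to one of the four characteristic exponents of $L^{\pm}$. The upper-triangular $L^{+}$ of (4.5) produces the exponents $\mu_{1,2}^{+}=\tfrac{c^{*}\pm\sqrt{c^{*2}+4}}{2}$ and $\mu_{3,4}^{+}=\tfrac{c^{*}\pm\sqrt{c^{*2}+4(b-\epsilon_{1})}}{2}$, and the upper bound on $\sigma_{1}$ in (4.18) admits only the two negative roots $\mu_{2}^{+},\mu_{4}^{+}$. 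The lower-triangular $L^{-}$ of (4.6) produces $\nu_{1,2}^{-}=\tfrac{c^{*}\pm\sqrt{c^{*2}-4\alpha}}{2}$ (both positive, since $c^{*}>2\sqrt{\alpha}$) and $\nu_{3,4}^{-}=\tfrac{c^{*}\pm\sqrt{c^{*2}+4\epsilon_{1}}}{2}$; the lower bound $\sigma_{2}>\nu_{2}^{-}$ in (4.18) rules out the slow mode $\nu_{2}^{-}$ (as well as the negative $\nu_{4}^{-}$), admitting only $\nu_{1}^{-}$ and $\nu_{3}^{-}$.

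Next I would exploit the cooperative structure and the translation eigenfunction. Differentiating (3.6) shows $(U^{*})'\in\ker L$; by Corollary 3.5 it is componentwise strictly positive on $\mathbb{R}$, and by Corollary 3.4 its $-\infty$-exponent is exactly the forbidden $\nu_{2}^{-}$, so $(U^{*})'\notin C_{\sigma_{1},\sigma_{2}}$. Moreover, the off-diagonal entries $ru_{1}^{*}(\xi)$ and $b\bigl(\tfrac{1}{1+\epsilon_{2}}-u_{2}^{*}(\xi)\bigr)$ of $\partial F/\partial U(U^{*})$ are strictly positive on $\mathbb{R}$, making $L$ a cooperative, irreducible linear operator, so the strong maximum principle holds for $LW=0$: a componentwise non-negative solution with one component vanishing at a finite interior point must be identically zero. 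Given any $V\in\ker L\cap C^{(2)}_{\sigma_{1},\sigma_{2}}$, I would set $\lambda^{*}:=\sup\{V_{i}(\xi)/(u_{i}^{*})'(\xi):\xi\in\mathbb{R},\ i=1,2\}$, which is finite because the admissible $V$-exponents at $-\infty$ strictly dominate $\nu_{2}^{-}$ (so the ratio vanishes at $-\infty$) and because $V_{i}$ is at worst of the same $\mu_{4}^{+}$-order as $(u_{i}^{*})'$ at $+\infty$. If $\lambda^{*}\leq 0$, then $V\leq 0$ componentwise, and applying the same argument to $-V$ yields $V\equiv 0$. Otherwise $W:=\lambda^{*}(U^{*})'-V\geq 0$ lies in $\ker L$, and if $\lambda^{*}$ is attained at a finite $\xi_{0}$ the strong maximum principle forces $W\equiv 0$, i.e.\ $V=\lambda^{*}(U^{*})'$, contradicting $V\in C_{\sigma_{1},\sigma_{2}}$ versus $(U^{*})'\notin C_{\sigma_{1},\sigma_{2}}$.

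The main technical obstacle will be the boundary case where $\lambda^{*}$ is only approached in the limit $\xi\to+\infty$ without being attained at any finite $\xi$; this can occur because $V$ and $(U^{*})'$ may share the common leading $\mu_{4}^{+}$-mode at $+\infty$. I plan to handle it by an asymptotic cancellation step: if $\lim_{\xi\to+\infty}V_{i}/(u_{i}^{*})'=\lambda^{*}$ for some $i$, then the $\mu_{4}^{+}$-eigenvector of $L^{+}$ forces the same limit for both components, so $\tilde V:=V-\lambda^{*}(U^{*})'$ lies in $\ker L$ with its $\mu_{4}^{+}$-mode cancelled at $+\infty$, now decaying at the strictly faster rate $\mu_{2}^{+}$. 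Repeating the sliding comparison for $\tilde V$ against $(U^{*})'$ then yields an interior contact point (since $\tilde V_{i}/(u_{i}^{*})'\to 0$ at $+\infty$), the strong maximum principle produces $\tilde V=c(U^{*})'$, and hence $V=(\lambda^{*}+c)(U^{*})'$ --- which the weight condition forces to be zero. Careful execution of this limit-of-supremum step, leaning on the fine asymptotic expansion from Corollary 3.4 at $+\infty$, is the crux of the argument.
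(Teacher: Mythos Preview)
Your strategy shares the paper's core ingredients --- the strictly positive kernel element $(U^{*})'$, the cooperative structure of $\partial F/\partial U(U^{*})$, and the strong maximum principle --- but the execution differs. The paper does not work with $\lambda^{*}=\sup V_{i}/(u_{i}^{*})'$. Instead it fixes a hypothetical $0\neq\bar V\in\ker L\cap C_{\sigma_{1},\sigma_{2}}$ and shows that the set $S=\{r\in\mathbb{R}:|r\bar V(\xi)|\le (U^{*})'(\xi)\ \forall\xi\}$ is nonempty, closed, and open, hence all of $\mathbb{R}$, contradicting boundedness of $(U^{*})'$. Openness at $-\infty$ comes for free from $\bar V\in C_{\sigma_{1},\sigma_{2}}$ and $\sigma_{2}>\nu_{2}^{-}$, exactly as you observe. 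At $+\infty$, however, the paper bypasses your asymptotic-cancellation step entirely: since the diagonal of $\partial F/\partial U(U^{*}(+\infty))$ is strictly negative, one chooses a constant vector $P^{+}>0$ with $\partial F/\partial U(U^{*}(\xi))P^{+}<0$ for large $\xi$, and uses $\tau P^{+}$ as a barrier to rule out any negative dip of $(U^{*})'-\bar r\bar V$ on $[N,\infty)$ by a one-line minimum-point contradiction.

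Your proposed handling of the boundary case is where the comparison matters. After subtracting $\lambda^{*}(U^{*})'$, the function $\tilde V$ is no longer in $C_{\sigma_{1},\sigma_{2}}$ (it inherits the slow $\nu_{2}^{-}$-rate at $-\infty$), so you cannot simply ``repeat the sliding comparison'' under the same hypotheses; the $-\infty$ end now behaves differently, and you would need a separate argument there. One can likely push this through, but it requires tracking both ends simultaneously and is more delicate than you indicate. The paper's barrier $P^{+}$ is a cleaner device: it handles the $+\infty$ end uniformly without any mode-by-mode cancellation or iteration, and you may find it worth adopting in place of your cancellation step.
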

\begin{proof}
Let $U^{*}$ be a traveling wave solution of (\ref{eq: 4.1}) as obtained
in Theorem \ref{thm: 3.2}. It is easy to see that $(U^{*})'\in C_{0}$
and satisfies the equation \begin{equation}
LV=0.\label{eq: 4.24}\end{equation}
This shows that $0$ is an eigenvalue of the operator $L$
in $C_{0}$. Suppose that there exists a nonzero function $\bar{V} \in C_{\sigma_{1}\sigma_{2}}$ satisfying equation (4.24), we then claim that the ineqaulity $|r\bar{V}(\xi)|\leq(U^{*})'(\xi)$ is consequently true for all $r\in \mathbb{R}$ and all $\xi \in \mathbb{R}$. Writing
$$
S:=\{r\in\mathbb{R}|\,|r\bar{V}(\xi)|\leq(U^{*})'(\xi), \xi \in \mathbb{R}\},
$$
we verify the following properties:

1. $S$ is non-empty, since $0\in S$.

2. $S$ is closed. Let $r_{i}\in S$, $i=1,\,2,\,...$ and $r_{i}\rightarrow r$
as $i\rightarrow+\infty$, then we will have $|r_{i}\bar{V}(\xi)|\leq(U^{*})'(\xi)$
which implies that $|r\bar{V}(\xi)|\leq(U^{*})'(\xi)$, we therefore have
$r\in S$.

3. $S$ is open. Let $r\in S$, we will show that there exists a $\bar{\delta}>0$
such that $(r-\bar{\delta},\, r+\bar{\delta})\subset S$. We claim
that $|r\bar{V}(\xi)|\leq(U^{*})'(\xi)$ implies $|r\bar{V}(\xi)|<(U^{*})'(\xi)$.
In fact, let $W(\xi)=(U^{*})'(\xi)-r\bar{V}(\xi)$ then $W(\xi)\geq0$,
$\xi\in\mathbb{R}$ and satisfies the following equation:\begin{equation}
\left\{ \begin{array}{l}
w_{1}''-cw_{1}'+A_{11}w_{1}+A_{12}w_{2}=0,\\
w_{2}''-cw_{2}'+A_{21}w_{1}+A_{22}w_{2}=0,\\
(w_{1},w_{2})(-\infty)=(w_{1},w_{2})(+\infty)=0,\end{array}\right.\label{eq:4.25}\end{equation}
where $A_{ij},$ $i,j=1,2$ are the entries of the Jacobian $\frac{\partial F}{\partial U}(U^{*})$. Since $A_{12}\ge 0$ and $A_{21}\ge 0$,
the Maximum Principle implies that $W(\xi)=(w_{1}(\xi),w_{2}(\xi))^{T}>0$
for $\xi\in\mathbb{R}$, unless $W$ is identically $0$ and the Lemma is proved. We thus have $(U^{*})'(\xi)-r\bar{V}(\xi)>0,\:\xi\in\mathbb{R}$.
Similarly we can show that $(U^{*})'(\xi)+r\bar{V}(\xi)>0$ for $\xi\in\mathbb{R}$.
The claim then follows.

We next show that the claim further implies $|\bar{r}\bar{V}(\xi)|<(U^{*})'(\xi)$
as long as $\bar{r}$ is sufficiently close to $r$. According to condition (4.18) and the assumption that $\bar{V} \in C_{\sigma_{1}\sigma_{2}}$, for any fixed $\tilde{r} \in \mathbb{R}$, there exists $N>0$ sufficiently large such that $(e^{\sigma_{1}\xi}+e^{-\sigma_{2}\xi})[(U^{*})'(\xi)-\tilde{r}\bar{V}(\xi)]>0$ for all $\xi \in (-\infty, N]$. This implies that $\tilde{r}\bar{V}(\xi)< (U^{*})'(\xi)$ also holds there. Furthermore, due to the claim, and the boundedness of the functions $(U^{*})'$ and $\bar{V}$, we can find $\bar{\delta}>0$
such that for any $\bar{r} \in (-\bar{\delta}+r, \bar{\delta}+r)$,
we have $(U^{*})'(\xi)>\bar{r}\bar{V}(\xi)$ on the finite interval $[-N,N]$.
\par
 Now we fix $\tilde{r}=\bar{r}$ and show $[(U^{*})'(\xi)-\bar{r}\bar{V}(\xi)]>0$ for $\xi \in [N,+\infty)$.
Noting the diagonal entries of the matrix $\frac{\partial F}{\partial U}(U^{*}(+\infty)))$ are both negative, we can choose column vector $P_{+}>0$ such that (increasing $N$ if necessary) $\frac{\partial F}{\partial U}(U^{*}(+\infty)))P_{+}<0$ for $\xi \in [N,+\infty)$.
\par
We have to consider the following two cases:

Case A. If we already have $[(U^{*})'-\bar{r}\bar{V}(\xi)]\geq0$
for $\xi\ge N$, then the Maximum Principle implies that $[(U^{*})'(\xi)-\bar{r}\bar{V}(\xi)]>0$
on $[N,+\infty)$. Analogously, we have $[(U^{*})'(\xi)+\bar{r}\bar{V}(\xi)]>0$
is also true for $\xi\in\mathbb{R}$. Consequently, $S$ is open.

Case B. If there is a point in the interval $(N,+\infty)$ such that
one of the components of vector $(U^{*})'(\xi)-\bar{r}\bar{V}(\xi)$
takes negative local minimum at this point, we consider function $\tilde{W}(\xi):=(U^{*})'(\xi)-\bar{r}\bar{V}(\xi)+\tau P_{+}$.
The asymptotic rates of $(U^{*})'$ and $\bar{V}$ at $+\infty$ imply
that there is a sufficiently large $\tau>0$ such that $\tilde{W}(\xi)= (U^{*})'(\xi)-\bar{r}\bar{V}(\xi)+\tau P_{+}\geq0$
for $\xi\in(N,+\infty)$. We further assume that one of the components
of $\tilde{W}(\xi)$, say $\tilde{w_{1}}$ for example, takes minimum
at a finite point $\xi_{2}\,\mbox{in}\,(N,+\infty)$. It is not hard
to verify that there is a $\tau=\tau_{2}$ such that the corresponding
$\tilde{W}(\xi)$ staisfying $\tilde{w_{1}}(\xi_{2})=0$ and $\tilde{W}(\xi)\geq0$
for $\xi\in(N,+\infty)$. For such $\tau_{2}$ on the one hand, we
have\begin{equation}
\begin{array}{lll}
L\tilde{W} & = & \tilde{W}_{\xi\xi}-c^{*}\tilde{W}_{\xi}+\frac{\partial F}{\partial U}(U^{*})\tilde{W}\\
 & = & \tau_{2}\frac{\partial F}{\partial U}(U^{*}))P_{+}<0;\end{array}\label{eq:4.26}\end{equation}
on the other hand at $\xi=\xi_{2}$, the first component on the left hand side of (\ref{eq:4.26})
is larger than or equal to zero. We then have a contradiction, and consequently
$(U^{*})'(\xi)-\bar{r}\bar{V}(\xi)\geq0$ for $\xi\in(N,+\infty)$.
We are again in the situation descibed by case A. By a similar argument, we can show that $(U^{*})'(\xi)+\bar{r}\bar{V}(\xi) \ge 0$ for $\xi \in [N,\infty)$.
\par
In summary, both case A and Case B show that for any $\bar{r}\in(-\bar{\delta}+r,\,\bar{\delta}+r)$,
$|\bar{r}\bar{V}(\xi)|<(U^{*})'(\xi), \xi \in \mathbb{R}$, i.e., $S$ is open. 

Now the set $S$ is a non-empty, open and closed subset of $\mathbb{R}$,
hence $S\equiv\mathbb{R}$. However, this is impossible by the definition
of $S$, since $(U^{*})'(\xi)$ is bounded. Therefore the equation $LV=0$ cannot have a nontrivial solution in $C_{\sigma_{1}\sigma_{2}}$.

\end{proof}

The next lemma shows that there is no eigenvalue of the operator $L$
in $C_{\sigma_{1},\sigma_{2}}$ with positive real part. 

\begin{lem}
\label{lem:4.5} Let $C_{0}^{\mathbb{C}}$ be the complexified space
of $C_{0}(\mathbb{R})$ and $\lambda$ be an eigenvalue of the operator
$\tilde{\mathcal{L}}$, given by (4.16), with corresponding eigenfunction $\underline{U}\in C_{0}^{\mathbb{C}}$,
then $\mbox{Re}\:\lambda<0$.
\end{lem}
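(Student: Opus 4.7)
The approach is to extend the sliding comparison used in Lemma \ref{-Lemma-4.4.} from the kernel equation $LV=0$ to the eigenvalue equation $LV=\lambda V$. First translate back from $\tilde L$ on $C_0^{\mathbb C}$ to $L$ on $C_{\sigma_1,\sigma_2}^{\mathbb C}$ via the conjugation (\ref{eq: 4.16}): if $\tilde L \underline U = \lambda \underline U$ with $\underline U\in C_0^{\mathbb C}$, then $V:=T^{-1}\underline U\in C_{\sigma_1,\sigma_2}^{\mathbb C}$ solves $LV=\lambda V$, and $V\not\equiv 0$. Write $\lambda=\mu+i\nu$, $V=V_R+iV_I$, and assume for contradiction that $\mu\ge 0$; by Lemma \ref{-Lemma-4.4.} we may further assume $|\lambda|>0$. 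For each phase $\theta\in[0,2\pi]$, set $V_\theta:=\cos\theta\,V_R-\sin\theta\,V_I\in C_{\sigma_1,\sigma_2}$. These are real and satisfy the coupled relation $LV_\theta=\mu V_\theta-\nu V_{\theta-\pi/2}$, and componentwise $\max_\theta V_{\theta,i}(\xi)=|V_i(\xi)|$.

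Following the template of Lemma \ref{-Lemma-4.4.}, define
$$
S:=\{\,r\in\mathbb R : |r V_\theta(\xi)|\le (U^*)'(\xi)\text{ componentwise for all }\xi\in\mathbb R,\,\theta\in[0,2\pi]\,\},
$$
equivalently $\{\,r:|r|\,|V_i(\xi)|\le (U^*)'_i(\xi)\text{ for all }\xi,i\,\}$. Since $(U^*)'$ is bounded on $\mathbb R$ (Corollary \ref{cor:3.4}) while $V\not\equiv 0$, $S$ is bounded in $\mathbb R$. I will prove $S$ is nonempty, closed, and open, forcing $S=\mathbb R$ — the desired contradiction. Nonemptiness ($0\in S$) and closedness are immediate. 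Openness is the heart of the argument: fix $r\in S$ and let $\xi_0\in\mathbb R$, $i\in\{1,2\}$, $\theta_0$ be a touching configuration at which $rV_{\theta_0,i}(\xi_0)=(U^*)'_i(\xi_0)$ (otherwise $r$ may be enlarged directly using the uniform separation at $\pm\infty$ that the weight condition (\ref{eq:4.18}) produces, combined with the asymptotic rates from Corollary \ref{cor:3.4}, which make $V_\theta$ decay strictly faster than $(U^*)'$ at both infinities). The phase optimality $\partial_\theta V_{\theta,i}(\xi_0)=-V_{\theta_0-\pi/2,i}(\xi_0)=0$ at $\xi_0$ kills the $\nu$-coupling term in that component at the touching point.

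Set $W:=(U^*)'-rV_{\theta_0}$. Then $W\ge 0$ with $W_i(\xi_0)=0$ and
$$
LW=-r\mu V_{\theta_0}+r\nu V_{\theta_0-\pi/2},
$$
whose $i$-th entry at $\xi_0$ equals $-r\mu V_{\theta_0,i}(\xi_0)\le 0$ because $\mu\ge 0$ and $V_{\theta_0,i}(\xi_0)=(U^*)'_i(\xi_0)/r>0$. Since the off-diagonal entries of $\partial F/\partial U(U^*)$ are nonnegative (the cooperative structure of (\ref{eq:3.6})), the strong Maximum Principle applied to the $i$-th equation of $LW\le 0$ at the interior minimum $\xi_0$ forces $W_i\equiv 0$ on a neighborhood, and then global propagation through the cooperative coupling plus the asymptotic decay of $V_\theta$ relative to $(U^*)'$ yields $W\equiv 0$ on $\mathbb R$, or equivalently $rV_{\theta_0}\equiv (U^*)'$; but the left side lies in $C_{\sigma_1,\sigma_2}$ while the right side does not (since $(U^*)'$ decays at rate $e^{(c^*-\sqrt{(c^*)^2-4\alpha})\xi/2}$ at $-\infty$ by Corollary \ref{cor:3.4}, slower than $e^{\sigma_2\xi}$), a contradiction. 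Hence $W>0$ strictly on $\mathbb R$ and a small neighborhood $(r-\bar\delta,r+\bar\delta)\subset S$, proving openness. The main obstacle is precisely the phase-coupling term $-\nu V_{\theta-\pi/2}$: the argument requires simultaneous sliding in $r$ and optimizing over $\theta$, and depends essentially on the cooperative structure and the sharp exponential decay information of Corollary \ref{cor:3.4} to control the sign of the source term uniformly.
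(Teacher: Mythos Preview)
Your plan follows the template of Lemma \ref{-Lemma-4.4.}, but the passage from the kernel equation to the eigenvalue equation introduces the imaginary-part coupling $-\nu V_{\theta-\pi/2}$, and your handling of this term has a genuine gap. At the touching point $(\xi_0,\theta_0)$ you correctly observe that phase-optimality kills $V_{\theta_0-\pi/2,i}(\xi_0)$, so that pointwise $(LW)_i(\xi_0)=-\mu(U^*)'_i(\xi_0)\le 0$. But the elliptic strong maximum principle you invoke requires a differential inequality in a \emph{neighborhood} of $\xi_0$, not just at the single point; away from $\xi_0$ the term $r\nu V_{\theta_0-\pi/2,i}(\xi)$ has no sign, so you cannot conclude $W_i\equiv 0$ from a pointwise second-order condition. (Indeed, the pointwise balance forces $\mu=0$, hence $\nu\neq 0$, and then $LW=r\nu V_{\theta_0-\pi/2}$ genuinely changes sign.) A second problem is your claimed strict separation at $+\infty$: under \eqref{eq:4.18} one has $\sigma_1<\frac{-c^*+\sqrt{(c^*)^2+4(b-\epsilon_1)}}{2}$, so $V_\theta=o(e^{-\sigma_1\xi})$ decays \emph{slower} than $(U^*)'(\xi)\sim e^{(c^*-\sqrt{(c^*)^2+4(b-\epsilon_1)})\xi/2}$; thus the ``uniform separation at $+\infty$'' you rely on to enlarge $r$ in the non-touching case does not hold.

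The paper circumvents both obstacles by switching to a parabolic formulation: it sets $V(\xi,t)=U^1\cos(\lambda_2 t)-U^2\sin(\lambda_2 t)$, which is exactly your family $V_\theta$ with $\theta=\lambda_2 t$, and observes that this time-periodic function satisfies the \emph{real} parabolic equation $V_t=\tilde{\mathcal L}V-\lambda_1 V$ with no cross term. The comparison is then made in $C_0$ against $r\,\mathcal T(U^*)'$, which is unbounded as $\xi\to-\infty$ (giving automatic domination there), while at $+\infty$ an auxiliary exponential barrier $Q^+(\xi)=e^{\bar\epsilon\xi}\tilde P^+$ is introduced. The parabolic strong maximum principle together with $t$-periodicity then forces the infimum of admissible $r$ to be zero, yielding the contradiction. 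In short, the rotation in $\theta$ that you try to handle by an extremality argument is absorbed cleanly into the time variable, and the missing decay at $+\infty$ is repaired by an explicit barrier rather than by a comparison with $(U^*)'$.
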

\begin{proof}
Let the eigenvalue $\lambda=\lambda_{1}+\lambda_{2}i$ and eigenfunction $\underline{U}(\xi)=U^{1}(\xi)+iU^{2}(\xi)$
for $\xi\in\mathbb{R}$, where $\lambda_{i}\in\mathbb{R}$ and $U^{i}(\xi)\in C_{0}(\mathbb{R})$.

Consider the Cauchy problem (\cite{BatesChen}, \cite{XuZhao}): 

\begin{equation}
V_{t}=\tilde{\mathcal{L}}V-\lambda_{1}V,\quad V(\xi,0)=U^{1}(\xi).\label{eq:4.27}\end{equation}
It is easy to verify that $V(\xi,t)=U^{1}(\xi)\cos(\lambda_{2}t)-U^{2}(\xi)\sin(\lambda_{2}t)$
solves \eqref{eq:4.27} for $\xi\in\mathbb{R}$ and $t\geq0$ and
is bounded. We suppose that at least one of the components of $V$ assumes
positive value at some $\xi$ and $t$ (we can consider $-V$ if otherwise).
\par
Suppose $\lambda_{1}\ge 0$, then the following claim is true.

\textit{Claim}: \textit{There exists a $r>0$ such that $V(\xi,t)\leq r \mathcal{T}(U^{*})'(\xi)$ 
for $\xi\in\mathbb{R}$ and $t\geq0$.} (Recall the operator $\mathcal{T}$ is defined in (4.15).)

In fact since the vector $\mathcal{T}(U^{*})'(\xi)\rightarrow+\infty$
as $\xi\rightarrow-\infty$ we can choose a sufficiently large $\xi_{0}>0$
such that 

\begin{equation}
V(\xi,t)<\mathcal{T}(U^{*})'(\xi)\;\mbox{for}\:\xi\leq-\xi_{0}\:\mbox{and}\: t\geq0.\label{eq:4.28}\end{equation}
Furthermore the positivity of $\mathcal{T}(U^{*})'(\xi)$ for $\xi\in\mathbb{R}$
implies that there is a $r>0$ such that 

\begin{equation}
V(\xi,t)\leq r\mathcal{T}(U^{*})'(\xi)\quad\mbox{for}\:\xi\in[-\xi_{0},\xi_{0}]\:\mbox{and}\: t\geq0.\label{eq:4.29}\end{equation}

Let $\bar{r}=\max\{1,r\}$, we then have 

\begin{equation}
V(\xi,t)\leq\bar{r}\mathcal{T}(U^{*})'(\xi)\label{eq:4.30}\end{equation}
for $\xi\leq\xi_{0}$ and $t\geq0$. We next adjust $\bar{r}$ suitably 
such that an equality in \eqref{eq:4.30} holds on at least one component
at a point $(\xi_{1},t_{1})$ with $\xi_{1}\in(-\infty,\xi_{0}]$
and $t_{1}\geq0$. 

We proceed to show that the assumption $\lambda_{1} \ge 0$ implies that \eqref{eq:4.30} is also true
for $\xi\geq\xi_{0}$ and $t\geq0.$

From the limits of $g_{1}, g_{2}$, the choice of $\sigma_{1}, \sigma_{2}$, and (4.21), we can find a vector $\tilde{P}^{+}>0$ and increasing $\xi_{0}$
if necessary such that
\begin{equation}
M(\xi)\tilde{P}^{+}<0\quad\mbox{for}\:\xi\geq\xi_{0},\label{eq:4.31}\end{equation}
where
$$
M(\xi):= (2g_{1}^{2}(\xi)-g_{2}(\xi)+ c^{*}g_{1}(\xi))I + \frac{\partial F}{\partial U}(U^{*}(\xi)).
$$
 We can also choose a small $\bar{\epsilon}>0$ such that 

\begin{equation}
(\bar{\epsilon}^{2}\begin{array}{c}
\left(\begin{array}{c}
1\\
1\end{array}\right)\end{array}-\bar{\epsilon}(2g_{1}+c^{*})I+M(\xi))\tilde{P}^{+}<0\quad\mbox{for}\:\xi\geq\xi_{0},\label{eq:4.32}\end{equation}

Now suppose that we can find a $\xi_{1}>\xi_{0}$ and a $t_{1}\geq0$
such that $V(\xi_{1},t_{1})>\bar{r}\mathcal{T}(U^{*})'(\xi_{1})$.
Let
$$
Q^{+}(\xi):=e^{\bar{\epsilon}\xi}\tilde{P}^{+}.
$$
Since $Q^{+}(\xi)\rightarrow+\infty$ as
$\xi\rightarrow+\infty$, there is a $\tilde{r}>0$ such that $V(\xi,t)\leq\bar{r}\mathcal{T}(U^{*})'(\xi)+\tilde{r}Q^{+}(\xi)$
for all $\xi\geq\xi_{0}$ and $t\geq0$, and for at least one index
$j$ and a $\xi_{2}\geq\xi_{0}$ and a $t_{2}>0$, we have the equality
for the $j-th$ component:

\[
V_{j}(\xi_{2},t_{2})=\bar{r}\mathcal{T}(U_{j}^{*})'(\xi_{2})+\tilde{r}Q_{j}^{+}(\xi_{2}).\]
Let $Y(\xi,t)=\bar{r}\mathcal{T}(U^{*})'(\xi)+\tilde{r}Q^{+}(\xi)-V(\xi,t)$\textbf{,
}then $Y_{j}$ has the following properties: $Y_{j}(\xi_{2},t_{2})=0,$
$Y_{j}(\xi_{0},t)>0$, $Y_{j}(\xi,t)\geq0$ for all $\xi\geq\xi_{0}$,
$t\geq0$ and it then follows that $Y_{j,t}(\xi_{2},t_{2})= 0$,
$Y_{j,\xi}(\xi_{2},t_{2})=0$ and $Y_{j,\xi\xi}(\xi_{2},t_{2})\geq0$
and that $Y_{j}(\xi,t)$ satisfies

\begin{equation}
\begin{array}{lll}
Y_{j,t} & = & -V_{j,t}\\
 & = & -(\tilde{\mathcal{L}}V-\lambda_{1}V)_{j}\\
 & > & (-\tilde{\mathcal{L}}V+\lambda_{1}V+\tilde{\mathcal{L}}\bar{r}\mathcal{T}(U^{*})'+\tilde{\mathcal{L}}\tilde{r}Q^{+}-\lambda_{1}(\bar{r}\mathcal{T}(U^{*})'+\tilde{r}Q^{+}))_{j}\\
 & = & (\tilde{\mathcal{L}Y}-\lambda_{1}Y)_{j}\\
 & = & Y_{j,\xi\xi}-(2g{1}+c^{*})Y_{j,\xi}+M_{1j}(U^{*})Y_{1}+M_{2j}(U^{*})Y_{2}-\lambda_{1}Y_{j}.\end{array}\label{eq:4.33}\end{equation}
Note that in the third line above we have $\tilde{\mathcal{L}}\bar{r}\mathcal{T}(U^{*})'=0$,\,$\tilde{\mathcal{L}}\tilde{r}Q^{+}<0$ by (4.32),\,$\lambda_{1}\ge 0$, $\bar{r}\mathcal{T}(U^{*})'>0$ and $\tilde{r}Q^{+}>0$. In the last line of (4.33), $M_{ij}$ denotes the ij-th entry of the matrix $M$ in (4.31).
However, at $(\xi_{2},t_{2})$ the left hand side of \eqref{eq:4.33}
is equal to zero, while the right hand side is greater than or equal to zero because the off diagonal entries of $M$ is nonnegative. We have a contradiction and consequently the claim is proved.

We have thus established the fact that the set
$$
S=\{r\geq0|V(\xi,t)\leq r\mathcal{T}(U^{*})'(\xi)\;\mbox{for}\;\xi\in\mathbb{R}\}
$$
is non-empty. Let $r_{0}$ denotes the greatest lower bound of $S$. In what follows we will
show $r_{0}=0$. Suppose $r_{0}>0$, we have \begin{equation}
V(\xi,t)\leq r_{0}\mathcal{T}(U^{*})'(\xi)\;\mbox{for}\;\xi\in\mathbb{R}.\label{eq:4.34}\end{equation}
We first assume that an equality occurs at a point in \eqref{eq:4.34}
at the $i-th$ component at a point $(\tilde{\xi},\tilde{t})$ with
$\tilde{\xi}\in\mathbb{R}$ and $\tilde{t}\geq0$. Let
$$
X(\xi,t):=r_{0}\mathcal{T}(U^{*})'(\xi)-V(\xi,t).
$$
From (4.27), we obtain the follwing inequality

\[
\begin{array}{lll}
X_{i,t} & \geq & (\tilde{\mathcal{L}}X-\lambda_{1}X)_{i}\\
 & = & X_{i,\xi\xi}-(2g_{i}+c^{*})X_{i,\xi}+M_{1i}(U^{*})X_{1}+M_{2i}(U^{*})X_{2}-\lambda_{1}X_{i}\\
 & \ge & X_{i,\xi\xi} -(2g_{i}+c^{*})X_{i,\xi}+M_{ii}(U^{*})X_{i}-\lambda_{1}X_{i}
\end{array}\]
The last inequality is true because $M_{ij}(U^{*})\ge 0$ if $i\neq j$.
 From the
positivity theorem for parabolic equations we deduce that $X_{i}(\xi,t)>0$
for $\xi\in\mathbb{R}$ and $t>\tilde{t}$ (cf p.14 in \cite{Leung}). However by the $t-$periodicity
of $V$, we have that $X_{i}(\xi,t)>0$ for all $\xi\in\mathbb{R}$
and $t>0$. Contradiction with the existence of $\tilde{\xi}$. This
shows that 

\begin{equation}
V(\xi,t)<r_{0}\mathcal{T}(U^{*})'(\xi)\;\mbox{for}\;\xi\in\mathbb{R},\: t\geq0.\label{eq:4.35}\end{equation}

Again since $\mathcal{T}(U^{*})'(\xi)\rightarrow+\infty$ monotonically
as $\xi\rightarrow-\infty$, there exist a sufficiently small $\delta_{1}>0$
and a large $\bar{M}>0$ such that \begin{equation}
V(\xi,t)<(r_{0}-\delta_{1})\mathcal{T}(U^{*})'(\xi)\quad\mbox{for }\xi\leq-\bar{M}\;\mbox{and}\; t\geq0.\label{eq:4.36}\end{equation}
The positivity of $\mathcal{T}(U^{*})'(\xi)$ implies that we can
extend inequality \eqref{eq:4.36} to the interval $(-\infty,\bar{M})$
with an even smaller $\delta>0$. We have 

\[
V(\xi,t)<(r_{0}-\delta)\mathcal{T}(U^{*})'(\xi)\;\mbox{for}\;\xi\in(-\infty,\bar{M)\;}\mbox{and}\; t\geq0.\]

We are then in the same situation as in the proof of the claim at the beginning of this lemma. Using similar arguments as in the proof of the claim, we extend the above inequality to: 

\[
V(\xi,t)\ge (r_{0}-\delta)\mathcal{T}(U^{*})'(\xi)\;\mbox{for}\;\xi\in\mathbb{R},\: t>0.\]
It then follows that $r_{0}-\delta\in S.$ Contradiction with the
definition of $r_{0}$. Hence $r_{0}=0$. However, this contradicts the assumption that at least one component of $V$ assume positive value. Thus we must have $\lambda_{1} <0$. This concludes the proof of the lemma. 
\end{proof}
\begin{thm}
\label{thm:4.7}Assume [H1] to [H3] and that $\sigma_{1}$ and $\sigma_{2}$ satisfy
\eqref{eq:4.18}, the operator $L$ in $C_{\sigma_{1},\sigma_{2}}$
has a dense domain of definition. For any complext number with $Re\,\lambda>0$ large enough,
$(\lambda-L)^{-1}$exists and is defined on all of $C_{\sigma_{1},\sigma_{2}}$,
and satisfies the following estimate \begin{equation}
\left\Vert (\lambda-L)^{-1}\right\Vert _{C_{\sigma_{1},\sigma_{2}}}\leq\frac{\bar{c}}{1+|\lambda|},\label{eq:4.37}\end{equation}
 where $\bar{c}>0$ is a constant. 
\end{thm}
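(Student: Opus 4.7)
The plan is to reduce the problem to a standard sectorial operator argument, which will yield the resolvent estimate as a direct consequence of analytic semigroup theory. The key observation is that, via the conjugation $\tilde{L}=TLT^{-1}$ introduced in \eqref{eq: 4.16}, studying $L$ acting in $C_{\sigma_{1},\sigma_{2}}$ is equivalent to studying $\tilde{L}$ acting in $C_{0}(\mathbb{R})$, where $\tilde{L}$ is an ordinary second-order operator on $\mathbb{R}$ with coefficients that are bounded on $\mathbb{R}$ (since $g_{1}(\xi)$, $g_{2}(\xi)$ are bounded with finite limits at $\pm\infty$ and $\frac{\partial F}{\partial U}(U^{*})$ is bounded by Corollary 3.4). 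Thus it suffices to prove the resolvent bound $\|(\lambda-\tilde{L})^{-1}\|_{C_{0}}\le \bar{c}/(1+|\lambda|)$ for $\mathrm{Re}\,\lambda>0$ large.

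First I would verify density of the domain: the natural domain of $\tilde{L}$ on $C_{0}(\mathbb{R})$ is $\{V\in C_{0}\cap C^{2}(\mathbb{R}):V'',V'\in C_{0}\}$, which contains $C_{c}^{\infty}(\mathbb{R};\mathbb{R}^{2})$, a dense subset of $C_{0}(\mathbb{R})$. Pulling back by $T^{-1}$, the domain of $L$ in $C_{\sigma_{1},\sigma_{2}}$ is likewise dense. Next I would split $\tilde{L}=\tilde{L}_{0}+B$, where $\tilde{L}_{0}V:=V_{\xi\xi}$ is the pure Laplacian (componentwise) and $B$ contains the first-order drift $-(2g_{1}+c^{*})V_{\xi}$ together with the bounded zeroth-order multiplication $(2g_{1}^{2}-g_{2}+c^{*}g_{1})V+\frac{\partial F}{\partial U}(U^{*})V$. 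It is classical (see e.g.\ Henry \cite{Henry}) that $\tilde{L}_{0}$ is sectorial on $C_{0}(\mathbb{R})$: its spectrum lies in $(-\infty,0]$, and for $\mathrm{Re}\,\lambda>0$ one has the explicit Green-function bound
\begin{equation*}
\|(\lambda-\tilde{L}_{0})^{-1}\|_{C_{0}}\le \frac{M}{|\lambda|}.
\end{equation*}

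For the perturbation $B$, the interpolation inequality $\|V_{\xi}\|_{\infty}\le \varepsilon \|V_{\xi\xi}\|_{\infty}+C(\varepsilon)\|V\|_{\infty}$, valid for any $\varepsilon>0$ on $\mathbb{R}$, shows that the drift term is $\tilde{L}_{0}$-bounded with relative bound $0$; the multiplication piece is globally bounded. The standard perturbation theorem for sectorial operators (Theorem 1.3.2 in \cite{Henry}) then gives that $\tilde{L}$ is sectorial on $C_{0}(\mathbb{R})$, so there exist constants $a\in\mathbb{R}$, $\theta\in(0,\pi/2)$ and $\bar{c}>0$ such that for every $\lambda$ with $|\arg(\lambda-a)|<\pi-\theta$, the resolvent $(\lambda-\tilde{L})^{-1}$ exists on all of $C_{0}(\mathbb{R})$ with $\|(\lambda-\tilde{L})^{-1}\|_{C_{0}}\le \bar{c}/|\lambda-a|$. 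In particular, for $\mathrm{Re}\,\lambda>a$ sufficiently large one has $\|(\lambda-\tilde{L})^{-1}\|_{C_{0}}\le \bar{c}/(1+|\lambda|)$. Since $T:C_{\sigma_{1},\sigma_{2}}\to C_{0}$ is a bounded isomorphism, the identity $(\lambda-L)^{-1}=T^{-1}(\lambda-\tilde{L})^{-1}T$ transfers this bound to $C_{\sigma_{1},\sigma_{2}}$ with a possibly readjusted constant $\bar{c}$, yielding \eqref{eq:4.37}.

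The step I expect to require the most care is the verification that the first-order drift $-(2g_{1}(\xi)+c^{*})V_{\xi}$ really is $\tilde{L}_{0}$-bounded with relative bound zero in the sup-norm setting (as opposed to the $L^{2}$ setting where the interpolation is standard), and that the constants in the perturbation argument can indeed be arranged so that the sector of sectoriality contains a right half-plane $\{\mathrm{Re}\,\lambda>a\}$ with $a>0$ large, delivering the precise form $\bar{c}/(1+|\lambda|)$. Once that bookkeeping is done, the density of the domain and the existence/analyticity of the resolvent follow automatically from the sectorial framework.
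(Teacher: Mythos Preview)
Your proposal is correct and, in fact, considerably more detailed than the paper's own proof, which simply reads: ``The proof follows the same idea as in \cite{Volpert} but with resolvent estimates in $C_{0,\tau}$ replaced by in the space $C_{\sigma_{1},\sigma_{2}}$. We skip the proof.'' Both routes amount to establishing sectoriality of a second-order operator with bounded coefficients on a space of continuous functions; you do this explicitly via the conjugation $\tilde{L}=TLT^{-1}$ (which the paper already sets up in \eqref{eq:4.15}--\eqref{eq: 4.17}) together with the relatively-bounded perturbation theorem from \cite{Henry}, whereas the paper defers entirely to the machinery in \cite{Volpert}. Your concern about the sup-norm interpolation inequality is unfounded: Landau's inequality $\|V_\xi\|_\infty^2\le C\|V\|_\infty\|V_{\xi\xi}\|_\infty$ on $\mathbb{R}$ gives precisely the relative bound zero you need, and the passage from $\bar c/|\lambda-a|$ to $\bar c/(1+|\lambda|)$ for $\mathrm{Re}\,\lambda$ large is elementary.
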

\noindent \vspace{5pt}

\begin{proof}
The proof follows the same idea as in \cite{Volpert} but with resolvent
estimates in $C_{0,\tau}$ replaced by in the space $C_{\sigma_{1},\sigma_{2}}$.
We skip the proof. 
\end{proof}
\begin{thm}
\label{thm:4.7}Under the hypotheses of Theorem \ref{thm:4.7} the
operator $L$ generates an analytical semigroup in $C_{\sigma_{1},\sigma_{2}}$,
where $\sigma_{1}$ and $\sigma_{2}$ satisfy \eqref{eq:4.18}. 
\end{thm}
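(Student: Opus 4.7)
The plan is to verify that $L$ satisfies the sectoriality criterion stated in Theorem 1.3.4 of \cite{Henry}, whereupon the generation of an analytic semigroup follows at once from that theorem.

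First, I would combine the spectral information already established. Corollary \ref{cor:4.3} places the essential spectrum of $L$ in a closed sector $\Sigma_{\rm ess}$ in the left half plane with vertex at some point $-a_0<0$ on the real axis, and asserts that outside $\Sigma_{\rm ess}$ only finitely many eigenvalues $\{\lambda_1,\dots,\lambda_N\}$ of $L$ may occur. Lemma \ref{-Lemma-4.4.} and Lemma \ref{lem:4.5} together show that every eigenvalue of $L$ in $C_{\sigma_1,\sigma_2}$ satisfies $\mathrm{Re}\,\lambda<0$ (Lemma \ref{lem:4.5} excludes $\mathrm{Re}\,\lambda\ge 0$ except possibly $\lambda=0$, and Lemma \ref{-Lemma-4.4.} excludes $\lambda=0$). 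Hence one can enlarge $\Sigma_{\rm ess}$ to a slightly larger closed sector $\Sigma\subset\{\mathrm{Re}\,\lambda<0\}$ with vertex at some $-\tilde a<0$ on the real axis, such that $\sigma(L)\subset\Sigma$.

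Second, I would upgrade the resolvent estimate of the previous theorem, currently valid only for $\mathrm{Re}\,\lambda$ sufficiently large, to an estimate of the form
\begin{equation}
\bigl\|(\lambda-L)^{-1}\bigr\|_{C_{\sigma_1,\sigma_2}}\le\frac{M}{1+|\lambda+\tilde a|}, \qquad \lambda\in\mathbb{C}\setminus\Sigma,
\label{eq:sect}
\end{equation}
for some constant $M>0$. This is a standard perturbation/analytic continuation argument: starting from any $\lambda_0$ with large real part where the bound $\|(\lambda_0-L)^{-1}\|\le\bar c/(1+|\lambda_0|)$ holds, the Neumann series identity $(\lambda-L)^{-1}=(\lambda_0-L)^{-1}[I-(\lambda_0-\lambda)(\lambda_0-L)^{-1}]^{-1}$ extends the resolvent to a disk, and iterating along rays one obtains \eqref{eq:sect} on the complement of any open neighborhood of $\Sigma$, exactly as in Chapter 5 of \cite{Volpert} or Section 1.3 of \cite{Henry}.

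Third, armed with \eqref{eq:sect} together with the dense domain already recorded in the preceding theorem, I can invoke the sectoriality criterion directly: by Definition 1.3.1 and Theorem 1.3.4 of \cite{Henry}, $L$ is sectorial in $C_{\sigma_1,\sigma_2}$ and therefore generates an analytic semigroup $\{e^{tL}\}_{t\ge0}$ on that space.

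The main obstacle is step two, namely extending the resolvent bound from a right half-plane to the full complement of the spectral sector. The finitely many isolated eigenvalues $\lambda_1,\dots,\lambda_N$ lying outside $\Sigma_{\rm ess}$ but in $\{\mathrm{Re}\,\lambda<0\}$ produce poles of the resolvent that must be absorbed into the enlarged sector $\Sigma$ without destroying the uniform bound; this is handled by choosing $\Sigma$ wide enough to contain them and then using the finiteness (so that the distance from $\mathbb{C}\setminus\Sigma$ to $\sigma(L)$ remains strictly positive) to obtain a possibly larger but still finite constant $M$ in \eqref{eq:sect}. The remaining assertions are essentially bookkeeping once the resolvent bound on the complement of a sector is in hand.
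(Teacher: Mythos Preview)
Your argument is correct, but it does considerably more work than the paper's proof, which consists of a single sentence: the conclusion follows from the previous theorem and the Hille--Yosida theorem.  The point is that the resolvent bound $\|(\lambda-L)^{-1}\|_{C_{\sigma_1,\sigma_2}}\le \bar c/(1+|\lambda|)$ for $\mathrm{Re}\,\lambda$ large, together with the dense domain, already suffices (via the standard sectorial-operator version of Hille--Yosida, as in \cite{Henry} or \cite{Volpert}) to conclude that $L$ generates an analytic semigroup.  No information about the location of the spectrum in the \emph{left} half plane is needed at this stage.

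Your route differs in that you first invoke Corollary~\ref{cor:4.3}, Lemma~\ref{-Lemma-4.4.} and Lemma~\ref{lem:4.5} to confine $\sigma(L)$ to a sector strictly inside $\{\mathrm{Re}\,\lambda<0\}$, and then extend the resolvent estimate to the complement of that sector.  This is a legitimate path to the same conclusion, but it front-loads work that the paper postpones to Theorem~\ref{thm:4.8}: in the paper, the spectral results you cite are used only to obtain the exponential \emph{decay} of the semigroup, not its mere existence as an analytic semigroup.  Your approach has the advantage of yielding the sharper sectorial estimate \eqref{eq:sect} in one stroke, at the cost of importing three lemmas that are, strictly speaking, unnecessary for the statement at hand.
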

\begin{proof}
The conclusion follows from Theorem \eqref{thm:4.7}and Hille-Yoshida
Theorem. 
\end{proof}
We now state the stability theorem, 

\begin{thm}
\label{thm:4.8}Assume hypotheses [H1] to [H3], and $\sigma_{1}, \sigma_{2}$ satisfy (4.18). The
traveling wave solution $U^{*}$ of \eqref{eq:3.6}-\eqref{eq:3.7},
with wave speed $c^{*}>2\sqrt{\alpha}$ , is asymptotically stable
according to norm $||\cdot||:=||\cdot||_{C_{\sigma_{1},\sigma_{2}}}$.
That is, there exists $\epsilon>0$ such that if the initial condition
$U(\xi,0)=\bar{U}(\xi)\in C$ with $(\bar{U}(\xi)-U^{*}(\xi))\in C_{\sigma_{1},\sigma_{2}}$
and $||\bar{U}-U^{*}||<\epsilon$, then the solution $U(\xi,t)$ exists
uniquely for all $t>0$ and  \begin{equation}
||U(\xi,t)-U^{*}(x+ct)||\leq Me^{-bt}.\label{eq:4.43}\end{equation}
Here, the constants $M>0,\, b>0$ are independent of $t$ and $\bar{U}$. 
\end{thm}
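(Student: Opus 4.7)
The plan is to reduce Theorem \ref{thm:4.8} to a standard nonlinear stability result for a sectorial operator by combining all the spectral information already proved for $L$ on $C_{\sigma_1,\sigma_2}$. The strategy follows the classical analytic semigroup framework of \cite{Henry}: establish a uniform exponential decay of the linearized semigroup, verify that the Taylor remainder $\mathcal{N}(V,U^*)$ is genuinely quadratic in the weighted norm, then close a mild-formulation fixed-point argument.

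First, I would assemble the spectral gap. Corollary \ref{cor:4.3} places $\sigma_e(L)$ inside a closed sector lying strictly to the left of the imaginary axis, with only finitely many eigenvalues outside this sector. Lemma \ref{-Lemma-4.4.} rules out $0$ as an eigenvalue and Lemma \ref{lem:4.5} shows that every eigenvalue of $L$ has strictly negative real part. Since there are only finitely many such exceptional eigenvalues, I can choose $\mu>0$ with
\[
\sigma(L)\subset\{\lambda\in\mathbb{C}:\mathrm{Re}\,\lambda\le-\mu\}.
\]
Theorem \ref{thm:4.7} then shows $L$ is sectorial and generates an analytic semigroup, and the standard spectral mapping theorem for such semigroups yields a constant $M_{0}>0$ such that
\[
\|e^{Lt}\|_{C_{\sigma_1,\sigma_2}\to C_{\sigma_1,\sigma_2}}\le M_{0}e^{-\mu t},\qquad t\ge0.
\]

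Second, I would quantify the nonlinearity. The map $F$ in \eqref{eq: 4.1} is a polynomial of degree $2$ in its components, so $\mathcal{N}(V,U^*)=F(U^*+V)-F(U^*)-\tfrac{\partial F}{\partial U}(U^*)V$ is a homogeneous quadratic polynomial in $V$ with bounded coefficients. Because $\sigma_2>0$ (and $\sigma_1\ge0$), the weight $w(\xi)=e^{\sigma_1\xi}+e^{-\sigma_2\xi}$ is bounded below by a positive constant, so $\|V\|_{C_0}\le C\|V\|_{C_{\sigma_1,\sigma_2}}$. Distributing only one weight factor onto one copy of $V$, one obtains
\[
\bigl|w(\xi)\,\mathcal{N}(V,U^*)(\xi)\bigr|\le C_{1}\,|V(\xi)|\cdot\bigl|w(\xi)V(\xi)\bigr|\le C_{2}\|V\|_{C_{\sigma_1,\sigma_2}}^{2},
\]
which gives the quadratic Lipschitz-type estimate $\|\mathcal{N}(V_1)-\mathcal{N}(V_2)\|_{C_{\sigma_1,\sigma_2}}\le C(\|V_1\|+\|V_2\|)\|V_1-V_2\|$ on bounded sets of $C_{\sigma_1,\sigma_2}$.

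Third, I would run the mild-solution contraction argument. Writing \eqref{eq: 4.2} in Duhamel form
\[
V(t)=e^{Lt}V(0)+\int_{0}^{t}e^{L(t-s)}\mathcal{N}(V(s),U^*)\,ds,
\]
fixing any $b\in(0,\mu)$, and working in the weighted time space $\{V\in C([0,\infty);C_{\sigma_1,\sigma_2}):\sup_{t\ge0}e^{bt}\|V(t)\|<\infty\}$, the semigroup bound and the quadratic estimate give the a priori inequality
\[
e^{bt}\|V(t)\|\le M_{0}\|V(0)\|+C_{3}\!\int_{0}^{t}\!e^{-(\mu-b)(t-s)}\,(e^{bs}\|V(s)\|)^{2}e^{-bs}\,ds.
\]
A standard continuity (or Banach fixed-point) argument then yields global existence and the estimate $\|V(t)\|_{C_{\sigma_1,\sigma_2}}\le M e^{-bt}\|V(0)\|_{C_{\sigma_1,\sigma_2}}$ whenever $\|V(0)\|<\epsilon$, for suitable $M,\epsilon>0$; reverting to $V=U-U^{*}$ gives \eqref{eq:4.43}.

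The main obstacle is the first step: securing the uniform spectral gap $\mu>0$. The essential spectrum sector of Lemma \ref{-Lemma-4.2} is not by itself enough because the finitely many eigenvalues outside it could in principle accumulate on the imaginary axis from the left; ruling this out requires Lemmas \ref{-Lemma-4.4.} and \ref{lem:4.5} together with the finiteness statement of Corollary \ref{cor:4.3}. Once the uniform gap is in hand, the remaining steps are routine analytic semigroup technology.
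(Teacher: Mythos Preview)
Your proposal is correct and follows essentially the same route as the paper: combine Corollary~\ref{cor:4.3}, Lemma~\ref{-Lemma-4.4.}, and Lemma~\ref{lem:4.5} to obtain a uniform spectral gap for $L$ on $C_{\sigma_1,\sigma_2}$, then invoke the analytic semigroup machinery (Theorem~\ref{thm:4.7}) to deduce nonlinear exponential stability. The paper's own proof is terser---it simply assembles the same spectral ingredients and then cites Theorem~2.1 on p.~227 of \cite{Volpert} rather than writing out the Duhamel contraction---but the content is the same; your explicit quadratic estimate on $\mathcal{N}$ and the mild-formulation fixed-point argument are precisely what that cited theorem packages.
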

\begin{proof}
The stability of $U^{*}$ leads to the consideration of the stability of the trivial solution for system (4.2), and the analysis of the spectrum of the operator $L$ in (4.3). Corollary \ref{cor:4.3}, Lemma 4.4 and Lemma \ref{lem:4.5}
show that the spectrum of the operator $L$ in the space $C_{\sigma_{1},\sigma_{2}}$
is contained in a closed angular region in the left open complex plane. Thus, following the methods in Theorem 2.1 on p.227 in \cite{Volpert}, we obtain the conclusion of this theorem.
\end{proof}

\end{document}